\newcommand{\R}{\mathbb{R}}
\newcommand{\N}{\mathbb{N}}
\newcommand{\Sph}{\mathbb{S}}
\newcommand{\eps}{\varepsilon} 
\newcommand{\me}{\mathrm{e}}
\newcommand{\dif}{\mathrm{d}}
\newcommand{\vol}{\mathrm{Vol}}
\newcommand{\coan}{C^{0,\alpha}_{\nu - 2}}
\newcommand{\coann}{C^{0,\alpha}_{\nu - 2, \bar \nu}}
\newcommand{\ctan}{C^{2,\alpha}_\nu}
\newcommand{\ctann}{C^{2,\alpha}_{\nu, \bar \nu}}
\newcommand{\riem}{\mathrm{Rm}}
\newcommand{\ric}{\mathrm{Ric}}
\newcommand{\apsol}{\tilde \Sigma_r(\sigma, \delta)}
\newcommand{\apsolf}{\tilde \Sigma_r^{\mathit{F}}(\sigma, \delta)}
\newcommand{\apsoloe}{\tilde \Sigma_r^{\mathit{OE}}(\sigma, \delta)}
\DeclareMathOperator{\arccosh}{arccosh}
\newtheorem{thm}{Theorem}
\newtheorem{lemma}[thm]{Lemma}
\newtheorem{prop}[thm]{Proposition}
\newtheorem*{nonumthm}{Theorem}
\theoremstyle{definition}
\newtheorem{defn}[thm]{Definition}
\newtheoremstyle{rmk}{5pt}{5pt}{}{}{\scshape}{:}{.5em}{}
\theoremstyle{rmk}
\newtheorem*{rmk}{Remark}
\newcommand{\mylabel}
	{\label}
\begin{document}

\title{CMC Hypersurfaces Condensing to \\ Geodesic Segments and Rays \\
in Riemannian Manifolds} 

\author{Adrian Butscher \thanks{butscher@math.stanford.edu, Department of Mathematics, Stanford University, Stanford, CA 94305} \\ Stanford University \and Rafe Mazzeo \thanks{mazzeo@math.stanford.edu,  Same mail address. Supported by the NSF Grant DMS-0805529} \\ Stanford University}

\maketitle

\begin{abstract}
We construct examples of compact and one-ended constant mean curvature surfaces with large mean curvature in Riemannian manifolds with axial symmetry by gluing together small spheres positioned end-to-end along a geodesic. Such surfaces cannot exist in Euclidean space, but we show that the gradient of the ambient scalar curvature acts as a `friction term' which permits the usual analytic gluing construction to be carried out.
\end{abstract}

\renewcommand{\baselinestretch}{1.25}
\normalsize

\section{Introduction}
\paragraph*{Background.}
The study of constant mean curvature (hereafter CMC) surfaces in $\R^3$, or more generally in three-dimensional Riemannian manifolds, is a well established field of Riemannian geometry and the literature concerning the construction and properties of such surfaces is enormous.  One particular method for constructing CMC surfaces is by analytic gluing techniques.  These go back to the work of Kapouleas \cite{kapouleas7},\cite{kapouleas6} and have been further developed by many others, including the first author with Pacard \cite{mepacard1},\cite{mepacard2} and the second author with Pacard and also with Pollack \cite{mazzeopacardends},\cite{mazzeopacardpollack}. (See \cite{mazzeosurvey} and \cite{pacardsurvey} for surveys about the current  state of this approach.)  The general idea here is to take connected sums of simple surfaces (for instance the classical examples of nonminimal CMC surfaces in $\R^3$ --- the sphere, the cylinder, and the one-parameter family of Delaunay surfaces of revolution) to produce more general CMC surfaces of finite topology, both compact and non-compact.  Such constructions work by finding a surface whose mean curvature is nearly CMC embedding of the connected sum and perturbing it to have constant mean curvature.  

There are numerous constraints on the structure of CMC surfaces of finite topology in $\R^3$. For example, Meeks \cite{meeks} proved that any end of a complete Alexandrov-embedded CMC surface in $\R^3$ is cylindrically bounded; while Korevaar, Kusner and Solomon \cite{kks} improved this by showing that any such end converges exponentially to one end of a Delaunay surface.  Furthermore, the possible directions of the axes of these ends are also subject to limitations, as well as the flexibility to change these directions within  the moduli space of all such surfaces, see \cite{gks},\cite{gks2} and \cite{gbkkrs}.  These limitations are phrased in terms of a certain \emph{flux} that was discovered by Kusner. The flux is a vector associated to any closed loop in a CMC surface; it is constant under deformations of this loop, and in fact only depends on the homology class of this loop in the surface. There is a flux associated to a simple positively oriented loop around each asymptotically Delaunay end, which depends only the direction of the axis and the neck-size of the limiting Delaunay surface. The homological invariance also shows that the sum over all ends of these limiting fluxes must vanish, which is a global balancing condition for the entire CMC surface. One immediate consequence is the non-existence of a complete Alexandrov-embedded CMC surface in $\R^3$ with only one end. The flux also provides useful local information. For instance, it is a crucial ingredient in the gluing constructions mentioned above, since as we explain more carefully below, one must choose the initial approximate CMC configurations so that the fluxes are almost constant across the necks of the connected sums. 

We turn now to a newer theme in this subject, namely the study of sequences of CMC surfaces with mean curvature tending to infinity in an arbitrary $3$-manifold. Examples in $\R^3$ include sequences of dilations of the complete CMC surfaces with $k$ asymptotically Delaunay ends.  Such sequences of surfaces `condense' onto one-dimensional sets, here a union of half-lines meeting  at a point. This seems to be a general phenomenon: Rosenberg \cite{rosenberg} has shown that if $\Sigma$ is a closed CMC surface in an arbitrary (compact) $3$-manifold $M$, with sufficiently large mean curvature $H$, then $M \setminus \Sigma$ has two components, and the inradius at any point in one of these components is bounded above by $C/H$. In other words, $\Sigma$ looks like a tube around some (presumably $1$-dimensional) set $\gamma$. If $\Sigma_j$ is any sequence of CMC surfaces with mean curvature $H_j \to \infty$ condensing to a curve $\gamma$, then a formal calculation (assuming that the supremum of the pointwise norm of the second fundamental form is of the same order as $H_j$) shows that $\gamma$ is a geodesic, or at least a union of geodesic arcs. This leads to the following central question.

\begin{quote}
{\bf Question:} What are the possible condensation sets $\gamma$ in a $3$-manifold $M$ for sequences of
CMC surfaces $\Sigma_j$ with mean curvatures $H_j \nearrow \infty$.
\end{quote}

The obvious guess is that a condensation set is some sort of network of geodesics. Based on the examples of dilated CMC surfaces in $\R^3$, one expects each edge of this geodesic network to have a `weighting' which carries information about the Delaunay parameters of the CMC tubular piece which converges to that edge. This is far from being proved, but there are some very partial results.  Motivating this conjecture and as a first step on it, the second author and Pacard \cite{mazzeopacardtubes} proved that if $\gamma$ is any closed geodesic in $M$ which is non-degenerate (in the sense that its Jacobi operator is invertible), then most geodesic tubes of sufficiently small radius about $\gamma$ can be perturbed to CMC surfaces with large $H$.  The present paper undertakes a next step toward this conjecture. We prove that for very special (non-compact) ambient $3$-manifolds, there do exist sequences of one-ended CMC surfaces condensing to geodesic rays and sequences of compact CMC surfaces condensing to geodesic intervals. However, for reasons we explain below, it is not clear whether the fact that these limiting curves are geodesics is their most relevant feature.  Our construction works in higher dimensions too, i.e.\ we construct sequences of CMC hypersurfaces condensing to a ray or interval. 

There is an important feature of this condensation question which has not been mentioned yet. The first result about CMC surfaces of high mean curvature was due to Ye \cite{ye} in the early 1990's.  He proved that if $p$ is a non-degenerate critical point of the scalar curvature $S$ of $M^{n+1}$, then geodesic spheres around $p$ with small radius may be perturbed to CMC surfaces with large $H$. He also showed the converse: assuming bounded eccentricity, sequences of CMC spheres with $H \to \infty$ converge to a point $p$ where $\nabla S(p) = 0$.  A more recent paper by Pacard and Xu \cite{pacardxu} considers the same problem in manifolds with constant scalar curvature and proves that there is a secondary curvature function whose critical points regulate the location of these small CMC spheres.  Therefore the role of the scalar curvature of the ambient manifold in the question of CMC surfaces condensing to one-dimensional sets must be addressed.

The CMC surfaces constructed in this paper are perturbations of collections of small spheres joined together by even smaller catenoidal necks, all arranged along a curve $\gamma$.  The sizes of the catenoidal necks are quite small compared to the radii of the spheres, but quite strikingly, these neck-sizes must vary along this chain of spheres.  Indeed, it is precisely because these neck-sizes decrease that the surface eventually `caps off' to an end rather than continuing.  This phenomenon is caused by a flux formula that involves the gradient of the scalar curvature of the ambient manifold.  Indeed, unlike in Euclidean space, the difference of the fluxes computed on two loops which are close to one another need not vanish, but may be computed in terms of a surface integral involving $\nabla S$. (This is completely analogous to the generalized Pohozaev identity discovered by Schoen which arises in his construction of metrics of constant positive scalar curvature \cite{schoen}.) In our setting, this shows that the difference between successive neck-sizes can be expressed in terms of the gradient of the scalar curvature along the axis connecting these two necks.  Thus, in some sense, $\nabla S$ acts as a friction term.  By contrast, the scalar curvature of $M$ does not play a role in the location of the $1$-dimensional condensation set in \cite{mazzeopacardtubes}; the only important feature there is that the condensation set is a closed non-degenerate geodesic. This is almost surely because the CMC surfaces constructed there are nearly cylindrical. 

We now describe this more carefully. Let $\Sigma$ be a hypersurface with constant mean curvature $H$ in $(M^{n+1}, g)$. Suppose that $\mathcal U$ and $\mathcal W$ are open sets in $\Sigma$ and $M$, respectively, such that $\partial \bar{\mathcal W} = \bar{\mathcal U} \cup Q$ for some hypersurface-with-boundary $Q$. If there happens to exist a Killing field $V$ on $M$, then the first variation formula for the area of $\mathcal U$ with the volume of $\mathcal W$ fixed relative to the one-parameter family of diffeomorphisms for $V$ gives that 
\begin{equation}
	\mylabel{eqn:balancing}
	\int_{\partial \mathcal U} g(\nu, V) - H \int_Q g(N, V) = 0;
\end{equation}
here $\nu$ is the unit normal vector field of $\mathcal V$ in $\Sigma$ and $N$ is the unit normal vector field of $Q$ in $M$. The flux itself is defined as 
\begin{equation}
	\mylabel{eqn:flux}
	\int_{\gamma} g(\nu, V) - H \int_Q g(N, V)
\end{equation}
where $\gamma$ is a curve in $\Sigma$ and $Q$ is any surface in $M$ with $\partial Q = \gamma$; this integral is independent of the choice of $Q$.  Thus \eqref{eqn:balancing} is the statement that the flux depends only on the homology class of $\gamma$ in $\Sigma$.  

As already noted, these flux integrals determine when an approximately CMC surface can be perturbed to be exactly CMC. This can be explained more concretely as follows.  Suppose first that $M = \R^{n+1}$ and let $\Sigma$ consist of a collection of spheres of radius $r$ (hence mean curvature $n/r$) connected to each other by small catenoidal necks. Let $\mathcal U$ be one of these spheres with two small spherical caps removed where the necks are attached, $Q$ the union of two disks capping these boundaries and $\mathcal W$ the slightly truncated ball enclosed by $\mathcal U \cup Q$.  Then \eqref{eqn:balancing} becomes
\begin{equation}
	\label{eqn:balancingone}
	\int_{\partial \mathcal U} g(\nu, V) - \frac{n}{r} \int_Q g(N, V) = 
\sum_{\mbox{\scriptsize all necks}} r^{n-1} \eps_i V_i + \mathcal O(r^{n-1} \eps^2),
\end{equation}
where $V_i$ is the unit vector pointing from the center of sphere in question to the $i^{\mathrm{th}}$ neck, 
$r \eps_i$ is the width of this neck and $\eps := \max_i \{ \eps_i \}$.  If $\Sigma$ were exactly CMC, the 
left hand side would necessarily vanish.  If $\Sigma$ is not exactly CMC, then it is a fundamental fact that in order to find a nearby CMC surface, it suffices that the leading term on the right hand side of \eqref{eqn:balancingone} must vanish for each spherical region $\mathcal U$ in $\Sigma$.  If this condition is satisfied, the approximate CMC surface $\Sigma$ is called \emph{balanced}.  Note that it is impossible to have a balanced approximately CMC surface where some sphere has only one spherical neighbour. 

When the ambient manifold $(M^{n+1},g)$ is arbitrary, one can form approximate CMC surfaces with large $H$ by attaching together some large collection of geodesic spheres of very small radius $r$. There are (usually) no Killing fields, but we can use the approximate Killing fields corresponding to translations and rotations in Riemann normal coordinates based at the center of any one of these spheres. Formula \eqref{eqn:balancing} now becomes
\begin{equation}
	\label{eqn:balancingtwo}
	\int_{\partial \mathcal U} g(\nu, V) - H \int_Q g(N, V) = 
\sum_{\mbox{\scriptsize all necks}} r^{n-1} \eps_i V_i  - C r^{n+2} \nabla S (p) + \mathcal O(r^{n-1} \eps^2) + \mathcal O(r^{n+4})
\end{equation}
where $\nabla S$ is the gradient of the scalar curvature of $M$ and $C$ is some explicit dimensional constant.  Note that when applied to a sphere with no neighbours, hence with all $\eps_i = 0$ by default, this gives Ye's condition that the right hand side of \eqref{eqn:balancingtwo} must vanish like $r^{n+4}$. 

The main point in this paper is to exploit the contribution of $\nabla S$ in \eqref{eqn:balancingtwo}. Spheres 
of radius $r$ are joined by necks of width $r \eps$, where by this same formula it is natural to assume that $\eps = \mathcal O(r^3)$; these configurations are arranged in such a way that the leading term on the right in \eqref{eqn:balancingtwo} vanishes.  The perturbation argument producing a nearby CMC surface is then not so different than the one in Euclidean space.
  
\paragraph*{Description of the surfaces.}  
Two specific examples of CMC surfaces in $M$ exhibiting markedly different properties from those 
occuring in Euclidean space will be produced in this paper. We shall make extremely strong assumptions
about the geometry of $(M,g)$ in order to simplify the calculations, which even so are still quite lengthy. Thus our result should be regarded as a model for what should happen in more general manifolds, though that would take considerably more work. Our surfaces will consist of geodesic spheres of small radius $r$ arranged along a geodesic segment or ray $\gamma \subset M$ and joined by suitably scaled pieces of catenoids. In the first example, some large number of spheres, on the order of $1/r$, are glued together so that the resulting surfaces are embedded and compact; the second is embedded and complete, and is built from some large number of spheres joined at one end to a half  Delaunay surface of small neck size. In either case, there is a terminal spherical component which has only one spherical neighbour.

We make the following assumptions about $(M,g)$.  First, let $\gamma$ be a geodesic segment or ray in
$M$ and assume that there is a neighbourhood of $\gamma$ in which the metric $g$ is axially symmetric, 
i.e.\ invariant with respect to rotations about the axis $\gamma$. Thus Fermi coordinates around $\gamma$
identify this tubular neighbourhood with $[0,L] \times D$ (where $L = \infty$ is allowed and $D$ is a disk in $\R^{n-1}$), and
\begin{equation}
	\label{eqn:metricform}
	g = dt^2 +  A(t) \delta;
\end{equation}
here $\delta$ is the standard Euclidean metric on $D$ and $A(t)$ is a smooth, strictly positive function of the arclength $t$ along $\gamma$. The scalar curvature of $g$ is $S :=  A^{-2} (- n A \, \dot A  + \frac{2 - (n-1)(n-2)}{4} \dot A^2)$.  Further assumptions on $A$ depend on whether we wish a finite-length or one-ended CMC submanifold.

\begin{enumerate}
	\item In order to construct a finite-length surface, assume that $A$ is an even function of $t$ so that 
the reflection $t \to -t$ induces an isometry of the tubular neighbourhood of $\gamma$. Assume furthermore 
that $t=0$ is a non-degenerate local maximum of $S$.
	\item In order to construct a non-compact one-ended surface, assume that when $t > 0$, the scalar curvature 
is negative and increases monotonely to $0$, and that $|S(t)| \leq C \me^{\alpha t}$ for some $\alpha< 0$. (One function which satisfies this is $A(t) := 1 + \me^{-t}$.)
\end{enumerate}

These assumptions significantly reduce the complexity of the perturbation argument, but allow for the one feature which allows for this new behaviour of CMC surfaces, namely that $\nabla S$ points along $\gamma$. By our assumptions, however, the geodesic $\gamma$ is also an integral curve for $\nabla S$, and it is unclear which of these two features is the crucial one. One basic question we leave open is the geometric characterization of these condensation curves in more general ambient geometries. We expect new and interesting behaviour to occur when $\nabla S$ no longer is required to point along geodesics.

Our main result can be expressed as follows.
\begin{nonumthm}
	Let $M$ be a Riemannian manifold with the special features described above.
	\begin{itemize}

	\item Let $I$ be a finite segment of the geodesic $\gamma$, where the parametrization of $\gamma$ is such that the reflection $t \mapsto -t$ is an isometry in some neighbourhood of $I$. Then there exists an $r_0 > 0$ so that for every $0 < r < r_0$, there is a CMC surface $\Sigma^F_r$ which is a small perturbation of a surface constructed by gluing together $\mathit{length}(I)/r$ spheres of radius $r$ with centers lying on $\gamma$.

	\item  Let $I$ be a ray of the geodesic $\gamma$. Then there exists an $r_0 > 0$ so that for every $0 < r < r_0$, there is a CMC surface $\Sigma^{\mathit{OE}}_r$ which is a small perturbation of a surface constructed by gluing together a number $O(1/r)$ spheres of radius $r$ with centers lying on $\gamma$, together with an end of a Delaunay surface whose axis lies along $\gamma$.

	\end{itemize}
\end{nonumthm}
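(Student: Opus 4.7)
The plan is to construct $\Sigma_r^F$ and $\Sigma_r^{\mathit{OE}}$ as small normal graphs over explicit approximate CMC hypersurfaces $\tilde\Sigma_r$ built by gluing geodesic spheres and catenoidal necks along $\gamma$, following the Kapouleas--Mazzeo--Pacard analytic gluing paradigm. The key simplification from the axial geometry is that all balancing constraints reduce to a one-parameter recursion for the neck sizes $\eps_k$, and this is precisely the mechanism by which the variation of $S$ along $\gamma$ forces the chain to taper and close off into a terminal sphere.

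\textbf{Construction of $\tilde\Sigma_r$.} Place geodesic spheres $S_{p_k,r}$ of radius $r$ at points $p_k\in\gamma$ with arclength coordinates $t_k$ spaced by $\approx 2r$, and insert a rescaled catenoidal neck of waist radius $r\eps_k$ between $S_{p_k,r}$ and $S_{p_{k+1},r}$. By axial symmetry, the only non-trivial balancing condition coming from \eqref{eqn:balancingtwo} on the sphere $S_{p_k,r}$, tested against the approximate Killing field given by translation along $\gamma$, reads
$$\eps_{k+1}-\eps_k \;=\; C r^{3}\,S'(t_k) + O(r^{3}\eps) + O(r^{5}).$$
For $\Sigma_r^F$, I exploit the reflection $t\mapsto -t$ to place the spheres symmetrically; since $S'<0$ on $(0,L]$, the $\eps_k$ decrease monotonically, and I truncate the chain at the last index $N$ at which the terminal one-neighbour condition $-\eps_{N-1}\approx Cr^{3}S'(t_N)$ can be solved, grafting a smooth spherical cap at $t_N$. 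For $\Sigma_r^{\mathit{OE}}$, the recursion starts at the analogous terminal cap and propagates forward; the hypothesis that $S$ increases monotonically to $0$ with $|S|\le Ce^{\alpha t}$ guarantees $\eps_k\to\eps_\infty>0$, and a half Delaunay surface of neck parameter $\eps_\infty$ is spliced onto the tail.

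\textbf{Linearization and fixed point.} In weighted Hölder spaces $\ckann$ on $\tilde\Sigma_r$, with weights adapted to catenoidal decay at the necks and to the $r$-scaling on the spheres, the Jacobi operator $L_{\tilde\Sigma_r}=\Delta_{\tilde\Sigma_r}+|A|^{2}+\ric(\nu,\nu)$ has an approximate cokernel spanned, on each spherical piece, by the single $S^{n}$ coordinate function along $\gamma$ (the transverse coordinate modes being eliminated by axial symmetry). The balancing recursion above is precisely the orthogonality condition that places the initial mean curvature error $H(\tilde\Sigma_r)-n/r$ into the range of $L_{\tilde\Sigma_r}$. A parametrix assembled from the known inverses of the Jacobi operators on a round sphere, on the catenoid, and on a Delaunay surface then produces a right inverse $G$ with operator norm bounded by $Cr^{-\beta}$ for an explicit $\beta>0$. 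Expanding the CMC equation for the normal graph $\Sigma_r^{u}=\exp_{\tilde\Sigma_r}(u\nu)$ as $L_{\tilde\Sigma_r}u = -(H(\tilde\Sigma_r)-n/r)-Q(u)$ with $Q$ the quadratic remainder and applying $G$ turns the problem into a contraction-mapping fixed-point equation in a small ball of $\ctann$, solved by standard quadratic estimates on $Q$ together with the bound on $G$.

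\textbf{Main obstacle.} The principal difficulty is the linear analysis: producing a right inverse $G$ whose norm stays bounded by a fixed negative power of $r$ even though the number of spheres $N\sim 1/r$ tends to infinity and the neck waists shrink to $O(r^{4})$. Intimately tied to this is the verification that the balancing recursion admits solutions with $\eps_k$ strictly positive throughout and bounded below by a definite power of $r$, so that the error terms $O(r^{3}\eps)$ and $O(r^{5})$ never overtake the principal term $Cr^{3}S'(t_k)$. In the compact case this is secured by the non-degenerate maximum of $S$ at $t=0$; in the one-ended case it follows from the required monotonicity and exponential decay of $|S|$ --- this is exactly where the special scalar-curvature hypotheses on $g$ enter the argument in an essential way.
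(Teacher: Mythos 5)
Your overall paradigm (spheres plus catenoidal necks along $\gamma$, weighted H\"older spaces, parametrix, contraction mapping, and a neck-size recursion driven by $r^3\dot S$) is the same as the paper's, and your balancing recursion $\eps_{k+1}-\eps_k \approx C r^3 S'(t_k)$ is indeed the leading-order content of the paper's finite-dimensional system \eqref{eqn:neckchoicetwo}. But there is a genuine structural gap in how you pass from the balanced approximate surface to an exact solution. You impose orthogonality of the \emph{initial} error $H(\tilde\Sigma_r)-n/r$ to the approximate cokernel and then claim a globally bounded right inverse $G$ and a single contraction argument. The linearized operator on the glued surface is not invertible with usable bounds on the full space: it has an approximate cokernel of dimension growing like $K\sim 1/r$, and orthogonality of the initial error does not persist under the nonlinear iteration --- the quadratic terms and the metric corrections reintroduce components along the sphere Jacobi fields at every step, and these cannot be absorbed by $G$. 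This is why the paper first solves the equation only modulo the finite-dimensional space $\tilde{\mathcal W}$ (Propositions \ref{prop:soluptocoker} and \ref{prop:soluptocokeroe}), for \emph{arbitrary} small parameters, and only afterwards kills the exact projected error by adjusting the geometric parameters via the finite-dimensional implicit function theorem applied to the balancing map of Section \ref{sec:balancing}, whose leading terms are computed from the flux formula in Proposition \ref{prop:integralexp} and include error terms depending on the PDE solution $f_r(\sigma,\delta)$ itself. Your scheme has no mechanism for this a posteriori adjustment.

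Related to this, your parameter count is too small. The paper's obstruction space contains, per junction, not only the spherical translation Jacobi field $J_k$ but also a second family coming from the neck matching (paired with the odd catenoid Jacobi field $I_k$ in Lemma \ref{lemma:projop}); killing it requires the neck \emph{displacement} parameters $\delta_k$ in addition to the neck sizes $\eps_k$, which your construction does not carry at all --- with only the $\eps_k$ recursion you are one equation short per neck. In the one-ended case there is a further issue you do not address: the linearized operator on the Delaunay end is not surjective on decaying spaces, and the paper must allow the solution to contain the bounded and linearly growing Delaunay Jacobi fields (the space $\tilde{\mathcal V}$) and then absorb these components by varying the Delaunay parameters $(\sigma_K,\delta_K)$; splicing on a fixed half Delaunay surface of neck parameter $\eps_\infty$ leaves no freedom to do this, so the fixed-point argument on the noncompact end would fail as stated.
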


\noindent The rescalings of $\Sigma^F_r$ and $\Sigma^{\mathit{OE}}_r$ by the factor $1/r$ converge as $r \to 0$ to an infinite or semi-infinite string of spheres of radius $1$ with centers arranged along a segment or ray in $\R^n$.  The precise mode of convergence will become clear in the course of the proof. 

As already noted, the proof roughly follows the proofs of analogous gluing theorems for CMC surfaces in Euclidean space: an approximately CMC surface $\Sigma$ is deformed via small normal deformations which are parametrized by functions  on $\Sigma$, which transforms the problem to one of finding a solution of the constant mean curvature PDE. One difficulty is the fact that one expects to find a solution only when all parameters are very small, but this means that the geometry of $\Sigma$, and hence the PDE which must be solved, are very degenerate. In addition, the Jacobi operator (i.e.\ the linearized mean curvature operator) on $\Sigma$ has small eigenvalues generated by the nullspaces of the Jacobi operators on each spherical and catenoidal component. The PDE is first solved on the finite codimensional orthogonal complement of this approximate nullspace. By repositioning the various components of this approximate solution one can show that it is possible to kill the remaining finite dimensional piece too, provided the map carrying the `repositioning parameters' to the right hand side of the flux formula \eqref{eqn:balancingtwo} is an isomorphism. One must also keep careful track of the dependence on the neck-sizes $\eps$ and radii $r$ in all of this to guarantee that the estimates controlling the existence of the solution of the constant mean curvature PDE are strong enough. 

\bigskip
\noindent \scshape Acknowledgement. \upshape The authors wish to thank Frank Pacard for interesting discussions during the course of this work.

\section{Preliminary Geometric Calculations}
\label{sec:prelimcalc}

The approximate solutions constructed in Section \ref{sec:approxsol} are assembled from small geodesic spheres  centered on points of the geodesic $\gamma$ in $M$ connected to one another by small catenoidal necks. It is most convenient to use geodesic normal coordinates centered at points of $\gamma$. Since the ambient metric  $g$ is a second order perturbation of the Euclidean metric in these coordinates, the first step in every estimate is to perform the computations for a Euclidean metric; the second step is to incorporate the 
perturbations coming from the metric into the estimates.  In this section we derive various expansions 
of the mean curvature and other geometric quantities. 

\subsection{Geometry of Surfaces in a Geodesic Normal Coordinate Chart}
\label{sec:gnormcoord}

If $p$ is any point in a Riemannian manifold $(M, g)$, then in terms of geodesic normal coordinates centered at $p$, 
$$ 
g := \mathring g + P :=  \big( \delta_{ij} + P_{ij}(x) \big) \, \dif x^i \otimes \dif x^j 
$$
where $\mathring g$ is the Euclidean metric and $P$ is the perturbation term.  It is well known that
\begin{equation}
	\label{eqn:metricexp}
	P_{ij}(x) = \frac{1}{3} \sum_{l,m} R_{iljm}(0) x^l x^m + 
\frac{1}{6} \sum_{l,m,n} R_{iljm;n}(0) x^l x^m x^n  + \mathcal O (\| x \|^4).
\end{equation}
where $R_{ijkl} := \mathrm{Rm}( \frac{\partial}{\partial x^i}, \frac{\partial}{\partial x^j}, \frac{\partial}{\partial x^k}, \frac{\partial}{\partial x^l})$ and $R_{ijkl;m} := \bar \nabla_{\! \! \frac{\partial}{\partial x^m}, } \mathrm{Rm}( \frac{\partial}{\partial x^i}, \frac{\partial}{\partial x^j}, \frac{\partial}{\partial x^k}, \frac{\partial}{\partial x^l})$ are components of the ambient Riemann curvature tensor and its covariant derivative (the ambient covariant derivative is denoted $\bar \nabla$).

Suppose that  $\Sigma$ is a surface in $M$. The following results provide expansions for various geometric quantities of $\Sigma$ in terms of $P$. Here and in the rest of the paper, let $h, \Gamma, \nabla, \Delta, N, 
B, H$ be the induced metric, Christoffel symbols, covariant derivative, Laplacian, unit normal vector, second 
fundamental form, and mean curvature of $\Sigma$ with respect to the metric $g$, and let $\mathring h, \mathring 
\Gamma, \mathring \nabla, \mathring \Delta, \mathring N, \mathring B, \mathring H$ be these same objects with 
respect to the Euclidean metric. Near a point $x \in \Sigma$, let $\{ E_1, E_2\}$ be a local
frame for $T\Sigma$ induced by some coordinate system and denote by $Y := \sum_j x^j \frac{\partial}{\partial x^j}$ 
the position vector.  Define
\begin{align*}
	\mathcal P_{00} &:= P(\mathring N, \mathring N) \\
	\mathcal P_{0j} &:= P(\mathring N, E_j) \\
	\mathcal P_{ij} &:= P(E_i, E_j) \\
	\mathcal P_{ijt} &:= \tfrac{1}{2} \big( (E_i P) (E_j, E_t) + (E_j P)(E_i, E_t) - (E_t P) (E_i, E_j) \big) \\
	\mathcal P_{ij0} &:= \tfrac{1}{2} \big( (E_i P) (E_j, \mathring N) + (E_j P)(E_i, \mathring N) - (\mathring N P) (E_i, E_j) \big) \, .
\end{align*}
Straightforward geometric calculations now yield the following results.

\begin{lemma}
	\label{prop:expansions}
	The induced metric of $\Sigma$ and the associated Christoffel symbols are given by
	$$h_{ij} = \mathring h_{ij} + \mathcal P_{ij} \qquad \mbox{and} \qquad \Gamma_{ijk} = \mathring \Gamma_{ijk} + \mathcal P_{ijk} + \mathcal P_{0k} \mathring B_{ij} \, .$$
	The normal vector of $\Sigma$ satisfies
	\begin{align*}
		N = \frac{ \mathring N - h^{ij} \mathcal P_{0j} E_i }{ \big( 1 + \mathcal P_{00} - \mathring h^{ij} \mathcal P_{0i} \mathcal P_{0j} \big)^{1/2} }  \, .
	\end{align*}
	The second fundamental form of $\Sigma$ satisfies
	\begin{align*}
		B_{ij} &= \big( 1 + \mathcal P_{00} - \mathring h^{ij} \mathcal P_{0i} \mathcal P_{0j} \big)^{1/2}  \mathring B_{ij} + \frac{\mathcal P_{ij0} - \mathring h^{kl} \mathcal P_{0k} \mathcal P_{ijl} }{  \big( 1 + \mathcal P_{00} - \mathring h^{ij} \mathcal P_{0i} \mathcal P_{0j} \big)^{1/2} } \, .
	\end{align*}
\end{lemma}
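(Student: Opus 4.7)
The plan is to derive the four expansion formulas in sequence, using two master tools. First, the Gauss formula for the Euclidean ambient connection, $\bar\nabla^{\mathring g}_{E_i} E_j = \mathring\Gamma^{\,k}_{ij} E_k + \mathring B_{ij}\mathring N$, lets me trade ambient Euclidean differentiation for intrinsic quantities of $(\Sigma,\mathring h)$. Second, in the Cartesian frame of the normal coordinates the difference $\bar\nabla^g - \bar\nabla^{\mathring g}$ equals the Christoffel symbols of $g$, which take the form $A^{\gamma}_{\alpha\beta} = \tfrac{1}{2}\mathring g^{\gamma\delta}(\partial_\alpha P_{\beta\delta} + \partial_\beta P_{\alpha\delta} - \partial_\delta P_{\alpha\beta})$. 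All of the $\mathcal P$-quantities in the statement will arise from contracting this correction tensor (or the ambient derivative of $P$) with the tangential or normal vectors of $\Sigma$.

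The induced metric formula is immediate from $g = \mathring g + P$ evaluated on pairs of coordinate tangent vectors. For the Christoffel symbols, I would apply the Koszul identity to both $h$ and $\mathring h$ and subtract, reducing the problem to computing $E_i \mathcal P_{jk}$. Using the Leibniz rule together with the flat Gauss formula expands this as $(E_iP)(E_j,E_k) + \mathring\Gamma^{\,l}_{ij}\mathcal P_{lk} + \mathring\Gamma^{\,l}_{ik}\mathcal P_{jl} + \mathring B_{ij}\mathcal P_{0k} + \mathring B_{ik}\mathcal P_{0j}$. Assembling the three such pieces in the Koszul combination and using the symmetries of $\mathcal P_{\cdot\cdot}$, $\mathring\Gamma^{\,\cdot}_{\cdot\cdot}$, and $\mathring B_{\cdot\cdot}$, the mismatched cross-terms cancel pairwise and the stated expression emerges (working at a base point where a Euclidean-normal frame has been chosen so that the residual $\mathring \Gamma\cdot\mathcal P$ term is absorbed).

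For the normal vector I would make the ansatz $N = a\mathring N + b^i E_i$ and impose $g(N,E_j) = 0$ and $g(N,N) = 1$. The first condition yields $b^i = -a h^{ij}\mathcal P_{0j}$; substituting into the second gives $a^{-2} = 1 + \mathcal P_{00} - h^{ij}\mathcal P_{0i}\mathcal P_{0j}$, matching the claimed expression (the substitution of $\mathring h^{ij}$ for $h^{ij}$ in the denominator is harmless at the order of the expansion since the difference is already of higher order in $P$). For the second fundamental form I use $B_{ij} = g(\bar\nabla^g_{E_i}E_j, N)$ and split via $\bar\nabla^g = \bar\nabla^{\mathring g} + A$. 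The $\bar\nabla^{\mathring g}$-piece uses the flat Gauss formula, but now the tangential contribution $\mathring\Gamma^{\,k}_{ij} E_k$ pairs to zero against $N$ because $N$ is $g$-orthogonal to $T\Sigma$, leaving only $\mathring B_{ij}\, g(\mathring N, N) = \mathring B_{ij}/a$, which is exactly the first summand in the claimed formula. The $A$-piece contributes $A^{\gamma}_{\alpha\beta} X^{\alpha}_i X^{\beta}_j\, g(\partial_\gamma, N)$; expanding $g(\partial_\gamma, N)$ by pairing with $N = a\mathring N + b^l E_l$, the $\mathring N$-part produces $\mathcal P_{ij0}$ and the $E_l$-part, once the formula for $b^l$ is used, produces $-\mathring h^{kl}\mathcal P_{0k}\mathcal P_{ijl}$; the antisymmetrization implicit in the definitions of $\mathcal P_{ij0}$ and $\mathcal P_{ijl}$ is exactly what allows these combinations to arise cleanly.

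I expect the main obstacle to be the bookkeeping in the $B_{ij}$ computation, where several contributions of the same order — from the tangential component of $N$ hitting the $A$-correction, from using $g$ rather than $\mathring g$ when pairing vectors, and from the raw Christoffel pieces of $\bar\nabla^g$ — must be organized so that the final expression collapses into the stated form. The other three formulas are routine once the Koszul identity and the ansatz for $N$ are set up, and reduce to symmetric-algebraic manipulations using the $\mathcal P$-notation introduced prior to the statement.
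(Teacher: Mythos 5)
Your proposal is correct, and it is precisely the ``straightforward geometric calculation'' that the paper leaves to the reader (no proof of this lemma is given in the text): the Koszul combination applied to $h = \mathring h + \mathcal P$, the orthogonality--normalization ansatz for $N$, and the splitting of $\bar\nabla^g$ into the flat derivative plus the Christoffel correction of $g$, paired against $N$, reproduce all four formulas exactly as you describe. The two caveats you flag --- the residual $\mathring\Gamma^{\,l}_{ij}\mathcal P_{lk}$ term in the Christoffel identity and the substitution of $\mathring h^{ij}$ for $h^{ij}$ in the normalizing factor, both of which are higher order in $P$ --- are features of how the lemma is stated rather than gaps in your argument.
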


\begin{lemma}
	\label{prop:approxexpansions}
	The induced metric of $\Sigma$ satisfies
	$$h^{ij} =  \mathring h^{ij} - \mathring h^{is} \mathring h^{jt} \mathcal P_{st} + \mathcal O(\| Y \|^4) \, . $$ 
	The normal vector of $\Sigma$ satisfies
	\begin{align*}
		N = \big( 1 - \tfrac{1}{2} \mathcal P_{00} \big) \mathring N - \mathring h^{ij} \mathcal P_{0j} E_i + \mathcal O(\| Y \|^4)   \, .
	\end{align*}
	The second fundamental form and mean curvature of $\Sigma$ satisfy
	\begin{align*}
		B_{ij} &= \big( 1 + \tfrac{1}{2} \mathcal P_{00} \big)\mathring B_{ij} + \mathcal P_{ij0}  + \mathcal B_{ij} (Y, \mathring B, \mathring N, E_1, E_2)  \\
		H &=\big( 1 + \tfrac{1}{2} \mathcal P_{00} \big)  \mathring H + \mathring h^{ij} \mathcal P_{ij0}  - \mathring B^{ij} \mathcal P_{ij} + \mathcal H(Y, \mathring B, \mathring N, E_1, E_2) \, .
	\end{align*}
where $\mathcal B_{ij}$ and $\mathcal H$ are functions satisfying
$$\max_{i,j} \mathcal | B_{ij} (Y, \mathring B, \mathring N, E_1, E_2) | + | \mathcal H (Y, \mathring B, \mathring N, E_1, E_2) | \leq  C \| Y \|^3 (1 + \| Y \| \| \mathring B \| )$$
for a constant $C$ depending only on the curvature tensor of the ambient manifold at the center of the normal coordinate chart under consideration.
\end{lemma}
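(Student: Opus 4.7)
The plan is to derive each expansion directly from the exact formulas of Lemma~\ref{prop:expansions} by Taylor-expanding in the small quantity $P$. The key asymptotic input from \eqref{eqn:metricexp} is that $P_{ij}(x) = \mathcal O(\|Y\|^2)$, which forces the undifferentiated combinations $\mathcal P_{00}, \mathcal P_{0j}, \mathcal P_{ij}$ all to be $\mathcal O(\|Y\|^2)$, while the expressions $\mathcal P_{ijk}, \mathcal P_{ij0}$ involving one covariant derivative of $P$ are $\mathcal O(\|Y\|)$. With these sizes in hand everything else is bookkeeping.

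First, the expansion of the inverse induced metric is immediate from the formula $h_{ij} = \mathring h_{ij} + \mathcal P_{ij}$: viewing this as a matrix perturbation and applying the Neumann series gives $h^{ij} = \mathring h^{ij} - \mathring h^{is}\mathring h^{jt}\mathcal P_{st} + \mathcal O(\|Y\|^4)$. Next, applying the scalar expansion $(1+x)^{-1/2} = 1 - x/2 + \mathcal O(x^2)$ to the denominator appearing in the formula for $N$ in Lemma~\ref{prop:expansions}, with $x = \mathcal P_{00} - \mathring h^{ij}\mathcal P_{0i}\mathcal P_{0j} = \mathcal P_{00} + \mathcal O(\|Y\|^4)$, yields $N = (1 - \tfrac{1}{2}\mathcal P_{00})\mathring N - \mathring h^{ij}\mathcal P_{0j} E_i + \mathcal O(\|Y\|^4)$. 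The expansion of $B_{ij}$ follows by the same kind of Taylor expansion applied to both the prefactor $(1 + \mathcal P_{00} - \mathring h^{ij}\mathcal P_{0i}\mathcal P_{0j})^{1/2}$ and the denominator of the second term in Lemma~\ref{prop:expansions}; the leading pieces produce $(1 + \tfrac{1}{2}\mathcal P_{00})\mathring B_{ij} + \mathcal P_{ij0}$, and all remainders are absorbed into $\mathcal B_{ij}$.

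The mean curvature expansion is then obtained by contracting $H = h^{ij}B_{ij}$ using these two expansions. The leading terms are $\mathring h^{ij}(1 + \tfrac{1}{2}\mathcal P_{00})\mathring B_{ij} = (1 + \tfrac{1}{2}\mathcal P_{00})\mathring H$, the linear-in-derivatives term $\mathring h^{ij}\mathcal P_{ij0}$, and the cross term $-\mathring h^{is}\mathring h^{jt}\mathcal P_{st}\mathring B_{ij} = -\mathring B^{ij}\mathcal P_{ij}$; everything else lands in $\mathcal H$. The one step requiring actual care is the estimate on $\mathcal B_{ij}$ and $\mathcal H$: the remainders come in two distinct flavors, namely products of the form $\mathcal P_{00}\mathcal P_{ij0}$ or $\mathcal P_{0k}\mathcal P_{ijl}$ (of size $\mathcal O(\|Y\|^3)$ and carrying no factor of $\mathring B$), and products of the form $(\mathcal P)^2\mathring B$ arising from the higher-order terms in the Taylor expansion of $(1+x)^{\pm 1/2}$ (of size $\mathcal O(\|Y\|^4\|\mathring B\|)$). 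Combining the two flavors gives exactly the hybrid bound $C\|Y\|^3(1 + \|Y\|\|\mathring B\|)$, with the constant $C$ controlled, through \eqref{eqn:metricexp}, by the ambient Riemann tensor and its first covariant derivative at the center of the chart. This error-bookkeeping is the main (but entirely routine) content of the proof.
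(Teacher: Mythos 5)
Your proposal is correct and matches the route the paper intends: Lemma \ref{prop:approxexpansions} is obtained by Taylor-expanding the exact formulas of Lemma \ref{prop:expansions}, using that \eqref{eqn:metricexp} makes $\mathcal P_{00}, \mathcal P_{0j}, \mathcal P_{ij} = \mathcal O(\|Y\|^2)$ and $\mathcal P_{ijk}, \mathcal P_{ij0} = \mathcal O(\|Y\|)$, then contracting $H = h^{ij}B_{ij}$ and sorting the remainders into the $\mathcal O(\|Y\|^3)$ and $\mathcal O(\|Y\|^4\|\mathring B\|)$ flavors. Your error bookkeeping, including the hybrid bound $C\|Y\|^3(1+\|Y\|\|\mathring B\|)$, is exactly what the paper's ``straightforward geometric calculations'' amount to.
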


The mean curvature of $\Sigma$ may now be computed as
\begin{equation}
	\label{eqn:geomcexp}
	\begin{aligned}
		H &= \big( 1 + \tfrac{1}{6} \riem (\mathring N, Y, \mathring N, Y) + \tfrac{1}{12} \bar \nabla_Y \riem (\mathring N, Y, \mathring N, Y) \big) \mathring H \\
		&\qquad - \big( \tfrac{1}{3} \riem (E_i, Y, E_j, Y) + \tfrac{1}{6} \bar \nabla_Y \riem (E_i, Y, E_j, Y) \big) \mathring B^{ij} \\
		&\qquad - \tfrac{2}{3} \ric (Y, \mathring N) - \tfrac{1}{2} \bar \nabla_Y \ric(Y, \mathring N) + \tfrac{1}{12} \bar \nabla_{\mathring N} \ric(Y, Y) - \tfrac{1}{6} \bar \nabla_{\mathring N} \riem ( \mathring N, Y, \mathring N, Y) \\
		&\qquad + \mathcal O \big( \| Y \|^3 (1 +  \| Y \| \| \mathring B \|) \big)  \, .
	\end{aligned}
\end{equation}
The third line here contains the largest terms coming from the ambient curvature. 

\subsection{Mean Curvature Calculations in Euclidean Space}
\label{sec:euclideancalcs}

Let $\Sigma$ be a surface in Euclidean space.  Choose a function $f: \Sigma \rightarrow \R$ and define $\mathring \mu_f : \Sigma \rightarrow \R^{n+1}$ to be the normal deformation of $\Sigma$ by $f(p)$. The mean curvature operator $f \mapsto 
\mathring H \big[ \mathring \mu_f(\Sigma) \big]$ with respect to the Euclidean metric decomposes as 
\begin{align}
	\label{eqn:eucquad}
	\mathring H \big[ \mathring \mu_f(\Sigma) \big] &= \mathring H  + \mathring{\mathcal L} (f) + \mathring{\mathcal Q} (f) 
\end{align}
where $\mathring{\mathcal L} (f) := \mathring \Delta f + \| \mathring B \|^2 f$ is the linearized mean curvature operator with respect to the Euclidean metric and $\mathring{\mathcal Q} (f)$ is the quadratic and higher remainder term. The second fundamental form can be similarly expanded as $\mathring B \big[ \mathring \mu_f(\Sigma) \big] = \mathring B + \mathring B^{(1)} (f) +  \mathring B^{(2)} (f)$.  We now derive expansions for $ \mathring B^{(1)} (f)$, $ \mathring B^{(2)} (f)$ and $ \mathring{\mathcal Q}(f)$ in terms of $f$.  Although these results are fairly standard, it is important is to track the dependence 
on $\| \mathring B \|$ in the various error terms appearing in the expansions.

We first expand $\mathring B_f := \mathring B \big[ \mathring \mu_f(\Sigma) \big]$ and $\mathring H_f := 
\mathring H \big[ \mathring \mu_f(\Sigma) \big]$ in terms of $f$ and extract the constant, 
linear and higher-order parts.  Introduce
\begin{equation*}
	\beta_{st} := \mathring h_{st} - f \mathring B_{st} \qquad 
	\beta^{st} := \big[\mbox{Inverse of $\beta$}\big]^{st} \qquad
	D := \big( 1 + \beta^{ik} \beta^{jl} f_{,k} f_{,l} \mathring h_{ij} \big)^{1/2}
\end{equation*}
where a comma denotes ordinary differentiation in the coordinate directions. After some work, one finds that the Euclidean induced metric $\mathring h_f$, its inverse, and the Euclidean unit normal vector $\mathring N_f$ of $\mathring \mu_f(\Sigma)$ are
\begin{align*}
	[\mathring h_f]_{ij} &:=  \beta_{ik} \beta_{jl} \mathring h^{kl} + f_{,i} f_{,j} \\
	[\mathring h_f^{-1}]^{ij} &:= \beta^{im} \beta^{jn} \left( \mathring h_{mn} - \frac{ \beta^{kp} \beta^{lq} f_{,k} f_{,l} \mathring h_{mp} \mathring h_{nq}}{D^2} \right) \\
	\mathring N_f &:= \frac{1}{D} \big( \mathring N - \beta^{ij} f_{,i} E_j \big).
\end{align*}
The second fundamental form $\mathring B_f := \mathring B[\mathring \mu_f(\Sigma)]$ can thus be expressed in terms of $f$ as
\begin{equation}
	\label{eqn:quadsecondff}
	\begin{aligned}[]
		[\mathring B_f]_{ij} &= D^{-1}\left(\mathring B_{ij} + f_{;ij} - f \mathring B^k_i \mathring B_{kj} + \beta^{kl} f_{,l} ( f_{,i} \mathring B_{jk} +  f_{,j} \mathring B_{ik} + f  \mathring B_{jk;i}  )\right)
	\end{aligned}
\end{equation}
where a semicolon denotes the covariant derivative of $\Sigma$ with respect to $\mathring h_{st}$. We expand the inverse of the induced metric as 
$$\mathring h_f^{ij} := \mathring h^{ij} + 2 f  \mathring B^{ij} - \frac{1}{D^2} \beta^{im} \beta^{jn} \beta^{kp} \beta^{lq} f_{,k} f_{,l} \mathring h_{mp} \mathring h_{nq} + \eta^{im} \eta^{jn} \mathring h_{mn}$$
where the remainder $\eta^{ij}$ in $\beta^{ij} := \mathring h^{ij} + f \mathring B^{ij} + \eta^{ij}$ satisfies 
$| \eta^{ij} | = \mathcal O(|f|^2 \| \mathring B \|^2)$.  Now taking the trace of \eqref{eqn:quadsecondff} with respect to $\mathring h_f^{ij} $ yields the mean curvature $\mathring H_f := \mathring H(\mathring \mu_f(\Sigma)$ which is
\begin{equation}
	\label{eqn:quadmeancurv}
	\begin{aligned}
		D \times \mathring H_f &= \mathring H +  \mathring \Delta f +\| \mathring B \|^2 f + 2 f \mathring B^{ij} f_{;ij}   - 2 f^2  \mathrm{Tr}_\delta (\mathring B^3) + \mathring h^{ij} \beta^{kl} f_{,l} (2 f_{,i} \mathring B_{jk}  + f  \mathring B_{jk;i}  )  \\
		& + \left( - \frac{1}{D^2} \beta^{im} \beta^{jn}  \beta^{kp} \beta^{lq} f_{,k} f_{,l}  \mathring h_{mp} \mathring h_{nq} + \eta^{im} \eta^{jn} \mathring h_{mn} \right) ( \mathring B_{ij} + f_{;ij} - f \mathring B^k_i \mathring B_{kj} ) \\
		& + \left( 2 f \mathring B^{ij}   - \frac{1}{D^2} \beta^{im} \beta^{jn}  \beta^{kp} \beta^{lq} f_{,k} f_{,l}  \mathring h_{mp} \mathring h_{nq} + \eta^{im} \eta^{jn} \mathring h_{mn} \right) \beta^{kl} f_{,l} (2 f_{,i} \mathring B_{jk}  + f  \mathring B_{jk;i}  ) \, .
	\end{aligned}
\end{equation}
All of this is summarized in the following lemma.

\begin{lemma}
The linear parts of $\mathring B_f$ and $\mathring H_f$ are 
\begin{align*}
	[\mathring B^{(1)}(f)]_{ij} &:= f_{;ij} - f \mathring B_i^k \mathring B_{kj} \\
	\mathring{\mathcal L} (f) &:= \mathring \Delta f + \| \mathring B \|^2 f \, .
\end{align*}
The quadratic remainder parts of $\mathring B_f$ and $\mathring H_f$ are
\begin{align*}
	[\mathring B^{(2)}(f)]_{ij} &:= \frac{\beta^{kl} f_{,l}}{D} \big(   f_{,i} \mathring B_{jk} +  f_{,j} \mathring B_{ik} + f  \mathring B_{jk;i}   \big)  +  \left( \frac{1}{D} - 1 \right) \big( \mathring B_{ij} + f_{;ij} - f \mathring B_i^k \mathring B_{kj} \big)\\
	\mathring{\mathcal Q} (f) &:= \left. \frac{1}{D} \right[ 2 f \mathring B^{ij} f_{;ij}   - 2 f^2  \mathrm{Tr}_\delta (\mathring B^3) + \mathring h^{ij} \beta^{kl} f_{,l} (2 f_{,i} \mathring B_{jk}  + f  \mathring B_{jk;i}  ) \\
	&  \qquad \; + \left( - \frac{1}{D^2} \beta^{im} \beta^{jn}  \beta^{kp} \beta^{lq} f_{,k} f_{,l}  \mathring h_{mp} \mathring h_{nq} + \eta^{im} \eta^{jn} \mathring h_{mn} \right) ( \mathring B_{ij} + f_{;ij} - f \mathring B^k_i \mathring B_{kj} ) \\
	& \qquad \; + \left. \left( 2 f \mathring B^{ij}   - \frac{1}{D^2} \beta^{im} \beta^{jn}  \beta^{kp} \beta^{lq} f_{,k} f_{,l}  \mathring h_{mp} \mathring h_{nq} + \eta^{im} \eta^{jn} \mathring h_{mn} \right) \beta^{kl} f_{,l} (2 f_{,i} \mathring B_{jk}  + f  \mathring B_{jk;i}  ) \right] \\
	& \qquad \!\!+  \left( \frac{1}{D} - 1 \right) \big(  \mathring H +  \mathring \Delta f +\| \mathring B \|^2 f \big) \, .
\end{align*}
\end{lemma}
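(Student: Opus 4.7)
The plan is to read off the three pieces (zeroth order, linear, and quadratic-or-higher remainder) directly from the closed form identities for $[\mathring B_f]_{ij}$ in \eqref{eqn:quadsecondff} and for $D \cdot \mathring H_f$ in \eqref{eqn:quadmeancurv} that have already been derived above. Since those two formulas are exact, the lemma contains no new computation; its entire content is to identify the linear-in-$f$ coefficient in each expression and to rearrange the remaining terms into the stated remainders. The key elementary facts underlying the bookkeeping are that $\beta^{ij} = \mathring h^{ij} + O(|f|\|\mathring B\|)$, that $|\eta^{ij}| = O(|f|^2 \|\mathring B\|^2)$ by its definition, and that $D^{-1} - 1 = O(|\nabla f|^2)$, so any term involving $\eta$ or $D^{-1}-1$ is automatically of order $\geq 2$ in $(f,\nabla f)$.

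For $\mathring B_f$ I would first evaluate \eqref{eqn:quadsecondff} at $f=0$, noting that $\beta = \mathring h$, $\eta = 0$, and $D = 1$, which reduces it to $\mathring B_{ij}$. Linearising then, the $D^{-1}$ factor contributes trivially, and the only summands in the parenthesis that are linear in $f$ are $f_{;ij}$ and $-f \mathring B_i^k \mathring B_{kj}$, since each of the three remaining terms $\beta^{kl} f_{,l}(\cdot)$ carries two factors vanishing at $f=0$. This isolates $\mathring B^{(1)}(f)$ as claimed. Setting $\mathring B^{(2)}(f) := \mathring B_f - \mathring B - \mathring B^{(1)}(f)$ and grouping the residual terms in \eqref{eqn:quadsecondff} then yields precisely the two contributions $(D^{-1}-1)(\mathring B_{ij}+f_{;ij}-f \mathring B_i^k \mathring B_{kj})$ and $D^{-1}\beta^{kl} f_{,l}(f_{,i}\mathring B_{jk}+f_{,j}\mathring B_{ik}+f \mathring B_{jk;i})$ listed in the statement. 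The mean curvature proceeds identically: \eqref{eqn:quadmeancurv} presents $D \cdot \mathring H_f$ as $\mathring H$ plus $\mathring \Delta f + \|\mathring B\|^2 f$ plus a collection of blocks each carrying at least two factors drawn from $\{f, \nabla f, \nabla^2 f, \eta\}$. Writing $D^{-1} = 1 + (D^{-1}-1)$ and dividing, the unit contribution reproduces the linear operator $\mathring{\mathcal L}(f) = \mathring \Delta f + \|\mathring B\|^2 f$, while the correction $(D^{-1}-1)$ applied to the full bracket supplies the last line of $\mathring{\mathcal Q}(f)$; the three surviving quadratic-or-higher blocks, divided by $D$, reproduce the other lines of $\mathring{\mathcal Q}(f)$ verbatim.

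I do not anticipate a real obstacle here: the verification is routine bookkeeping, and the three elementary estimates recorded above make it immediate that every summand retained in $\mathring B^{(2)}(f)$ and $\mathring{\mathcal Q}(f)$ is of order at least two in $(f, \nabla f, \nabla^2 f)$. The reason for writing the remainder out so explicitly rather than absorbing everything into a single error bound is that these formulas will be needed later when estimating $\mathring{\mathcal Q}(f)$ in a regime where $\|\mathring B\|$ is itself large, and one must track exactly which powers of $\|\mathring B\|$ multiply which powers of $f$ and its derivatives.
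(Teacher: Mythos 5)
Your proposal matches the paper's treatment: the lemma is simply a summary of the exact formulas \eqref{eqn:quadsecondff} and \eqref{eqn:quadmeancurv}, and the paper likewise obtains the constant, linear, and quadratic remainder parts by reading them off and regrouping, using that $\eta^{ij} = \mathcal O(|f|^2\|\mathring B\|^2)$ and $D^{-1}-1 = \mathcal O(\|\mathring\nabla f\|^2)$ so those contributions land in the remainder. Your bookkeeping, including placing $(D^{-1}-1)(\mathring H + \mathring\Delta f + \|\mathring B\|^2 f)$ in $\mathring{\mathcal Q}(f)$, is exactly the paper's decomposition.
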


The quadratic parts of both $\mathring B_f$ and $\mathring H_f$ are unwieldy, but only basic structural facts about
them are needed in the sequel. To simplify matters, we suppose that $| f| \| \mathring B \| + \| \mathring 
\nabla f \| \ll 1$, which will be justified later on.  Furthermore, both $\mathring B^{(2)}(f)$ and 
$\mathring{\mathcal Q}(f)$ can be expanded into a sum of terms which are each linear combinations 
of the coefficients of the tensor  $f^i \cdot (\mathring \nabla f)^{\otimes j} \otimes (\mathring \nabla^2 f 
)^{\otimes k} \otimes \mathring B^{\otimes l} \otimes (\mathring \nabla \mathring B)^{\otimes m}$, where 
$i, j, k, l, m$ are positive integers such that $k+m \leq 1$ and $i + 1 = k + l + 2 m$ (i.e.\ the number 
of times the function $f$ appears is smaller by one than the sum of the number of covariant derivatives 
and the number of occurrences of the second fundamental form).  Consequently the dominant terms in 
$\mathring B^{(2)}(f)$ and $\mathring{\mathcal Q}(f)$ are $\mathcal O(1)$ linear combinations of components of
\begin{equation*}
(\mathring \nabla f )^2 \! \otimes \! \mathring B, \quad 
f\, \mathring \nabla f \! \otimes \! \mathring B^{2}, \quad 
f\, \mathring \nabla f \! \otimes \! \mathring \nabla \mathring B, 
\quad f^2 \mathring B^3,  \quad f \, \mathring \nabla^2 f \! \otimes \! \mathring B
\quad \mbox{and} \quad (\mathring \nabla f)^2 \! \otimes \! \mathring \nabla^2 f \, .
\end{equation*}
The following estimates are now straightforward consequences of this discussion. 

\begin{lemma}
	\label{lemma:quadeuclexp}
	Assuming that $| f_i | \| \mathring B \| + \| \mathring \nabla f_i  \| \ll 1$ for $i = 1, 2$, the 
quadratic remainders in the second fundamental form and mean curvature satisfy 
	\begin{align*}
		| \mathring B^{(2)}(f_1) - \mathring B^{(2)}(f_2) | + |\mathring {\mathcal Q}(f_1) - \mathring {\mathcal Q}(f_2) | & \\
		&\hspace{-25ex} \leq C | f_1 - f_2 | \cdot \max_i \big(|f_i| \| \mathring B \|^3 + \|\mathring \nabla f_i \| \| \mathring B \|^2 +  \|\mathring \nabla f_i \| \| \mathring \nabla \mathring B \|  + \| \mathring \nabla^2 f_i \| \| \mathring B \| \big) \\
		&\hspace{-25ex} \qquad + C \| \mathring \nabla f_1 - \mathring \nabla f_2 \| \cdot \max_i \big(  | f_i | \| \mathring B \|^2 + \|\mathring \nabla f_i \| \| \mathring B \| + | f_i | \| \mathring \nabla \mathring B \| + \| \mathring \nabla^2 f_i \|  \big) \\
		&\hspace{-25ex} \qquad + C \| \mathring \nabla^2 f_1 - \mathring \nabla^2 f_2 \| \cdot \max_i \big( |f_i| \| \mathring B \| + \| \mathring \nabla f_i \| \big) \\
		&\hspace{-25ex} \qquad + C \| \mathring \nabla^2 f_1 - \mathring \nabla^2 f_2 \| \cdot \max_i \| \mathring \nabla f_i \|^2 + C \| \mathring \nabla f_1 - \mathring \nabla f_2 \| \cdot \max_i \| \mathring \nabla f_i \| \| \mathring \nabla^2 f_i \|
	\end{align*}
	where $C$ is independent of $f_1,f_2$ and $\| \mathring B \|$.
\end{lemma}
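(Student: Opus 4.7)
The proof is essentially a careful bookkeeping exercise using the structural description of $\mathring B^{(2)}(f)$ and $\mathring{\mathcal Q}(f)$ given immediately before the lemma. My plan is to start from the explicit formulas in the preceding lemma and argue that every term in $\mathring B^{(2)}(f)$ and $\mathring{\mathcal Q}(f)$ admits a convergent expansion as a sum of monomial-type tensor expressions of the form $f^{i}\cdot (\mathring \nabla f)^{\otimes j}\otimes(\mathring\nabla^2 f)^{\otimes k}\otimes \mathring B^{\otimes l}\otimes (\mathring\nabla \mathring B)^{\otimes m}$ with $k+m\le 1$ and $i+1=k+l+2m$. The denominators $D$ and $\beta^{ij}$, as well as the error $\eta^{ij}$, can each be expanded in a geometric series once the smallness condition $|f|\|\mathring B\| + \|\mathring\nabla f\|\ll 1$ is invoked; this justifies the monomial expansion and, after truncating at the first nontrivial order, leaves only the six leading types of monomials listed in the excerpt plus higher-order remainders that satisfy strictly better bounds than the ones we need.

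Once this monomial expansion is in place, the estimate for the difference reduces to a telescoping product identity: for any two collections $\{a_s(f_i)\}$ of factors (each being one of $f$, $\mathring\nabla f$, $\mathring\nabla^2 f$, $\mathring B$, or $\mathring\nabla \mathring B$) we have
\begin{equation*}
\prod_s a_s(f_1) - \prod_s a_s(f_2) = \sum_r\Bigl(\prod_{s<r}a_s(f_2)\Bigr)\bigl(a_r(f_1)-a_r(f_2)\bigr)\Bigl(\prod_{s>r}a_s(f_1)\Bigr).
\end{equation*}
Applied to each of the six dominant monomials, this produces precisely one term of the claimed form for each of the factors involving $f$: a factor $|f_1-f_2|$, $\|\mathring\nabla f_1-\mathring\nabla f_2\|$, or $\|\mathring\nabla^2 f_1-\mathring\nabla^2 f_2\|$, multiplied by the remaining factors evaluated at whichever of $f_1, f_2$ gives the largest value, which is absorbed into a $\max_i$. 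Matching each monomial to the line of the claimed inequality is then mechanical: $(\mathring\nabla f)^2\otimes\mathring B$ and $f\,\mathring\nabla f\otimes\mathring B^2$ contribute to lines one and two; $f\,\mathring\nabla f\otimes \mathring\nabla\mathring B$ and $f^2\mathring B^3$ contribute to the first two lines; $f\,\mathring\nabla^2 f\otimes\mathring B$ contributes to lines one and three; and $(\mathring\nabla f)^2\otimes\mathring\nabla^2 f$ produces exactly the two terms on the fourth line.

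The main obstacle, as I see it, is not any single estimate but rather the sheer bulk of the bookkeeping: making sure that every term of the explicit formula for $\mathring B^{(2)}(f)$ and $\mathring{\mathcal Q}(f)$ (including those hidden inside $D^{-1}$, $\beta^{ij}$, and $\eta^{ij}$) is accounted for, that the smallness hypothesis suffices to make every remainder strictly better than the stated bound, and that the higher-order monomials (with $i+j+k+l+m$ larger than for the six leading ones) do not contribute anything that is not already absorbed into the listed factors. A convenient way to keep this under control is to estimate $\eta^{ij}$ and $\beta^{ij}-\mathring h^{ij}$ independently in advance, reducing both $\mathring B^{(2)}(f)$ and $\mathring{\mathcal Q}(f)$ to finite polynomial expressions in $f$, $\mathring\nabla f$, $\mathring\nabla^2 f$, $\mathring B$, $\mathring\nabla\mathring B$ modulo a controlled remainder, and then to apply the telescoping identity only to this polynomial part. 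The constant $C$ comes out independent of $f_1,f_2$ and $\|\mathring B\|$ because the smallness hypothesis confines all expansion variables to a bounded region in which the geometric series converge uniformly.
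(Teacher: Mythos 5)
Your proposal is correct and follows essentially the same route as the paper, which treats the lemma as a direct consequence of the structural expansion of $\mathring B^{(2)}(f)$ and $\mathring{\mathcal Q}(f)$ into monomials in $f$, $\mathring\nabla f$, $\mathring\nabla^2 f$, $\mathring B$, $\mathring\nabla\mathring B$ obtained by expanding $D^{-1}$, $\beta^{ij}$ and $\eta^{ij}$ under the smallness hypothesis. Your telescoping identity for differences of products is just an explicit rendering of the "straightforward consequences" step the paper leaves to the reader.
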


\subsection{Mean Curvature Calculations for a Perturbed Background Metric}
\label{sec:perturbedcalcs}

Consider now a surface $\Sigma_f := \mathring \mu_f (\Sigma)$ deformed by the amount $f$ in the direction of
the Euclidean normal to $\Sigma$. Working again in a geodesic normal coordinate system centered at some
point of $\gamma$, we now decompose the mean curvature operator $f \mapsto H_f := H [\Sigma_f]$ as
\begin{equation}
	\label{eqn:meancurvfnexp}
	H_f = H + \mathcal L(f) + \mathcal Q(f) + \mathcal H(f)
\end{equation}
into constant, linear and quadratic remainder parts plus a small error term.

The key is to substitute the tangent vector fields $[E_f]_ {i} := \mathring h^{jk} \beta_{ij} E_k +  f_{,i} \mathring N $ of $\mathring \mu_{f} (\Sigma)$ (here $E_1, E_2$ are tangent vectors for $\Sigma$),  the Euclidean normal vector field $ \mathring N_{f} := \frac{1}{D} \big(  \mathring N - \beta^{kj} f_{,k}  E_j \big)$ of $\mathring \mu_{f} (\Sigma)$, and the position vector field $Y_ {f} := Y +  f \mathring N$ of $\mathring \mu_{f} (\Sigma)$ relative to the center of the normal coordinate chart, as well as the expressions for $\mathring H$ and $\mathring B$ into the formul\ae\ from Lemma \ref{prop:approxexpansions}.  
This yields
\begin{equation}
	\label{eqn:meancurvf}
	H = \big( 1 + R_1 (f) \big) \mathring H +  [R_2(f) ]_{ij} \mathring B^{ij} + \bar R( f) + \mathcal H(f)  \, ,
\end{equation}
where $R_1$, $R_2$ and $\bar R$ are first-order differential operators and $\mathcal H(f) := \mathcal H ( Y_f, \mathring B_f, \mathring N_f, [E_f]_1, [E_f]_2 )$. As before, the precise structure of these operators is not important, though we
still must estimate their dependence on $f$, $Y$ and $\mathring B$. 

First, by examining the expansions for $R_s$ in terms of $f$, $Y$ and $\mathring \nabla f$, and for $D$ and $\beta$ in terms of $f \mathring B$ and $\mathring \nabla f$, one deduces that $R_s$ has an expansion into constant, linear and quadratic remainder terms of the form $R_s(f) :=  R_s^{(0)} + R_s^{(1)}(f) + R_s^{(2)}(f)$  where
\begin{equation}
	\label{eqn:firstcurvexp}
	\begin{aligned}
		R_1^{(0)}(f) &:= \tfrac{1}{6} \riem (\mathring N, Y, \mathring N, Y) \\[0.5ex]
		R_1^{(1)}(f) &:= -  \tfrac{1}{3} \riem(\mathring N, Y, E_j , Y) \mathring h^{jk} f_{,k}   \\[0.5ex]
		[R_2^{(0)}(f)]_{ij}  &:= \tfrac{1}{6} \riem (E_i, Y, E_j, Y)   \\[0.5ex]
		R_2^{(1)}(f) &:= \tfrac{1}{3} \big( \riem (E_i, Y, E_j, \mathring N) - \riem (E_k, Y, E_i, Y) \mathring B^k_j \big) f + \tfrac{1}{3}  \riem (\mathring N, Y, E_i , Y) f_{,j} \\
		&\qquad  + \big(  \tfrac{1}{3}  \bar \nabla_Y \riem (E_i, Y, E_j, \mathring N)   + \tfrac{1}{12} \bar\nabla_{\mathring N} \riem (E_i, Y, E_j, Y) \big) f   \\[0.5ex]
		R_s^{(2)}(f) &:=  R^{(2)}_{s,0} (f, \mathring \nabla f, Y) \cdot R^{(2)}_{s,1} (f \mathring B, \mathring \nabla f)
	\end{aligned}
\end{equation}
for $s = 1, 2$.  Here $R^{(2)}_{s,0}$ is a sum of quadratic and higher expressions in the components of $f^k \, Y^{\otimes (n-k)} \otimes (\mathring \nabla f)^{\otimes l}$ for various $n \geq 2$, $k \leq n$ and $l \geq 0$ whose coefficients are bounded by curvature quantities, while $R^{(2)}_{s,1}$ can be expanded to any order in a power series in the components of $f \mathring B$ and $\mathring \nabla f$ whose coefficients are bounded by curvature quantities.   One finds a similar expansion for $\bar R (f )$ into constant, linear and quadratic remainder terms of the form $\bar R (f ) := \bar R^{(0)} + \bar R^{(1)}(f) + \bar R^{(2)}(f)$ where
\begin{equation}
	\label{eqn:secondcurvexp}
	\begin{aligned}
		\bar R^{(0)}(f) &:= - \tfrac{2}{3} \ric (Y, \mathring N) - \tfrac{1}{2} \bar \nabla_Y \ric(Y, \mathring N) + \tfrac{1}{12} \bar \nabla_{\mathring N} \ric(Y, Y)  - \tfrac{1}{6} \bar \nabla_{\mathring N} \riem ( \mathring N, Y, \mathring N, Y)  \\[0.5ex]
		\bar R^{(1)}(f) &:= - \big( \tfrac{2}{3} \ric (\mathring N, \mathring N) + \tfrac{1}{2}  \bar \nabla_Y \ric(\mathring N, \mathring N) + \tfrac{1}{2}  \bar \nabla_{\mathring N} \ric(Y, \mathring N)  - \tfrac{1}{6} \bar \nabla_{\mathring N} \ric(\mathring N, Y) \big) f \\
		&\qquad + \big( \tfrac{2}{3} \ric (Y, E_i) + \tfrac{1}{2}  \bar \nabla_Y \ric(Y,E_i) - \tfrac{1}{12} \bar \nabla_{E_i} \ric(Y, Y)\big)  \mathring h^{ij} f_{,j}   \\
		&\qquad + \big( \tfrac{1}{3} \bar \nabla_{\mathring N} \riem ( E_i , Y, \mathring N, Y) + \tfrac{1}{6} \bar \nabla_{E_i } \riem ( \mathring N, Y, \mathring N, Y) \big) \mathring h^{ij} f_{,j} \\[0.5ex]
		\bar R^{(2)} (f) &:= \bar R^{(2)}_0 (f,  \mathring \nabla f, Y) \cdot \bar R^{(2)}_1 (f \mathring B, \mathring \nabla f)
	\end{aligned}
\end{equation}
for $s=1,2$.  Here $\bar R^{(2)}_0 $  is a sum of quadratic and higher expressions in the components of $f^k \, Y^{\otimes (n-k)} \otimes (\mathring \nabla f)^{\otimes l}$ for various $n \geq 1$, $k \leq n$ and $l \geq 0$ whose coefficients are bounded by curvature quantities, while $\bar R^{(2)}_1$ can be expanded to any order in a power series in the components of $f \mathring B$ and $\mathring \nabla f$ whose coefficients are bounded by curvature quantities.   Therefore the following estimates hold.

\begin{lemma}
	\label{lemma:curvtermest}
The constant and linear parts of $R_s (f )$ for $s=1, 2$ and $\bar R (f )$ satisfy
\begin{align*}
	| R_s^{(0)}(f) | &\leq C \| Y \|^2 \\
	| R_s^{(1)}(f) | &\leq C  \| Y \|  \big( |f | + \|Y\| \| \mathring \nabla f \| \big) \\
	| \bar R^{(0)}(f) | &\leq C \| Y \| \\
	| \bar R^{(1)}(f) | &\leq C  \big( |f | + \|Y\| \| \mathring \nabla f \| \big) \, .
\end{align*}
The quadratic and higher parts of $R_s (f )$ for $s=1, 2$ and $\bar R (f )$ satisfy 
\begin{align*}
	| R_s^{(2)}(f_1) - R_s^{(2)}(f_2) | &\leq C \big(  | f_1 - f_2 |+ \| Y \| \| \mathring \nabla f_1 - \mathring \nabla f_2 \| \big) \cdot \max_{i} \big( | f_i |+ \| Y \| \| \mathring \nabla f_i \|\big) \\[1ex]
	| \bar{{R}}^{(2)}(f_1) - \bar{{R}}^{(2)}(f_2) | &\leq  C | f_1 - f_2| \cdot \max_i \| \mathring \nabla f_i \| + C \| \mathring \nabla f_1 - \mathring \nabla f_2 \| \cdot \max_i | f_i | \\[-0.5ex]
	&\qquad + C \big(  | f_1 - f_2 |+ \| Y \| \| \mathring \nabla f_1 - \mathring \nabla f_2 \| \big) \cdot \max_{i} \big( | f_i |+ \| Y \| \| \mathring \nabla f_i \|\big) \, .
\end{align*}
In these estimates, $C$ is a constant depending only on the curvature tensor of the ambient manifold at the center of the normal coordinate chart under consideration.
\end{lemma}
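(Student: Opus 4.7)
The plan is to verify all four families of inequalities by systematic inspection of the explicit expansions (\ref{eqn:firstcurvexp}) and (\ref{eqn:secondcurvexp}). These already exhibit $R_s$ and $\bar R$ as polynomial/analytic expressions in $f$, $\mathring\nabla f$, $Y$, and $f\mathring B$, with coefficients built from contractions of the ambient Riemann tensor and its covariant derivatives evaluated at the origin of the normal coordinate chart. Since this basepoint lies on $\gamma$ and the chart is of controlled size, these coefficients are uniformly bounded by a constant $C$ depending only on the ambient geometry.

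For the constant parts, each term in $R_s^{(0)}$ is a Riemann tensor contraction against two copies of $Y$ plus unit vectors, giving $|R_s^{(0)}(f)| \leq C\|Y\|^2$ by the standard multilinear estimate. For $\bar R^{(0)}$ the leading summand $-\tfrac{2}{3}\ric(Y,\mathring N)$ is linear in $Y$, and the remaining terms are quadratic or cubic in $Y$, hence dominated by $C\|Y\|$ on the bounded chart. For the linear parts I would examine each summand of $R_s^{(1)}(f)$ and $\bar R^{(1)}(f)$, counting the number of $Y$, $f$, and $\mathring\nabla f$ factors; every term then admits a bound of the form $C\|Y\||f|$, $C\|Y\|^2\|\mathring\nabla f\|$, or a higher-order analogue, which are all absorbed into the stated inequalities. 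In the one summand of $R_2^{(1)}$ that contains $\mathring B$, namely $\riem(E_k,Y,E_i,Y)\mathring B^k_j f$, I would invoke the standing assumption that $\|Y\|\|\mathring B\|$ is bounded on the configurations of interest (true for geodesic spheres of radius $r$ centered on $\gamma$, where $\|Y\|\sim r$ and $\|\mathring B\|\sim 1/r$).

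For the quadratic remainder parts I would use the product decomposition $R_s^{(2)}(f) = R_{s,0}^{(2)}(f,\mathring\nabla f,Y)\cdot R_{s,1}^{(2)}(f\mathring B,\mathring\nabla f)$ from (\ref{eqn:firstcurvexp}), and the analogous decomposition for $\bar R^{(2)}$. The second factor is a convergent power series in its arguments with curvature-bounded coefficients, so under the natural smallness hypothesis $|f|\|\mathring B\|+\|\mathring\nabla f\|\ll 1$ used throughout Section \ref{sec:euclideancalcs} it is uniformly bounded together with its derivatives. The first factor is a polynomial that is quadratic or higher in $(f,Y)$ with arbitrary polynomial dependence on $\mathring\nabla f$, so differencing it by the fundamental theorem of calculus gives bounds proportional to $|f_1-f_2|$ or $\|\mathring\nabla f_1-\mathring\nabla f_2\|$ times the maximum of the corresponding first-order quantities. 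Assembling the two factors via the product rule yields the stated Lipschitz inequality. The $\bar R^{(2)}$ estimate acquires the extra monomial contributions $C|f_1-f_2|\cdot\max\|\mathring\nabla f_i\|$ and $C\|\mathring\nabla f_1-\mathring\nabla f_2\|\cdot\max|f_i|$ precisely because the decomposition for $\bar R_0^{(2)}$ allows $n\geq 1$ (rather than $n\geq 2$), thereby admitting a degree-one monomial of the form $f\cdot\mathring\nabla f$ with no $Y$ factor.

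The main obstacle is purely organizational bookkeeping: one must enumerate all contributing terms and faithfully track the degrees in $f$, $\mathring\nabla f$, $Y$, and $\mathring B$, particularly to see that every occurrence of $\mathring B$ appears only in the combination $f\mathring B$ or inside the implicit expansion of $D^{-1}$ and $\beta^{ij}$. No substantive new idea beyond the elementary multilinear estimate for the Riemann tensor and a standard geometric-series estimate for those denominators is required.
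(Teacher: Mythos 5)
Your proposal is correct and takes essentially the same route as the paper, which offers no separate proof of Lemma \ref{lemma:curvtermest} but presents it as an immediate consequence of the structural expansions \eqref{eqn:firstcurvexp} and \eqref{eqn:secondcurvexp} via exactly the term-by-term degree counting, power-series bounds under $|f|\|\mathring B\| + \|\mathring\nabla f\| \ll 1$, and Lipschitz differencing that you describe, including your observation that the extra terms in the $\bar R^{(2)}$ estimate come from the $n \geq 1$ monomial $f\,\mathring\nabla f$ carrying no $Y$ factor. Your explicit handling of the $\riem(E_k, Y, E_i, Y)\mathring B^k_j\, f$ summand of $R_2^{(1)}$ by invoking boundedness of $\|Y\|\|\mathring B\|$ fills in a point the paper leaves implicit, and it is consistent with how the lemma is used downstream (the factor $(1 + \|Y\|\|\mathring B\|)$ in Lemma \ref{lemma:newfnlinest} and the bound $\zeta_r \|\mathring B\| \leq C$ established for the approximate solutions).
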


One now substitutes the expansions for $R(f)$ and $\bar R(f)$ along with the expansions for $\mathring H_{f}$ and $\mathring B_{f}$ in terms of $f$ into equation \eqref{eqn:meancurvf} and extracts the various parts.  That is, by performing these substitutions, one finds
\begin{equation}
	\label{eqn:pertquadmc}
	\begin{aligned}
		H &:= ( 1 + R_1^{(0)}) \mathring H + \bar R^{(0)} + R_2^{(0)} \cdot \mathring B^{(0)}\\
		\mathcal L (f) &:= ( 1 + R_1^{(0)}) \mathring {\mathcal L} (f) + \bar R^{(1)}(f) + R^{(0)} \cdot \mathring B^{(1)}(f) + R^{(1)}(f) \cdot \mathring B  \\
		\mathcal Q (f) &:= (1 + R(f)) \mathring {\mathcal Q} (f) + \bar{{R}}^{(2)} (f) +  R^{(1)}(f) \cdot \mathring B^{(1)}(f)  + R^{(1)}(f) \mathring {\mathcal L} (f) + R(f) \cdot \mathring B^{(2)}(f) 
	\end{aligned}
\end{equation} 
where $[R(f)]_{ij} := R_1(f) \mathring h_{ij} + [R_2(f)]_{ij}$. Moreover, the following estimates hold.

\begin{lemma}
	\label{lemma:newfnlinest}
	The operator $\mathcal L$ satisfies
	\begin{equation}
	\label{eqn:lindifest}
	\begin{aligned}
		| \mathcal L(u) - \mathring{\mathcal L}(u) | &\leq C (1 + \| Y \| \| \mathring B \|) (|u| + \|Y \|  \| \mathring \nabla u \| + \| Y \|^2 \| \mathring \nabla^2 u \|)	\end{aligned}  
\end{equation}
where $C$ is a constant depending only on the curvature tensor of the ambient manifold at the center of the normal coordinate chart under consideration.
\end{lemma}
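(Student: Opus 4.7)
The plan is to unpack the decomposition
\begin{equation*}
\mathcal L(f) = (1 + R_1^{(0)}) \mathring{\mathcal L}(f) + \bar R^{(1)}(f) + R^{(0)} \cdot \mathring B^{(1)}(f) + R^{(1)}(f) \cdot \mathring B
\end{equation*}
from the middle line of \eqref{eqn:pertquadmc}, subtract $\mathring{\mathcal L}(u)$, and estimate the four summands of
\begin{equation*}
\mathcal L(u) - \mathring{\mathcal L}(u) = R_1^{(0)}\,\mathring{\mathcal L}(u) + \bar R^{(1)}(u) + R^{(0)} \cdot \mathring B^{(1)}(u) + R^{(1)}(u) \cdot \mathring B
\end{equation*}
separately. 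No further expansion of the quadratic remainder $\mathcal Q$ or of $\mathring B^{(2)}$ is needed, since only the linear comparison is at issue.

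First I would feed in the curvature bounds from Lemma \ref{lemma:curvtermest}: $|R_1^{(0)}| \leq C\|Y\|^2$, $|R^{(0)}| \leq C\|Y\|^2$, $|\bar R^{(1)}(u)| \leq C(|u| + \|Y\|\|\mathring \nabla u\|)$, and $|R^{(1)}(u)| \leq C\|Y\|(|u| + \|Y\|\|\mathring \nabla u\|)$, where $R^{(k)} := R_1^{(k)} \mathring h + R_2^{(k)}$. Combined with the elementary pointwise bounds
\begin{equation*}
|\mathring{\mathcal L}(u)| \leq C \bigl( \|\mathring \nabla^2 u\| + \|\mathring B\|^2 |u|\bigr), \qquad |\mathring B^{(1)}(u)| \leq C\bigl(\|\mathring \nabla^2 u\| + \|\mathring B\|^2 |u|\bigr),
\end{equation*}
that follow immediately from the Euclidean formulas $\mathring{\mathcal L}(u) = \mathring \Delta u + \|\mathring B\|^2 u$ and $\mathring B^{(1)}(u)_{ij} = u_{;ij} - u \mathring B_i^k \mathring B_{kj}$, the four summands are dominated respectively by $C\|Y\|^2(\|\mathring \nabla^2 u\| + \|\mathring B\|^2 |u|)$, $C(|u| + \|Y\|\|\mathring \nabla u\|)$, $C\|Y\|^2(\|\mathring \nabla^2 u\| + \|\mathring B\|^2 |u|)$, and $C\|Y\|\|\mathring B\|(|u| + \|Y\|\|\mathring \nabla u\|)$.

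Now I would collect coefficients of $|u|$, $\|\mathring \nabla u\|$ and $\|\mathring \nabla^2 u\|$ and match against the target. The coefficient of $\|\mathring \nabla^2 u\|$ is $C\|Y\|^2 \leq C\|Y\|^2(1 + \|Y\|\|\mathring B\|)$; the coefficient of $\|\mathring \nabla u\|$ is $C\|Y\| + C\|Y\|^2\|\mathring B\| = C\|Y\|(1 + \|Y\|\|\mathring B\|)$; and the coefficient of $|u|$ is $C\bigl(1 + \|Y\|\|\mathring B\| + \|Y\|^2\|\mathring B\|^2\bigr)$, which must be controlled by $C(1 + \|Y\|\|\mathring B\|)$. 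This last inequality is where a uniform bound on $\|Y\|\|\mathring B\|$ enters: on the pieces of the approximate solutions of interest in the paper (geodesic spheres of radius $r$ and the catenoidal necks) one has $\|Y\| \lesssim r$ and $\|\mathring B\| \lesssim 1/r$, so $\|Y\|\|\mathring B\|$ is uniformly bounded and the quadratic term is absorbed into the linear one.

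The main obstacle is exactly this last rearrangement; the rest of the argument is pure bookkeeping of Lemma \ref{lemma:curvtermest} combined with the Euclidean formulas. The only genuine input beyond a term-by-term estimate is the implicit normalization $\|Y\|\|\mathring B\| = O(1)$ that holds on the approximate surfaces built in Section \ref{sec:approxsol}; without this, the bound \eqref{eqn:lindifest} would carry $(1+\|Y\|\|\mathring B\|)^2$ rather than $(1+\|Y\|\|\mathring B\|)$ on the right-hand side.
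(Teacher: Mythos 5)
Your proof is correct and follows exactly the route the paper leaves implicit: decompose $\mathcal L(u) - \mathring{\mathcal L}(u)$ via \eqref{eqn:pertquadmc} and feed in Lemma \ref{lemma:curvtermest} together with the Euclidean formulas for $\mathring{\mathcal L}$ and $\mathring B^{(1)}$. Your caveat about the $|u|$-coefficient is well taken: the term-by-term bound does produce a $(\|Y\|\|\mathring B\|)^2|u|$ contribution, and absorbing it into $(1+\|Y\|\|\mathring B\|)$ is legitimate precisely because the lemma is only ever invoked on the approximate solutions, where \eqref{eqn:bfestimate} together with $\|Y\| \asymp \zeta_r$ gives $\|Y\|\|\mathring B\| = O(1)$.
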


\begin{lemma}
	\label{lemma:newfnquadest}
	Under the assumption that $| f_i| \| \mathring B \| + \| \mathring \nabla f_i \| \ll 1$ for $i = 1, 2$, the quadratic remainder $\mathcal Q (f_i)$ satisfies
\begin{align*}
		 | \mathcal Q(f_1) -\mathcal Q(f_2) | & \leq C | f_1 - f_2 | \cdot \max_i \big(|f_i| \| \mathring B \|^3 + \|\mathring \nabla f_i \| \| \mathring B \|^2 +  \|\mathring \nabla f_i \| \| \mathring \nabla \mathring B \|  + \| \mathring \nabla^2 f_i \| \| \mathring B \| \big) \\
		&\qquad + C \| \mathring \nabla f_1 - \mathring \nabla f_2 \| \cdot \max_i \big(  | f_i | \| \mathring B \|^2 + \|\mathring \nabla f_i \| \| \mathring B \| + | f_i | \| \mathring \nabla \mathring B \| + \| \mathring \nabla^2 f_i \|  \big) \\
		&\qquad + C \| \mathring \nabla^2 f_1 - \mathring \nabla^2 f_2 \| \cdot \max_i \big( |f_i| \| \mathring B \| + \| \mathring \nabla f_i \| \big) \\
		&\qquad + C \| \mathring \nabla^2 f_1 - \mathring \nabla^2 f_2 \| \cdot \max_i \| \mathring \nabla f_i \|^2 + C \| \mathring \nabla f_1 - \mathring \nabla f_2 \| \cdot \max_i \| \mathring \nabla f_1 \| \| \mathring \nabla^2 f_i \| \\
		&\qquad +C | f_1 - f_2| \cdot \max_i \| \mathring \nabla f_i \| + C \| \mathring \nabla f_1 - \mathring \nabla f_2 \| \cdot \max_i | f_i | \\
		&\qquad + C \big(  | f_1 - f_2 |+ \| Y \| \| \mathring \nabla f_1 - \mathring \nabla f_2 \| \big) \cdot \max_{i} \big( | f_i |+ \| Y \| \| \mathring \nabla f_i \|\big) \, .
\end{align*}
In these estimates, $C$ is a constant depending only on the curvature tensor of the ambient manifold at the center of the normal coordinate chart under consideration.
\end{lemma}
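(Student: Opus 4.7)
\textbf{Proof proposal for Lemma \ref{lemma:newfnquadest}.} The plan is to use the explicit decomposition of $\mathcal{Q}(f)$ from \eqref{eqn:pertquadmc}, which exhibits $\mathcal{Q}$ as a sum of five terms each of which is either a pure curvature expression ($\bar R^{(2)}$) or a product of one factor depending on $f$ through the curvature-correction operators $R_s^{(i)}$, $\bar R^{(i)}$ with another factor depending on $f$ through the Euclidean operators $\mathring B^{(i)}$, $\mathring{\mathcal L}$, $\mathring{\mathcal Q}$. The strategy is to apply the bilinear identity
$$A(f_1)B(f_1) - A(f_2)B(f_2) = \bigl(A(f_1) - A(f_2)\bigr) B(f_1) + A(f_2) \bigl(B(f_1) - B(f_2)\bigr)$$
to each product and to estimate each of the resulting factors using the two input lemmas already established, namely Lemma \ref{lemma:quadeuclexp} for the Euclidean pieces and Lemma \ref{lemma:curvtermest} for the curvature pieces.

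First I would dispatch the two ``clean'' pieces. The term $(1 + R(f))\,\mathring{\mathcal Q}(f)$ contributes the Euclidean difference $\mathring{\mathcal Q}(f_1) - \mathring{\mathcal Q}(f_2)$ (multiplied by the factor $1+R(f_1)$ which is uniformly bounded in the regime $|f_i|\|\mathring B\| + \|\mathring\nabla f_i\| \ll 1$), plus a piece of the form $(R(f_1)-R(f_2))\mathring{\mathcal Q}(f_2)$; the former is handled directly by Lemma \ref{lemma:quadeuclexp} and yields the first four lines of the target estimate, while the latter is a product of two quadratic quantities and is absorbed. The pure curvature term $\bar R^{(2)}(f_1) - \bar R^{(2)}(f_2)$ is handled directly by the second part of Lemma \ref{lemma:curvtermest} and contributes the last two lines of the target estimate.

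Next I would handle the three ``mixed'' products $R^{(1)}(f)\cdot\mathring B^{(1)}(f)$, $R^{(1)}(f)\mathring{\mathcal L}(f)$, and $R(f)\cdot \mathring B^{(2)}(f)$ by the same bilinear trick. The differences $R^{(1)}(f_1) - R^{(1)}(f_2)$ and $R(f_1) - R(f_2)$ are linear in $f_1 - f_2$ and $\mathring\nabla f_1 - \mathring\nabla f_2$ with coefficients controlled by $\|Y\|$ times curvature, by the explicit formulas \eqref{eqn:firstcurvexp}. The factors $\mathring B^{(1)}(f)$ and $\mathring{\mathcal L}(f)$ are linear first/second-order differential operators in $f$ with coefficients $\mathring B$, $\mathring B^2$, so their magnitudes and Lipschitz constants are $\mathcal O(|f|\|\mathring B\|^2 + \|\mathring\nabla^2 f\|+\cdots)$; the quadratic factor $\mathring B^{(2)}(f)$ is controlled by Lemma \ref{lemma:quadeuclexp} (taking $f_2=0$ gives an absolute bound). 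Multiplying out and using the smallness hypothesis to absorb subdominant factors, each of these three products produces only terms that are already present on the right hand side of the target estimate, so no new schematic terms appear.

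The main obstacle is purely combinatorial rather than conceptual: one must verify that every single monomial that arises when the bilinear identities are expanded out across all five summands of $\mathcal Q$ is indeed dominated by one of the six explicit schematic terms listed in the conclusion, under the smallness assumption $|f_i|\|\mathring B\|+\|\mathring\nabla f_i\|\ll 1$. This is where the careful tracking of the structure of $\mathring B^{(2)}$ and $\mathring{\mathcal Q}$ in terms of monomials $f^i (\mathring\nabla f)^{\otimes j} (\mathring\nabla^2 f)^{\otimes k} \mathring B^{\otimes l}(\mathring\nabla\mathring B)^{\otimes m}$ with $k+m\le 1$ and $i+1 = k+l+2m$, already noted in the discussion preceding Lemma \ref{lemma:quadeuclexp}, is essential: it ensures the correct homogeneity so that products of a curvature-correction factor (order $\|Y\|$ or higher) with a Euclidean linear operator do not produce terms worse than the ones appearing in Lemma \ref{lemma:quadeuclexp}. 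Once this bookkeeping is done, collecting terms yields precisely the stated inequality.
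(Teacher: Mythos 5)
Your proposal is correct and follows essentially the same route as the paper: the paper states Lemma \ref{lemma:newfnquadest} as an immediate consequence of the decomposition \eqref{eqn:pertquadmc} combined with Lemma \ref{lemma:quadeuclexp} for the Euclidean pieces and Lemma \ref{lemma:curvtermest} for the curvature-correction pieces, which is precisely the bilinear-difference bookkeeping you describe. The only difference is that you spell out the term-by-term absorption explicitly, whereas the paper leaves it implicit.
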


\noindent Finally, further straightforward calculation leads to the remaining estimate for $\mathcal H$. 

\begin{lemma}
	\label{lemma:newfnerrorest}
	Under the assumption that $| f| \| \mathring B \| + \| \mathring \nabla f \| \ll 1$ then the error term $\mathcal H(f)$ satisfies
	$$| \mathcal H(f_1) -   \mathcal H(f_2)  | \leq C  \| Y \|^2  (1 + \| Y \| \| \mathring B \|)  \big(|f_1 - f_2| +  \|Y \| \| \mathring \nabla f_1 - \mathring \nabla f_2 \| + \|Y \|^2 \| \mathring \nabla^2 f_1 - \mathring \nabla^2 f_2 \| \big)$$
	where $C$ is a constant depending only on the curvature tensor of the ambient manifold at the center of the normal coordinate chart under consideration.
\end{lemma}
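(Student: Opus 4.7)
The plan is to write $\mathcal H(f) = \mathcal H(Y_f, \mathring B_f, \mathring N_f, [E_f]_1, [E_f]_2)$ as the composition of the raw error term $\mathcal H$ from Lemma \ref{prop:approxexpansions}, which satisfies the pointwise bound $|\mathcal H(Y, \mathring B, \mathring N, E_1, E_2)| \leq C\|Y\|^3(1 + \|Y\|\|\mathring B\|)$, with the $f$-dependent geometric data, and then to differentiate this composition along the linear segment $f_s := (1-s) f_2 + s f_1$.

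First I would record the explicit $f$-dependence of the deformed quantities from Section \ref{sec:perturbedcalcs}, namely $Y_f = Y + f \mathring N$, $[E_f]_i = \mathring h^{jk}\beta_{ij} E_k + f_{,i} \mathring N$, $\mathring N_f = D^{-1}(\mathring N - \beta^{ij} f_{,i} E_j)$, and $\mathring B_f$ as in \eqref{eqn:quadsecondff}. Under the smallness hypothesis, $D$ and $\beta$ stay uniformly close to $1$ and $\mathring h$, so differentiating in $s$ and absorbing subleading factors using $|f_i|\|\mathring B\| + \|\mathring \nabla f_i\| \ll 1$ gives
\begin{align*}
\bigl|\tfrac{d}{ds} Y_{f_s}\bigr| &\leq C |f_1 - f_2|, \\
\bigl|\tfrac{d}{ds} \mathring N_{f_s}\bigr| + \bigl|\tfrac{d}{ds} [E_{f_s}]_i\bigr| &\leq C \|\mathring \nabla(f_1 - f_2)\|, \\
\bigl|\tfrac{d}{ds} \mathring B_{f_s}\bigr| &\leq C \bigl( |f_1-f_2|\|\mathring B\|^2 + \|\mathring\nabla(f_1-f_2)\|\|\mathring B\| + \|\mathring\nabla^2(f_1-f_2)\| \bigr).
\end{align*}

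Next I need the Lipschitz bound on $\mathcal H$ in each argument. Tracing through the derivation of Lemma \ref{prop:approxexpansions} from the expansion \eqref{eqn:metricexp} and the formulas of Lemma \ref{prop:expansions} shows that $\mathcal H$ is a finite sum of monomials that are polynomial in $\mathring N$, $E_i$, and $\mathring B$, with coefficients that are smooth in $Y$ and vanish to order at least $3$ in $\|Y\|$, and whose $\mathring B$-degree is at most $1$ except in terms carrying a compensating extra power of $\|Y\|$ (this is exactly the structural content of the stated bound $\|Y\|^3(1+\|Y\|\|\mathring B\|)$). From this one reads off
\[
|\partial_Y \mathcal H| \leq C \|Y\|^2 (1 + \|Y\|\|\mathring B\|), \qquad |\partial_{\mathring N} \mathcal H| + |\partial_{E_i} \mathcal H| \leq C \|Y\|^3 (1 + \|Y\|\|\mathring B\|), \qquad |\partial_{\mathring B} \mathcal H| \leq C \|Y\|^4,
\]
valid uniformly on the bounded neighbourhood where the $f_s$-deformed quantities live.

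Finally, $\mathcal H(f_1) - \mathcal H(f_2) = \int_0^1 \frac{d}{ds}\mathcal H(f_s)\,ds$ is estimated by the chain rule, pairing each partial derivative of $\mathcal H$ with the corresponding $s$-derivative of its argument. The $Y_f$-contribution produces the $\|Y\|^2(1+\|Y\|\|\mathring B\|)|f_1-f_2|$ term; the $\mathring N_f$ and $[E_f]_i$ contributions produce the $\|Y\|^3(1+\|Y\|\|\mathring B\|)\|\mathring\nabla(f_1-f_2)\|$ term; and the $\mathring B_f$-contribution produces the $\|Y\|^4\|\mathring\nabla^2(f_1-f_2)\|$ term, with the other two subterms in $\frac{d}{ds}\mathring B_{f_s}$ absorbed into the first two output terms via the smallness hypothesis (since their extra factors of $\|\mathring B\|$ and $\|\mathring B\|^2$ are dominated by $1/\|Y\|$ and $1/\|Y\|^2$ on the relevant scales). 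Assembly gives exactly the claimed bound. The main obstacle is not analytic but notational: verifying the structural claim about $\mathcal H$ by inspecting the remainders in Lemma \ref{prop:expansions} and checking that no monomial of lower $Y$-weight or higher $\mathring B$-weight can leak through the normalisations of $\mathring N_f$, $[E_f]_i$, and $\mathring B_f$. Once this structural fact is in hand, the estimate reduces to a routine chain-rule bookkeeping.
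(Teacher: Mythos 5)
The paper does not actually supply an argument here --- Lemma \ref{lemma:newfnerrorest} is introduced with the single remark that ``further straightforward calculation leads to the remaining estimate for $\mathcal H$'' --- so there is no line-by-line proof to compare against; your chain-rule computation along $f_s=(1-s)f_2+sf_1$, using the structural form of the remainder $\mathcal H(Y,\mathring B,\mathring N,E_1,E_2)$ from Lemma \ref{prop:approxexpansions} together with the explicit $f$-dependence of $Y_f$, $\mathring N_f$, $[E_f]_i$ and $\mathring B_f$ from Section \ref{sec:perturbedcalcs}, is exactly the computation the authors are alluding to, and it is essentially correct. It is also consistent with how the neighbouring Lemmas \ref{lemma:curvtermest}--\ref{lemma:newfnquadest} are organised (constant/linear/quadratic structure plus Lipschitz bounds under the same smallness hypothesis).

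One small point deserves care. Your bound $|\tfrac{d}{ds}[E_{f_s}]_i|\leq C\|\mathring\nabla(f_1-f_2)\|$ drops the term $-(f_1-f_2)\,\mathring h^{jk}\mathring B_{ij}E_k$ coming from $\beta_{ij}=\mathring h_{ij}-f\mathring B_{ij}$, and similarly $\tfrac{d}{ds}\mathring N_{f_s}$ and $\tfrac{d}{ds}\mathring B_{f_s}$ contain pieces of size $|f_1-f_2|\,\|\mathring B\|$ and $|f_1-f_2|\,\|\mathring B\|^2$. Feeding these into your partial-derivative bounds produces contributions of the type $\|Y\|^4\|\mathring B\|^2\,|f_1-f_2|$, which sit inside the stated right-hand side only after one knows $\|Y\|\,\|\mathring B\|\leq C$ (otherwise one gets a factor $(1+\|Y\|\|\mathring B\|)^2$ rather than $(1+\|Y\|\|\mathring B\|)$). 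You do invoke this --- ``$\|\mathring B\|$ dominated by $1/\|Y\|$ on the relevant scales'' --- but note that it does not follow from the stated hypothesis $|f|\|\mathring B\|+\|\mathring\nabla f\|\ll 1$; it is instead a property of the surfaces to which the lemma is later applied, where $\zeta_r\|\mathring B\|\leq C$ by \eqref{eqn:bfestimate} and $\zeta_r$ is comparable to $\|Y\|$ (as used in the proof of Proposition \ref{prop:nonlinest}). If you want the lemma exactly as stated, either add this as a standing assumption or track those terms and observe that they carry the compensating small factors; as written it is a minor presentational gap, not a flaw in the method.
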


\section{The Approximate Solutions}
\label{sec:approxsol}
We now construct two families of approximate CMC surfaces.  The first family consists of finite-length surfaces 
invariant under the reflection $t \mapsto -t$ constructed by gluing together $K$ small geodesic spheres of radius 
$r$ along the $(t, 0, 0)$ geodesic with small interpolating necks. Here $K$ is approximately $1/r$ so that 
the surface fills out a region along $\gamma$ of bounded length which does not tend to $0$ with $r$.  
The second family consists of one-ended surfaces in an asymptotically flat Riemannian manifold.  These are 
constructed by taking a configuration of $K$ spheres as above and then attaching a semi-infinite 
Delaunay surface to the last sphere. These two families are denoted $\apsolf$ and $\apsoloe$, respectively.
These depend on parameters $\sigma_1, \sigma_2, \ldots$ and $\delta_1, \delta_2, \ldots$ which govern the 
precise locations of the component spheres and necks. For brevity, we often just write $\apsol$ for either
family when the context is clear or does not matter. 

\begin{rmk} Starting from this point, our presentation will be phrased in terms of two-dimensional surfaces $\Sigma$ contained in a three-dimensional Riemannian manifold $M$.  This is done for the purpose of simplicity; however, everything that follows can be easily adapted to the $(n+1)$-dimensional setting.  
\end{rmk}

\subsection{The Finite-Length Surface} 

Let $\gamma$ be the $(t, 0, 0)$ geodesic and (with slight abuse of notation) also the arc-length parametrization of this geodesic given by $\gamma(t) := (t, 0, 0)$.  Introduce the small radius $r$ and the (much smaller) separation parameters $\sigma_{k}$.  Let $t_0 = 0$ and $t_k := 2 k r + \sum_{l=0}^{k-1} (\sigma_l )$ and let $p_{\pm k} := (\pm t_k, 0, 0)$.  The geodesic spheres that will be used in the gluing construction are $S_{\pm k} := \partial B_r( p_{\pm k})$. Also let $p_{k}^\flat := \gamma( t_k + r + \sigma_{k}/2)$ be the point half-way between $S_k$ and $S_{k+1}$ and let $p_{k}^\pm := \gamma( t_{k} \pm r)$ be the points in $S_k \cap \gamma$ .  Define $p_{-k}^\pm$ and $p_{-k}^\flat$ in a symmetrical manner.  In the definitions above, the index $k$ ranges from zero to $K$.  

The construction of the first surface consists of three steps.  The first step is to replace each $S_{k}$ with $\tilde S_k$ which is obtained from $S_k \setminus \{ p_k^+, p_k^-\}$ or $S_0$ by a small normal perturbation designed to make $S_{k}$ look more like a catenoid near $p_k^\pm$.  The next step is to find the truncated and rescaled catenoids that fit optimally into the space between $\tilde S_k$ and its neighbours, the precise location of which is governed by displacement parameters $\delta_k$.  The final step is to use cut-off functions to glue each $\tilde S_k$ smoothly to its neighbouring necks. The result of this process will be a family of surfaces that depends on $r$ and the parameters  $\sigma_1, \sigma_2, \ldots$ and $\delta_1, \delta_2, \ldots$.   A number of additional small parameters $\eps_k^\pm$ and $\eps_k$ will be introduced below and it will be shown how these depend on the $\sigma$ and $\delta$.  Denote by $\eps := \max_k \{ \eps_k, \eps_k^+, \eps_k^- \}$ below and in the rest of the paper.

\paragraph*{Step 1.}  Let ${\mathcal L}_S := \mathring \Delta + \| \mathring B \|^2$ be the linearized mean curvature operator of $S_k$ with respect to the Euclidean metric and let $J_k : S_k \rightarrow \R$ be the smooth function in the kernel of ${\mathcal L}_S$ that is cylindrically symmetric with respect to the axis defined by the geodesic $\gamma$ and normalized to have unit $L^2$-norm.  (It is defined by taking the correct multiple of the Euclidean normal component of the translation vector field $\frac{\partial }{\partial t}$.) 

Now introduce small positive scale parameters $\eps_k^{\, \pm}$ that have yet to be determined and define $G_k : S_k \setminus \{ p_k^\pm \} \rightarrow \R$ for $k = 0, \ldots, K-1$ as the unique solution of the equation
\begin{subequations}
\label{eqn:green}
\begin{equation}
	\label{eqn:greenone}
	{\mathcal L}_S (G_k) = \eps_k^{\, +} \delta_+ +  \eps_k^{\, -} \delta_{-} + A_k J_k
\end{equation}
where $\delta_\pm$ denotes the Dirac $\delta$-function at $p_k^\pm$ and the real number $A_k$ is chosen to ensure that the right hand side of \eqref{eqn:greenone} is $L^2$-orthogonal to $J_k$.  Also, let $G'_0$ be the unique solution of the equation 
\begin{equation}
	\label{eqn:greentwo}
	{\mathcal L}_S (G_K') = \eps_K^{\, -} \delta_{-} + A_K' J_K
\end{equation}
\end{subequations}
where $A_K'$ is chosen to ensure that the right hand side of \eqref{eqn:greentwo} is $L^2$-orthogonal to $J_K$. Note that $|A_k| \leq C \eps$ for $k = 1, \ldots, N$ and $A_0 = 0$ by symmetry.

To complete this step, introduce another small radius parameter $r_k$ yet to be determined, and define $\tilde S_k$ as the Euclidean normal graph over $S_k \setminus \big[ B_{r_k/2}(p_k^+) \cup B_{r_k/2}(p_k^-) \big]$ that is generated by the function $r G_k$.  Also define the terminal sphere $\tilde S_{K}'$ as the Euclidean normal graph over $S_{K} \setminus  B_{r_K/2} (p_{K}^-) $ that is generated by the function $r G_{K}'$ as well as its symmetrical counterpart under the $t \mapsto - t$ symmetry.

\paragraph*{Step 2.} Coordinatize a neighbourhood of $p_k^\flat$ using geodesic normal coordinates centered at $p_k^\flat$ and scaled by a factor of $r$.   Let the coordinate map be $\psi_k : B_{R} (p_k^\flat) \subseteq M \rightarrow (-R',R') \times B_{R'}(0) \subseteq \R \times \R^2$, where $R, R'$ are appropriate radii (one should think of $R = O(r)$ and $R' = \mathcal O(1)$).  Note that the $\R$ coordinate corresponds to a translation of the scaled arc-length coordinate along $\gamma$ and $\gamma$ itself maps to the curve $x^0 \mapsto (x^0,0)$ with $p_k^\flat$ mapping to the origin.  The gluing procedure that will now be described applies to any pair of perturbed spheres $\tilde S_k, \tilde S_{k+1}$, including the last pair $\tilde S_{K-1}, \tilde S_K$. The images of $\tilde S_{k+1}$ and $\tilde S_{k}$ under the coordinate map, at least near the origin, can be represented as graphs over the $\R^2$ factor of the form $ \{ (x^0, x) : x^0 = F_{\mathit{sph}}^\pm(x) \}$ where $+$ and $-$ correspond to $\psi_k(\tilde S_{k+1})$ and $\psi_k(\tilde S_{k})$ respectively.  One can check that the Taylor series expansions for $G_k$ near $p_k^\pm$ and for $\psi_k$ imply corresponding expansions for $F_{\mathit{sph}}^\pm$ near $x =0$, which results in the fact that $\psi_k(\tilde S_{k+1})$ is the set of points
$$x^0  = F_{\mathit{sph}}^-(x) := \frac{\sigma_k}{2 r} + \eps_{k+1}^{\, -}  \big( c_{k+1}^- + C_{k+1}^- \log (\| x \|) \big) + \mathcal O(\|x\|^2) + \mathcal O(\eps \|x\|^2)$$
while $\psi_k(\tilde S_{k+1})$ is the set of points
$$x^0 = F_{\mathit{sph}}^+( x) := -\frac{\sigma_k}{2 r} - \eps_{k}^{\, +} \big( c_{k}^+ + C_{k}^+ \log (\| x \|) \big) + \mathcal O(\|x\|^2) + \mathcal O(\eps \|x\|^2) \, .$$  
Here, $c_k^\pm$ and $C_k^\pm$ are constants.
 
The interpolation between $\psi_k(\tilde S_{k+1})$ and $\psi_k(\tilde S_{k})$, will be done using a standard catenoid that has been scaled by a factor of $\eps_k$ and translated by a small amount along its axis.  The $x^0 > 0$ end of such catenoid is given by
\begin{align*}
	x^0 = F_{\mathit{neck}}^+(\eps_k, d_k ; x) :=&\; \eps_k \arccosh(  \|x \| / \eps_k) + \eps_k d_k \\
	=&\; \eps_k  \big( \log(2) - \log(\eps_k ) \big) + \eps_k \log( \| x \|) + \eps_k d_k + \mathcal O( \eps_k^3 / \|x\|^2)
\end{align*}
near the origin, where $d_k$ is the translation parameter. The $x^0 < 0$ end is given by
\begin{align*}
x^0 = F_{\mathit{neck}}^-(\eps_k, d_k ; x)  :=&\; - \eps_k \arccosh(  \|x \| / \eps_k) + \eps_k d_k \\
	=&\; - \eps_k \big( \log(2) - \log(\eps_k) \big) - \eps_k \log( \| x \|) + \eps_k d_k + \mathcal O( \eps_k^3 / \|x\|^2)
\end{align*}
near the origin.  Optimal matching of these asymptotic expansions of the catenoid with those of $\psi_k(\tilde S_{k+1})$ and $\psi_k(\tilde S_{k})$ given above then requires
\begin{gather}
	\eps_k^{\, +} = \frac{\eps_k}{C_k^+} \qquad \quad
	\eps_{k+1}^{\, -} = \frac{\eps_k}{C_{k+1}^-} \qquad \quad
	d_k = \frac{1}{2} \left( \frac{c_{k+1}^-}{C_{k+1}^-} - \frac{c_{k}^+}{C_{k}^+}\right) \notag \\
	\intertext{and}
	\sigma_k = \Lambda_k(\eps_k) := r \eps_k \left( 2 \big( \log(2) - \log(\eps_k) \big) -  \frac{c_{k+1}^-}{C_{k+1}^-} - \frac{c_{k}^+}{C_{k}^+} \right) \, . 	\label{eqn:neckseprel}
\end{gather}
These equations imply that once the spacing $\sigma_k$ between $\tilde S_{k+1}$ and $\tilde S_{k}$ has been decided upon, then one can determine $\eps_k$ by inverting the equation $\sigma_k = \Lambda_k (\eps_k)$, and then the other parameters describing the optimally matched neck can be computed from $\eps_k$.  

Finally, observe that the matching between $\psi_k(\tilde S_{k+1})$ and $\psi_k(\tilde S_{k})$ and the neck defined by the choice of parameters above is \emph{most} optimal in the region of $x$ where the error quantity $\mathcal O(\| x \|^2) + \mathcal O( \eps_k^3 / \|x\|^2)$ is smallest.  It is easy to check that this occurs when $\|x \| = \mathcal O (\eps_k^{3/4})$.  Hence the radii $r_k$ in Step 1 should be chosen as $r_k := r \eps_k^{3/4}$, where the factor of $r$ takes the scaling into account. 
 
\paragraph*{Step 3.} Choose $d_k$, $\eps_k^\pm$ and $\eps_k$ as above.  Also, introduce displacement parameters $\delta_1, \delta_2, \ldots$ that serve to slightly displace the necks from their optimal locations.  Define a smooth, monotone cut-off function $\chi: [0,\infty) \rightarrow [0,1]$ which equals one in $[0,\frac{1}{2}]$ and vanishes outside $[0,1]$. Define the functions $\tilde F^\pm : B_1(0) \rightarrow \R$ by
\begin{equation}
	\label{eqn:interpfn}
	\tilde F^\pm(x)  := \chi (\| x \| / \eps_k^{3/4}) F_{\mathit{neck}}^\pm (\eps_k, d_k + \delta_k; x) + \big( 1 - \chi(\| x \| / \eps_k^{3/4}) \big) F_{\mathit{sph}}^\pm (x) \, .
\end{equation}
Now define the catenoidal interpolation between  $\psi_k(\tilde S_{k+1})$ and $\psi_k(\tilde S_{k})$ as
$$\tilde N_k := \{ (\tilde F^+(x), x) : \eps_k \leq \|x\| \leq \eps_k^{3/4} \} \cup  \{ (\tilde F^-(x), x) : \eps_k \leq \|x\| \leq \eps_k^{3/4} \} \, .$$
Finally, one can define the finite-length approximate solutions as follows.  

\begin{defn}
	Let $K$ be given.  The finite-length surface with parameters $\sigma := \{ \sigma_1, \ldots, \sigma_K \}$ and $\delta := \{ \delta_1, \ldots, \delta_K\}$ is the surface given by
\begin{align*}
	\apsolf &:=  \Big[ \tilde S_0 \setminus \big[B_R(p_0^+) \cup B_R(p_0^-) \big] \Big] \\
	&\qquad \cup \left[ \bigcup_{k=1}^{K-1} \tilde S_k \setminus \big[B_R(p_k^+) \cup B_R(p_k^-) \big]  \right] \cup \Big[ \tilde S_K \setminus B_R(p_K^-) \Big] \cup \left[ \bigcup_{\substack{k=0}}^{K-1} \psi_k^{-1} (\tilde N_k) \right]  \\
	&\qquad \cup \pi \left( \left[ \bigcup_{k=1}^{K-1} \tilde S_k \setminus \big[B_R(p_k^+) \cup B_R(p_k^-) \big]  \right] \cup \Big[ \tilde S_K \setminus B_R(p_K^-) \Big]\cup \left[ \bigcup_{\substack{k=0}}^{K-1} \psi_k^{-1} (\tilde N_k) \right] \right) \, .
\end{align*}
where $\pi$ is the $t \mapsto -t$ reflection.
\end{defn}

\subsection{The One-Ended Surface}
The one-ended family of approximate solutions is constructed by attaching a half Delaunay surface to a finite-length 
surface as constructed above. This Delaunay surface has very small necksize parameter, so we will need the
detailed analysis of these from \cite{mazzeopacardends}. 

\paragraph*{Step 1.} Let $K$ be a large integer.  Exactly as in the previous section, construct $K$ perturbed spheres of the form $ \tilde {S}_k$ for $k = 1, \ldots, K$ and one terminal perturbed sphere of the form $\tilde {S}_0'$ (which in this case is a normal graph over $S_0 \setminus B_{r_0}( p_0^+) $), along with perturbed necks $ \psi_k^{-1} (\tilde N_k)$ for $k = 0, \ldots, K-1 $.  Then glue these building blocks together using cut-off functions and matched asymptotics again as before.     This construction should be equipped with the appropriate separation and displacement parameters $\sigma_0, \ldots \sigma_{K-1}$ and $\delta_0, \ldots, \delta_{K-1}$.

\paragraph*{Step 2.} The standard Delaunay surface of mean curvature 2 and with the appropriate small neck radius can be rescaled by a factor of $r$ and translated along the geodesic $\gamma$ until there is overlap with the last perturbed sphere $\tilde {S}_K $ of the construction of Step 1.  Optimal overlap can be achieved because the neck region of this Delaunay surface is to first approximation a standard catenoid. 

To be a bit more precise with this idea, one proceeds as follows. First recall that Delaunay surfaces of mean curvature 2 are the surfaces of revolution generated by the functions $\rho_T : \R \rightarrow \R$ studied in \cite[\S 3]{mazzeopacardends}.  These functions are periodic with period $T$ and have a local minima at the integer multiples of $T$.   Introduce the parameters $d_K$ (which will be fixed once and for all below), and $\delta_K$ and $\sigma_K$ (which remain free). Define $T := 2  + \sigma_K / r$ and $\eps_K := \rho_T(0)$ and $T_K := t_K + r(d_K + \delta_K)$ where $t_K$ is the arc-length parameter for the center of the perturbed sphere $\tilde S_K$.  Now parametrize a family of translated Delaunay surfaces of mean curvature $\frac{2}{r}$ and period $2r + \sigma_K$ via 
$$ \Xi:  (t, \theta) \mapsto \left( r\rho_{T} \bigg( \frac{t - T_K }{r} \bigg) \cos(\theta) \, , \,  r \rho_{T} \bigg( \frac{t - T_K}{r} \bigg) \sin(\theta) \, , \,  t   \right) \, .$$
Note that the $t$-parameter now corresponds exactly to the arc-length parameter of $\gamma$.  Define truncations of these Delaunay surfaces via 
$$D_r^+(\sigma_K, \delta_K) := \Big\{ \Xi(t, \theta) : t \in [T_K - \omega, \infty) \: \: \mbox{and} \: \: \theta \in \Sph^1 \Big\}$$
where $\omega$ is a small number of size $\mathcal O (r \eps^{3/4})$.  

Next, one must find the optimally matching Delaunay surface.   First note that the neck size $\eps_K$ of the Delaunay surface $D_r^+(\sigma_K,  \delta_K)$ depends on $\sigma_K$.  Now as in Step 2 of the construction of the finite-length surface, the equality of the constant and logarithmic terms of the expansion of the first neck of $D_r^+(\sigma_K, 0)$ and of the asymptotic region of $\tilde {S}_K$ near $p_K^+$ determines $d_K$ and $\eps_K^+$ uniquely in terms of the remaining free parameter $\sigma_K$.  Now modify $D_r^+(\sigma_K, 0)$ by means of cut-off function as in Step 3 of of the construction of the finite-length surface so that it coincides exactly with $\tilde{S}_K$ in the region $\tilde{S}_K \cap \{ (t, x) : \frac{1}{2} r \omega  \leq \| x \| \leq r \omega \}$.  Construct also surfaces where $\delta_K \neq 0$, introducing mis-match.  Denote the family of modified surfaces by $\tilde D_r^+ (\sigma_K, \delta_K)$.  The definition of the one-ended approximate solutions is finally at hand.

\begin{defn}
	Let $K$ be given.  The one-ended surface with parameters $\sigma := \{ \sigma_0, \ldots, \sigma_{K} \}$ and $\delta := \{ \delta_0, \ldots, \delta_{K}\}$ is
$$\apsoloe := \tilde S_0' \setminus B_R(p_0^+)  \cup \left[\, \bigcup_{k=0}^K  \tilde S_k \setminus B_R(p_k^\flat) \right]  \cup \left[\, \bigcup_{k=0}^{K-1} \psi_k^{-1} (\tilde N_k) \right]  \cup \tilde D^+_r(\sigma_K, \delta_K) \, .$$
\end{defn}

\section{Function Spaces and Norms}

The constant mean curvature equation for normal perturbations of the approximate solutions  $\apsol$ constructed in the previous section will be solved for functions in weighted H\"older spaces.  The weighting will account for the fact that the geometry of $\apsol$ is nearly singular in the small neck-size limit that will be considered here.  In fact, two slightly different function spaces will be introduced.  The space $C^{k,\alpha}_\nu (\apsolf)$ will consist of all $C^{k,\alpha}_{\mathit{loc}}$ functions on $\apsolf $ where the rate of growth in the neck regions of $\apsolf$ is controlled by the parameter $\nu$.  The space $C^{k,\alpha}_{\nu, \bar \nu} (\apsoloe)$ will consist of all $C^{k,\alpha}_{\mathit{loc}}$ functions on $\apsoloe $ where the rate of growth in the neck regions of $\Sigma_r^{\mathit{OE}}(\sigma)$ is controlled by the parameter $\nu$ and the rate of growth in the asymptotic region along the axis of $\apsoloe$ is controlled by the parameter $\bar \nu$.  This latter degree of  control is necessitated by the non-compactness of $\apsoloe$.  The two types of spaces described above will collectively be denoted $C^{k,\alpha}_\ast(\apsol)$ for brevity whenever needed.

\subsection{Function Spaces and Norms for the Finite-Length Surface}

To begin, one must introduce a number of objects.  Define a weight function $\zeta_{r} : \apsol\rightarrow \R$ to achieve control of the growth of functions on $\apsol$ in the neck regions:
\begin{equation*}
	\zeta_{r}(p) := 
	\begin{cases}
		r \| x \| &\quad \psi_k(p) = (t, x) \in \tilde N_k  \:\: \mbox{with} \:\: x \in  \bar B_{R' \! /2}(0)  \:\: \mbox{for some} \:\:  k \\
		\mathit{Interpolation} &\quad \psi_k(p) = (t, x) \in \psi_k( \tilde S_k) \:\: \mbox{with} \:\:  x \in \bar B_{R'}(0) \setminus B_{R' \! /2}(0)  \:\: \mbox{for some} \:\:  k \\
		r &\quad \mbox{elsewhere} 
	\end{cases}
\end{equation*}
where the interpolation is such that $\zeta_r$ is smooth and monotone in the region of interpolation, has appropriately bounded derivatives, and is invariant under all the symmetries of $\apsol$.  For the norms themselves, first introduce the following terminology.  If $\mathcal U$ is any open subset of $\apsol$ and $T$ is any tensor field on $\mathcal U$, define 
\begin{equation*}
	| T |_{0, \mathcal U} := \sup_{x \in \,  \mathcal U} \Vert T(x) \Vert \\
	\qquad \mbox{and} \qquad [T]_{\alpha, \, \mathcal U} := \sup_{x,x' \in \, \mathcal U} \frac{\Vert T(x') - \Xi_{x,x'} (T(x)) \Vert}{\mathrm{dist}(x,x')^\alpha} \, ,
\end{equation*}
where the norms and the distance function that appear are taken with respect to the induced metric of $\apsol$, while $\Xi_{x,x'}$ is the parallel transport operator from $x$ to $x'$ with respect to this metric.  Then, for any function $f: \mathcal U \rightarrow \R$ define
$$|f|_{k, \alpha, \nu, \, \mathcal U} := \sum_{i=0}^{k}  | \zeta_r^{i-\nu} \nabla^i f |_{0, \, \mathcal U} + [ \zeta_r^{k + \alpha -\nu} \nabla^k f ]_{\alpha, \,\mathcal U} \, .$$
The norms for the finite-length and one-ended surfaces can now be defined.

Define a collection of overlapping open subsets of $\apsolf$ as follows.  Let $\bar R$ be a fixed radius such that $\bigcup_k B_{2 \bar R}(p_k^\flat)$ contains all the neck regions of $\tilde \Sigma_r^F(\sigma, \delta)$ and $\mathcal A := \apsolf \setminus \big[ \bigcup_k \bar B_{\bar R}(p_k^\flat)  \big]$ is the disjoint union of all the spherical regions of $\apsol$.  Let $\mathcal A_R := \apsolf \cap \big[ \bigcup_k B_{2 R}(p_{k}^\flat) \setminus \bar B_R (p_{k}^\flat) \big]$ for any choice of $R \in (0, \bar R)$.  
   
\begin{defn} 
	Let $\mathcal U \subseteq \apsolf$ and $\nu \in \R$ and $\alpha \in (0,1)$. The $C^{k, \alpha}_\nu$ norm of a function defined on $\mathcal U$ is given by
\begin{equation*}
	| f |_{C^{k, \alpha}_\nu (\mathcal U)} :=  |  f |_{k, \alpha, \,\mathcal U \cap \mathcal A} 
	+ \sup_{R \in (0, \bar R ) } | f |_{k, \alpha, \, \mathcal U \cap \mathcal A_R } \, . 
\end{equation*}  
\end{defn}

The function spaces that will be used in the case of the finite-length surface are simply the usual spaces $C^{k, \alpha}(\apsolf)$, but endowed with the $C^{k, \alpha}_\nu$ norm. This space will be denoted $C^{k, \alpha}_{\nu} (\apsolf)$.

\subsection{Function Spaces and Norms for the One-Ended Surface}

The definition of the weighted norm that will be used in the case of the one-ended approximate solution builds upon the norm just defined above.  Extend the collection of overlapping open subsets used above by defining the points $p_k^\flat$ for $k \geq K$ as the points of $\gamma$ upon which the neck regions of $\tilde D^+_r(\sigma_K, \delta_K)$ are centered, and then re-defining $\mathcal A$ and $\mathcal A_R$ as infinite unions over all $k \in \N$.  Furthermore, re-define the weight function $\zeta_r$ by leaving it unchanged on $\apsoloe \setminus \tilde D^+_r(\sigma_K, \delta_K)$ and defining
\begin{equation*}
	\zeta_{r}(p) := 
	\begin{cases}
		r \| x \| &\quad \psi_k(p) = (t, x) \in \tilde D^+_r(\sigma_K, \delta_K)  \:\: \mbox{with} \:\: x \in  \bar B_{R' \! /2}(0)   \\
		\mathit{Interpolation} &\quad \psi_k(p) = (t, x) \in \tilde D^+_r(\sigma_K, \delta_K) \:\: \mbox{with} \:\:  x \in \bar B_{R'}(0) \setminus B_{R' \! /2}(0)  \\
		r &\quad \mbox{elsewhere} 
	\end{cases}
\end{equation*}
Finally, for any subset $\mathcal U \subseteq \apsoloe$ and function $f \in C^{k, \alpha}_{\mathit{loc}}(\mathcal U)$ define the norm $|f|_{k, \alpha, \nu, \, \mathcal U}$ as above.  A second collection of overlapping open subsets of $\tilde D^+_r(\sigma_K, \delta_K)$ will be now be introduced.  Let $\bar T := t_{K/2}$ be the arc-length parameter corresponding to the point $p_{N/2}$ and for any choice of $T \geq \bar T$ define $\mathcal C_T := \apsoloe \cap \{ (x, t) \in \R^2 \times \R : t \in [T, T+ r] \}$.  Also define $\mathcal C := \apsoloe \setminus C_{\bar T}$. 

\begin{defn} 
	\label{def:normoe}
	Let $\mathcal U \subseteq \apsoloe$ and $\nu, \bar \nu \in \R$ and $\alpha \in (0,1)$. The $C^{k, \alpha}_{\nu, \bar \nu}$ norm of a function defined on $\mathcal U$ is given by
\begin{align*}
	| f |_{C^{k, \alpha}_{\nu, \bar \nu} (\mathcal U)} &:= |  f |_{k, \alpha,\nu, \,\mathcal U \cap \mathcal A \cap \mathcal C} +   \sup_{T > \bar T}  \, \, T^{-\bar \nu} \left( |  f |_{k, \alpha, \nu, \,\mathcal U \cap \mathcal A \cap \mathcal C_T} 
	+ \sup_{R \in (0, R_0 ) }  | f |_{k, \alpha,\nu , \, \mathcal U \cap \mathcal A_R \cap \mathcal C_T } \right) \, . 
\end{align*}  
\end{defn}

The function spaces that will be used in the case of the one-ended surface are the spaces $C^{k, \alpha}_{\nu, \bar \nu} (\apsoloe) := \{ f \in C^{k, \alpha}_{\mathit{loc}} (\apsoloe) : |f|_{C^{k,\alpha}_{\nu, \bar \nu}} < \infty \}$ endowed with the $C^{k, \alpha}_{\nu, \bar \nu}$ norm.

\begin{rmk}
	At first glance, the norm of Definition \ref{def:normoe} looks different from the norm used to study the linearized mean curvature operator of near-degenerate Delaunay surfaces in \cite[\S 4]{mazzeopacardends}.  This is because the norm in \cite[\S 4]{mazzeopacardends} is defined using a different parametrization (the $s$-parameter).  However, it is straightforward to check, using the estimates of \cite[\S 4]{mazzeopacardends} relating the $s$-parameter to the arc-length parameter, that the norm in \cite[\S 4]{mazzeopacardends} is equivalent to the $C^{k,\alpha}_{0, \bar \nu}$ norm defined above.
\end{rmk}

\section{Estimates of the Mean Curvature of the Approximate Solutions}
\label{sec:meancurvest}

This section estimates the amount which the approximate solutions $\apsol$ deviate from being CMC surfaces.  This is accomplished by estimating $H \big[\apsol \big] - \frac{2}{r}$ in the $C^{0,\alpha}_\ast$ norm when $\nu \in (1,2)$ for the finite-length surface and $(\nu, \bar \nu) \in (1,2) \times (-1, 0)$ for the one-ended surface.  In the estimates that follow, set $ \eps := \max \{ \eps_k^\pm, \eps_k \}$ and $\delta := \max \{ \delta_k \}$ and recall that $r_k = \mathcal O(r \eps^{3/4})$.

\subsection{The Estimate for the Finite-Length Surface}

The result for the finite-length surface $\apsolf$ is as follows.

\begin{prop}
	\label{prop:meancurvest}
	Suppose $\nu \in (1,2)$.  The mean curvature of $\apsolf$ satisfies the estimate
	$$\left|  H - \frac{2}{r} \right|_{\coan} \leq C \max \big\{ r^{3 - \nu}  , \, r^{5-\nu} \eps^{1/2 - 3\nu/4} , \, r^{1-\nu} \eps^{3/2 - 3\nu/4} , \delta r^{1-\nu} \eps^{1 - 3\nu/4} \big\} $$
	for some constant $C$ independent of $r$, $\eps$, $\delta$ and $K$.
\end{prop}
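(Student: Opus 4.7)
My plan is to decompose $\apsolf$ into three region types matched to the construction of Section \ref{sec:approxsol}, estimate $H - 2/r$ in each, then combine using the definition of the weighted norm. The three regions are: (i) the bulk of each perturbed sphere $\tilde S_k$, at fixed positive distance from the neck points $p_k^\pm$, where $\zeta_r \sim r$; (ii) the catenoidal core $\|x\| \leq \eps_k^{3/4}/2$ of each neck $\psi_k^{-1}(\tilde N_k)$, where the cut-off $\chi$ equals one and the surface is a piece of the rescaled standard catenoid of neck size $r\eps_k$; and (iii) the annular transition zone $\eps_k^{3/4}/2 \leq \|x\| \leq \eps_k^{3/4}$ where $\chi$ interpolates. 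The weight satisfies $\zeta_r = r\|x\|$ on (ii) and (iii). Each of the four terms in the asserted bound will originate from precisely one of these regions.

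On region (i), I would write $H[\tilde S_k] = \mathring H[\tilde S_k] + (H[\tilde S_k] - \mathring H[\tilde S_k])$. The Euclidean piece is handled by the expansions of Section \ref{sec:euclideancalcs} applied to the normal graph $u = rG_k$: away from the Dirac sources in \eqref{eqn:greenone} one has $\mathring{\mathcal L}_S(rG_k) = rA_k J_k = \mathcal O(r\eps)$, while Lemma \ref{lemma:quadeuclexp} bounds the quadratic remainder using $|rG_k|\,\|\mathring B\| + \|\mathring\nabla(rG_k)\| = \mathcal O(\eps)$. The ambient-curvature correction is controlled by Lemma \ref{lemma:newfnlinest}, the expansion \eqref{eqn:geomcexp}, and Lemmas \ref{lemma:newfnquadest} and \ref{lemma:newfnerrorest} with $\|Y\| = \mathcal O(r)$. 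The key observation is that because $g = dt^2 + A(t)\delta$ and $S_k$ is centered on $\gamma$, the generic $\mathcal O(r)$ and $\mathcal O(r^2)$ terms in \eqref{eqn:geomcexp} either vanish by parity of the relevant curvature integrands over $S_k$ or are absorbed into the $rA_k J_k$ piece (this being the purpose of $J_k$ in the Green's function definition), leaving an $\mathcal O(r^3)$ remainder. Multiplication by $\zeta_r^{-\nu} = r^{-\nu}$ produces the $r^{3-\nu}$ term.

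On region (ii), I would pass to the rescaled chart $\psi_k$ in which the surface is a standard Euclidean catenoid of neck size $\eps_k$ and the ambient metric is a perturbation of size $\mathcal O(r^2)$ times polynomials in the scaled position. Since the Euclidean catenoid is minimal, the full mean curvature collapses to the $R_s$, $\bar R$, and $\mathcal H$ contributions of \eqref{eqn:meancurvf}, bounded via Lemmas \ref{lemma:curvtermest} and \ref{lemma:newfnerrorest}; after comparison with $2/r$ and weighting by $(r\|x\|)^{-\nu}$ optimized over $\|x\| \in [\eps_k, \eps_k^{3/4}]$, this yields the $r^{5-\nu}\eps^{1/2-3\nu/4}$ term. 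On region (iii), the matched asymptotics of Step 2 together with \eqref{eqn:neckseprel} make the differences $\tilde F^\pm - F_{\mathit{neck}}^\pm$ and $\tilde F^\pm - F_{\mathit{sph}}^\pm$ of size $\mathcal O(\|x\|^2) + \mathcal O(\eps^3/\|x\|^2)$, minimized at $\|x\| \sim \eps_k^{3/4}$; each derivative of $\chi$ in \eqref{eqn:interpfn} carries a factor $\eps_k^{-3/4}$, and substituting these into $\mathring{\mathcal L}$ and $\mathring{\mathcal Q}$ from Section \ref{sec:euclideancalcs} produces the $r^{1-\nu}\eps^{3/2-3\nu/4}$ term. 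The displacement $\delta_k$ enters \eqref{eqn:interpfn} as the shift $\eps_k\delta_k$ inside $F_{\mathit{neck}}^\pm$, and its linear contribution to the mean curvature passes through the same mechanism to yield the $\delta r^{1-\nu}\eps^{1-3\nu/4}$ term.

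The main obstacle will be the bookkeeping in region (iii), where several small parameters (the Taylor remainders of $F_{\mathit{sph}}$ and $F_{\mathit{neck}}$, the logarithms in \eqref{eqn:neckseprel}, the cut-off derivatives at scale $\eps^{3/4}$, and the displacement $\delta$) must all be tracked simultaneously against both the weighted sup norm $\zeta_r^{-\nu}|\cdot|$ and the weighted Hölder seminorm $\zeta_r^{\alpha-\nu}[\cdot]_\alpha$. In particular I would have to verify carefully that the matching choice \eqref{eqn:neckseprel} exactly kills the constant and logarithmic parts of the mismatch, leaving only the claimed polynomial residues; the Hölder seminorms themselves would be handled by standard rescaling in $\psi_k$, exploiting the smoothness of the catenoid and sphere graphs in that chart. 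A secondary issue is ensuring that no cross-term generated by bridging between regions (i)--(iii) exceeds the four listed contributions, which follows from the scale separation $\eps \ll \eps^{3/4} \ll 1$ combined with the uniform bounds on $G_k$ and its derivatives away from the Dirac points.
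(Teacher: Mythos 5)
Your overall plan — split $\apsolf$ into spherical, catenoidal and transition pieces, estimate the Euclidean mean curvature via Section \ref{sec:euclideancalcs}, add the ambient corrections via Section \ref{sec:perturbedcalcs}, then weight and take suprema — is the paper's strategy, but several of your pointwise claims are wrong, and the errors do not cancel; they only appear to because you also use the wrong weight. The norm $\coan=C^{0,\alpha}_{\nu-2}$ weights the supremum by $\zeta_r^{2-\nu}$, not $\zeta_r^{-\nu}$. With the correct weight, your region (i) claim collapses: the assertion that the $\mathcal O(r)$ and $\mathcal O(r^2)$ curvature terms in \eqref{eqn:geomcexp} ``vanish by parity'' pointwise on $\tilde S_k$ is false. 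Terms like $\ric(Y,\mathring N)$ do not vanish pointwise on a geodesic sphere; parity kills them only after integration against $J_k$, which is exactly the balancing computation of Proposition \ref{prop:integralexp} and the reason the finite-dimensional error space $\tilde{\mathcal W}^F$ is needed at all in Section \ref{sec:projsolution}. The honest pointwise bound on the spherical bulk is $|H-\tfrac2r|\leq C(\eps^2/r + r)$, and the $r^{3-\nu}$ term is $r^{2-\nu}\cdot\mathcal O(r)$, not $r^{-\nu}\cdot\mathcal O(r^3)$.

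Your treatment of the neck also misses the dominant effect and misattributes a term. On the catenoidal core the Euclidean surface is minimal, so the pointwise discrepancy is $|\mathring H-\tfrac2r|=\tfrac2r$ itself; weighted by $(r\|x\|)^{2-\nu}$ and maximized at $\|x\|\sim\eps^{3/4}$ this gives $r^{1-\nu}\eps^{3/2-3\nu/4}$ — the same size as the transition-zone cut-off error — while the ambient-curvature corrections on the neck (with $\|Y\|=\mathcal O(r\eps|\log\eps|)$) are negligible and certainly do not produce $r^{5-\nu}\eps^{1/2-3\nu/4}$. That term in fact originates in a region your decomposition omits entirely: the annular part of the perturbed sphere at distance $\rho$ from $p_k^{\pm}$ with $\eps^{3/4}\lesssim\rho\lesssim 1$, where $G_k\sim\eps\,c\log\rho$, so $\|\mathring\nabla^2 G_k\|\sim\eps/\rho^2$, the quadratic remainder of the graph $rG_k$ is of size $r^3\eps^2/\rho^4$ (recall $\mathring{\mathcal L}_S(G_k)=0$ there, so only the quadratic part survives), and weighting by $(r\rho)^{2-\nu}$ and taking $\rho=\eps^{3/4}$ yields exactly $r^{5-\nu}\eps^{1/2-3\nu/4}$. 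Since your region (i) is restricted to points at fixed distance from $p_k^{\pm}$ and your regions (ii)--(iii) are confined to $\|x\|\leq\eps_k^{3/4}$ in the neck chart, this intermediate annulus — and with it one of the four terms in the statement — is not covered by your argument. Your identification of the $\delta$-term via the shift $\eps_k\delta_k$ and the cut-off Laplacian is correct, but the proof as outlined needs the weight fixed, the spherical pointwise bound corrected, the $2/r$ discrepancy of the minimal neck accounted for, and the near-attachment annulus of each sphere added to the decomposition before the four terms come out as claimed.
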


\begin{proof} There are several steps: the first two steps are to derive pointwise estimates for the mean curvature and second fundamental form of $\apsolf$ with respect to the Euclidean background metric in the spherical and neck regions, respectively; the third step is to convert these into pointwise estimates for the mean curvature with respect to the actual background metric; and the fourth step is to compute the desired $\coan$ norm of $H \big[\apsolf \big] - \frac{2}{r}$.  Only the weighted $C^0$ norm will be estimated explicitly below since the calculations for the weighted H\"older coefficient are very similar.  Finally, all estimates computed below are independent of $K$.

\paragraph*{Step 1.} The first step it to find a pointwise estimate for $\mathring B := \mathring B \big[\apsolf \big]$ and $\mathring H := \mathring H \big[\apsolf \big]$, the Euclidean second fundamental form and mean curvature, respectively, in the spherical region $\tilde S_k$ of $\apsolf$.  The key to this estimate is to use the formul\ae\ \eqref{eqn:quadsecondff} and \eqref{eqn:quadmeancurv} for $\mathring B$ and $\mathring H$ in terms of the graphing function $rG$ and the second fundamental form and mean curvature of $S_k$ in conjunction with the estimates from Section \ref{sec:euclideancalcs}.  The result is
\begin{equation}
\begin{aligned}
	\| \mathring B \| &\leq \frac{C}{r} \left( 1 +  | G | + r\| \mathring  \nabla G\| + r^2\| \mathring \nabla^2 G \big\| \right)  \\
	\Big| \mathring H - \frac{2}{r} \Big| &\leq \frac{C}{r} \left(  | G | + r\| \mathring  \nabla G\| + r^2\| \mathring \nabla^2 G\| \right)^2 \, .
\end{aligned}
\end{equation}
The reason the estimate for $\big| H - \frac{2}{r} \big|$ is so much better is because $\mathring {\mathcal L} (G ) = 0$ in the region being considered.  
 
To proceed with the estimate for $\big| H - \frac{2}{r} \big|$, it is thus necessary to estimate $| G | + r\| \mathring  \nabla G\| + r^2\| \mathring \nabla^2 G\|$.  First, in the part of $S_k$ near the points $p_\ast^\flat$ (here $\ast$ is $k$ or $ k-1$ as appropriate), where the distance to these points can be as small as $\mathcal O (r \eps^{3/4})$, one can use the Taylor series expansion of $G$ to derive the estimate
$$| G - \eps \, c \log( \rho) | + \rho \| \mathring \nabla G \| + \rho^2 \| \mathring \nabla^2 G \| \leq C \eps   \, ,$$
where $\rho :=  \mathrm{dist}( p_\ast^\flat \, , \cdot)$ is the distance function to $p_\ast^\flat$ with respect to the standard, unit-radius, induced metric of the sphere.  Next, in the part of $\tilde S_k$ that is an $\eps$-independent distance away from these points, $G$ satisfies the estimate
$$|G| + \| \mathring \nabla G \| + \| \mathring \nabla^2 G \|  \leq C \eps \, .$$  
In both of these estimates,  $c$ and $C$ are numerical constants.  Therefore near the points $p_\ast^\flat$ one has 
$$\| \mathring B \| \leq \frac{C r \eps}{\rho^2} \qquad \mbox{and} \qquad \Big| \mathring H - \frac{2}{r} \Big| \leq \frac{C r^3 \eps^2}{\rho^4} $$
while in the remainder of $\tilde S_k$ one has
$$\| \mathring B \| \leq \frac{C}{r}  \qquad \mbox{and} \qquad \Big| \mathring H - \frac{2}{r} \Big| \leq \frac{C \eps^2}{r}  \, .$$

\paragraph*{Step 2.} The next step is to look inside one of the scaled normal coordinate neighbourhoods used in the definition of the necks.  Again, the calculations  are performed with respect to the scaled Euclidean metric, and computed in the transition region $T_k := \tilde N_k \cap  \{ (t, x) : x \in B_{\eps_k^{3/4}}(0) \setminus B_{\eps_k^{3/4}/2}(0) \}$ as well as the neck region $N_k := \tilde N_k \cap  \{ (t, x) : x \in B_ {\eps_k^{3/4} /2}(0) \}$ itself.   Of course, the estimates in $N_k$ are extremely straightforward since $N_k$ is exactly the standard catenoid for which  $\mathring H = 0$ and $\| \mathring B \| = \sqrt{2} \, \eps \, \| x \|^{-2 }$.  Hence the challenge lies in estimating $\mathring H$ and $\mathring B$ in $T_k$.

The transition region $T_k$ is the graph of the function $\tilde F := \eta F + (1-\eta) G$ as in \eqref{eqn:interpfn}, where $\eta$ is a cut-off whose derivative is supported in $\frac{1}{2} \eps_k^{3/4} \leq \| x \|  \leq\eps_k^{3/4}$, while $F(x) := \eps_k \arccosh( \|x \| / \eps_k) + d_k + \delta_k$ and $G$ has an asymptotic expansion that matches the asymptotic expansion of $F$ except for the mismatch introduced by $\delta_k$.  Now,  the scaled Euclidean second fundamental form and mean curvature of a graph are 
\begin{align*}
	\mathring D \mathring B_{ij} &=  \mathring \nabla^2_{ij} \tilde F  \\[1ex]
	\mathring D \mathring H &=  \left( \delta^{ij} - \frac{\mathring \nabla^i \tilde F \,\, \mathring \nabla^j \tilde F}{\mathring D^2 } \right) \mathring \nabla^2_{ij} \tilde F
\end{align*}
where $\mathring D := \big( 1 + \| \mathring \nabla \tilde F \|^2 \big)^{1/2}$.  Write $\tilde F := u + F$ where $u := (1-\eta)(G-F) $ and observe that
$$ u= \eps_k \delta_k (1-\eta) + (1-\eta) (\hat G - \hat F)$$
for constants $c$ and $C $ of size $ \mathcal O(\eps)$ and functions $\hat G$ and $\hat F$ of size $\mathcal O( \| x \|^2)  + \mathcal O(\eps^3 \| x \|^{-2} )$.  When  $\| x \| = \mathcal O(\eps^{3/4})$ one has 
\begin{gather*}
	u - \eps_k \delta_k (1 - \eta) = \mathcal O(\eps^{3/2}) \\
	\mathring \nabla u + \eps_k \delta_k \mathring \nabla \eta = \mathcal O(\eps^{3/4}) \\
	\mathring \nabla^2 u + \eps_k \delta_k \mathring \nabla^2 \eta = \mathcal O(1) 
\end{gather*}
whereas $\| \mathring \nabla F \| = \mathcal O(\eps^{1/4})$ and $\| \mathring \nabla^2 F \| = \mathcal O(\eps^{-1/2})$. Thus plugging $\tilde F$ into the expression for $\mathring B$ and estimating yields 
$$\| \mathring B \| \leq C \eps^{-1/2} \, .$$
Next, plugging $\tilde F$ into the expression for $\mathring H$ yields
\begin{align*}
	D \mathring H  &= \mathring \Delta u + \left( \frac{1}{\sqrt{1 + \| \mathring \nabla F \|^2} } - \frac{1}{\sqrt{1 + \| \mathring \nabla F  + \mathring \nabla u \|^2} } \right) F_{,i} F_{,j} F_{,ij} \\[1ex]
	&\qquad - \frac{u_{,i} u_{,j} u_{,ij} + u_{,i} u_{,j} F_{,ij} + 2 u_{,i} F_{,j} u_{,ij} + F_{,i} F_{,j} u_{,ij} + 2 F_{,i} u_{,j} F_{,ij} }{\sqrt{1 + \| \mathring \nabla F  + \mathring \nabla u \|^2} }
\end{align*}
where $\mathring \Delta F - (1 + \| \mathring \nabla F \|^2)^{-1} F_{,i} F_{,j} F_{,ij} \equiv 0$ has been used, which holds since $F$ is the graphing function for the catenoid which has zero mean curvature.  Therefore one can estimate
$$| H - 2 + \eps_k \delta_k \mathring \Delta \eta | \leq C$$
where $C$ is independent of $\eps$.

\paragraph*{Step 3.} The next step is to compute the pointwise norm of $H := H[\apsolf]$ with respect to the actual background metric. Lemma \ref{prop:approxexpansions} relates $H$ to $\mathring H$ and $\mathring B$, and the form of the relationship that is germane to the current derivation is expressed most succinctly in equation \eqref{eqn:meancurvf}.  Thus to proceed, one must substitute the estimates from Steps 1 and 2 into this formula and estimate the curvature terms as in Section \ref{sec:perturbedcalcs}.  Again, this should be done in the different regions identified above.  Consider first  the spherical region $\tilde S_k$, where $\| Y \| = \mathcal O(r)$ and
\begin{align*}
	\left| H - \frac{2}{r} \right| &\leq    \left| \mathring H - \frac{2}{r} \right| + |R(0)| \| \mathring B \| + | \bar R(0) \| \\
	&\leq  \left| \mathring H - \frac{2}{r} \right| + C \big( \|Y \| + \|Y\|^2 \| \mathring B \|\big) \\[0.5ex]
	&\leq 
	\begin{cases}
		C \left( \dfrac{r^3 \eps^2}{\rho^4} + r \right) \\[3ex]
		C \left( \dfrac{\eps^2}{r} + r \right) \, .
	\end{cases} 
\end{align*}
near $p_\ast^\flat$ and away from $p_\ast^\flat$, respectively.

Consider now the neck and transition regions $\tilde N_k$.  First, by reversing the scaling used in Step 2, the second fundamental form and mean curvature in $\tilde N_k$, measured with respect to the Euclidean metric, satisfy the estimates
$$\left| \mathring H - \frac{2}{r} + \eps_k  \delta_k \mathring \Delta \eta \right| \leq \frac{C}{r}  \qquad \mbox{and} \qquad \| \mathring B \| \leq \frac{C}{r \eps^{1/2}} \, .$$
Now these estimates can be plugged into the expansion of the mean curvature as above, except that $Y$ is now the position vector field of $\psi^{-1}(\tilde N_k)$ relative to the center of the normal coordinate chart used in the construction of the neck --- namely the point $p_k^\flat$.  Hence $\|Y \|$ is uniformly bounded by $\max \{ r|G|, r|F| \} \leq \mathcal O (r \eps | \log(\eps)|)$.  With this in mind, one obtains the estimate
\begin{align*}
	\left| H - \frac{2}{r} \right| 
	&\leq 
	\begin{cases}
		\dfrac{C}{r}\! \left( 1 + r^2 \eps |\log(\eps) |  + r^2 \eps^{3/2} | \log(\eps) |^2  \right) \\[3ex]
		\dfrac{C}{r}\!  \left( 1 + r^2 \eps |\log(\eps) |  + \dfrac{r^2 \eps^3 | \log(\eps) |^2}{ \| x \|^2}   + r \eps_k \delta_k | \mathring \Delta \eta | \right) \, .
	\end{cases} 
\end{align*}
in the transition region of the neck and in the neck itself, respectively.
\begin{rmk}
	In all of these estimates, the $\mathcal H$ term from Lemma \ref{prop:approxexpansions} is always negligible.
\end{rmk}

\paragraph*{Step 4.} The remaining task is to estimate $| H - \frac{2}{r}|$ in the $C^{0,\alpha}_{\nu - 2}$ norm for $\nu \in (1,2)$.  This estimate will be derived by estimating the supremum of $\zeta_r^{2-\nu} | H - \frac{2}{r}|$ in the three regions of $\apsolf$ identified in the previous steps.  For now, the only assumption that will be made about $\eps$ is that $\eps \ll r$. First, consider a spherical region $\tilde S_k$ away from the points $p_\ast^\flat$.  In this region $\zeta_r \equiv r$ so that
$$\zeta_r^{2-\nu} \left| H - \frac{2}{r} \right| \leq C r^{3-\nu}  \, .$$
Also, in a spherical region $\tilde S_k$ near the points $p_\ast^\flat$, one has instead $\zeta_r \approx \mathrm{dist} ( p_\ast^\flat, \cdot) = r \rho$.   Thus
\begin{align*}
	 \zeta_r^{2-\nu} \left| H - \frac{2}{r} \right| &\leq \sup_{\rho \in [\eps^{3/4}, R']} C (r \rho)^{2-\nu} \left( r + \frac{r^3 \eps^2}{\rho^4} \right) \\
	 &\leq C r^{3-\nu} \big( \eps^{3(2 - \nu)/4} + r^{2} \eps^{1/2 - 3 \nu/4} \big) 
\end{align*}
Next, in the neck region $\tilde N_k$ one has $\zeta_r(x) = r \| x \|$ in the local coordinates used to define $\tilde N_k$.  Hence
\begin{align*}
	\zeta_r^{2-\nu} \left| H - \frac{2}{r} \right| &\leq \sup_{\eps \leq \|x \| \leq \eps^{3/4}} C r^{1-\nu} \|x \|^{2-\nu} \left( 1 + \frac{\eps^2}{\| x \|^2}  + r^2 \eps |\log(\eps) |  + \frac{r^2 \eps^3 | \log(\eps) |^2}{ \| x \|^2}  + \eps_k \delta_k|  \mathring \Delta \eta (x)| \right) \\
	&\leq C r^{1-\nu} \big( (1 + r^2 \eps | \log(\eps) | ) \eps^{3(2 - \nu)/4} + \eps^{2-\nu} + r^2 \eps^{3-\nu} | \log(\eps)|^2 + \delta_k \eps^{-1/2} \big) \\
	&\leq C r^{1-\nu} \left( \eps^{3/2 - 3\nu/4} + \delta_k \eps^{1 - 3\nu/4} \right) \, .
\end{align*}
Therefore consolidating these three estimates yields
$$\left|  H - \frac{2}{r} \right|_{C^{0}_{\nu - 2}} \leq C \max \big\{ r^{3 - \nu}  , \, r^{5-\nu} \eps^{1/2 - 3\nu/4} , \, r^{1-\nu} \eps^{3/2 - 3\nu/4} , \delta_k r^{1-\nu} \eps^{1 - 3\nu/4}  \big\}$$
which is the desired weighted supremum norm estimate.  The full $\coan$ estimate follows once the estimate for the H\"older coefficient has been computed.  As indicated above, this computation is more involved but very similar, and yields the same result.
\end{proof}

\subsection{The Estimate for the One-Ended Surface}

The estimate of the mean curvature of the one-ended surface $\apsoloe$ is a very straightforward extension of the results obtained in the previous section.  In fact, the result is the same.

\begin{prop}
	\label{prop:meancurvestoe}
	Suppose $\nu \in (1,2)$ and $\bar \nu \in (-1, 0)$ is sufficiently close to zero.  The mean curvature of $\apsoloe$ satisfies the estimate
	$$\left|  H - \frac{2}{r} \right|_{\coann} \leq C \max \big\{ r^{3 - \nu}  , \, r^{5-\nu} \eps^{1/2 - 3\nu/4} , \, r^{1-\nu} \eps^{3/2 - 3\nu/4}, \delta r^{1-\nu} \eps^{1 - 3\nu/4}   \big\} $$
	for some constant $C$ independent of $r$, $\eps$, $\delta$ and $K$.
\end{prop}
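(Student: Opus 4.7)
The plan is to adapt the four-step proof of Proposition \ref{prop:meancurvest} by decomposing $\apsoloe$ into its compact portion (the string of perturbed spheres and catenoidal necks that is structurally identical to a piece of $\apsolf$) and its Delaunay end $\tilde D^+_r(\sigma_K, \delta_K)$, then combining the estimates via Definition \ref{def:normoe}. On the compact portion, which sits inside $\mathcal C$ (more precisely, has $t$-coordinates bounded in terms of $\bar T$), Steps 1--3 of the proof of Proposition \ref{prop:meancurvest} apply verbatim and yield the same pointwise bounds on $|H - \frac{2}{r}|$ in the spherical, transition and neck regions. This contributes exactly to the first summand $|f|_{k,\alpha,\nu,\,\mathcal U \cap \mathcal A \cap \mathcal C}$ of the norm, with the same bound as before.

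The new ingredient is the analysis on $\tilde D^+_r(\sigma_K, \delta_K)$. The key observation is that the unmodified half-Delaunay $D^+_r(\sigma_K, \delta_K)$ has Euclidean mean curvature identically equal to $\frac{2}{r}$ by construction, since it is a standard Delaunay surface of mean curvature $2$ rescaled by $r$. The cut-off modification that joins it to $\tilde S_K$ is a neck-type transition of exactly the same form as those analyzed in Step 2 of Proposition \ref{prop:meancurvest}, so in that overlap region we obtain identical pointwise bounds on $\mathring H - \frac{2}{r}$ and $\|\mathring B\|$. Away from the modification, $\mathring H - \frac{2}{r} \equiv 0$, so the entire contribution to $H - \frac{2}{r}$ comes from the ambient curvature terms $R_1$, $R_2$ and $\bar R$ appearing in equation \eqref{eqn:meancurvf}, combined with the expressions from Lemma \ref{lemma:curvtermest}. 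Since we always work in geodesic normal coordinates based at a nearby point of $\gamma$, the position vector $Y$ satisfies $\|Y\| = \mathcal O(r)$ on each period of the Delaunay surface, while $\|\mathring B\|$ on the Delaunay bulges and necks is controlled by $\frac{1}{r}$ and $\frac{1}{r\eps^{1/2}}$ respectively, exactly as in Step 3 of the previous proof.

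The final and decisive ingredient is the $\bar\nu$ weight. For any period of the Delaunay end centered at arc-length $t \geq \bar T$, we work in a normal chart based at the corresponding point of $\gamma$. Under our standing assumption $g = \dif t^2 + A(t)\,\delta$ with $A(t) \to $ const and $|S(t)| \leq C\me^{\alpha t}$ ($\alpha < 0$), all components of $\riem$, $\ric$ and $\bar\nabla\riem$ at that basepoint are polynomials in $\dot A(t)$, $\ddot A(t)$ and their derivatives, which by our exponential-decay assumption on the metric coefficient decay like $\me^{\alpha t}$. Thus the pointwise contribution to $|H - \frac{2}{r}|$ coming from $R_s^{(0)}$ and $\bar R^{(0)}$ is exponentially small in $t$, and since $\bar\nu$ is required to be sufficiently close to zero, the polynomial factor $T^{-\bar\nu} = T^{|\bar\nu|}$ in Definition \ref{def:normoe} is absorbed with room to spare. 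Consequently $\sup_{T > \bar T} T^{-\bar\nu}|H - \frac{2}{r}|_{0,\alpha,\nu,\,\mathcal U \cap \mathcal A \cap \mathcal C_T}$ is bounded by the same quantity coming from the worst transition region (which lies at $t \sim \bar T$), yielding the identical bound as in Proposition \ref{prop:meancurvest}.

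The main obstacle, and the only real piece of extra work beyond Proposition \ref{prop:meancurvest}, is to confirm that the scalar-curvature decay $|S(t)| \leq C\me^{\alpha t}$ together with the warped-product form of $g$ really does force exponential decay of the full curvature tensor and of $\bar\nabla \riem$ uniformly on tubular neighbourhoods of $\gamma$ of radius $\mathcal O(r)$. This is a direct check using the explicit formulae for the curvature of $\dif t^2 + A(t)\delta$ and the positivity of $A$. Once this is in place, the H\"older seminorm estimate mirrors the sup-norm computation just as in Step 4 of Proposition \ref{prop:meancurvest}, and no new balance among $r$, $\eps$, $\delta$, $K$ is imposed.
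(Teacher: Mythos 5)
Your proposal is correct and follows essentially the same route as the paper: reuse the finite-length estimates of Proposition \ref{prop:meancurvest} on the spheres-and-necks portion, observe that the unperturbed Delaunay part has Euclidean mean curvature exactly $\frac{2}{r}$ so that only the ambient curvature terms of \eqref{eqn:meancurvf} contribute there, and absorb the $T^{-\bar\nu}$ weight using the exponential decay of the ambient curvature along $\gamma$. The only difference is that you explicitly flag the verification that the warped-product assumptions force exponential decay of the full curvature tensor, a point the paper simply asserts.
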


\begin{proof}
	The computations of Proposition \ref{prop:meancurvest} give the estimate of the mean curvature of the part $\apsoloe$ constructed from spheres and catenoids.  It thus remains only to compute the estimate of the mean curvature of $\tilde D^+_r(\sigma_K, \delta_K)$ (actually, the part of $\tilde D^+_r(\sigma_K, \delta_K) $ that is an un-perturbed Delaunay surface).  Once again, the key is to use Lemma \ref{prop:expansions} in the form of equation \eqref{eqn:meancurvf}, but this time realizing that the mean curvature of $\tilde D^+_r(\sigma_K, \delta_K) $ with respect to the Euclidean background metric in a tubular neighbourhood of $\gamma$ is exactly equal to $\frac{2}{r}$.   Let $p$ be any point on $\gamma$ and let $p'$ be any point on $\tilde D^+_r(\sigma_K, \delta_K) \cap B_r(p)$.  Then by equation \eqref{eqn:meancurvf}, 
$$ \left| H(p') - \frac{2}{r} \right| \leq C |R(p)|  \big( \| Y(p, p') \|^2 \| \mathring B(p') \| + \| Y(p, p') \| \big)
$$
where $R$ is an expression that is linear in the components of the ambient Riemannian curvature at $p$ and $Y(p, p')$ is the position vector field of $p'$ with respect to $p$.   The constant $C$ is independent of $r$ and $\eps$.  Since the curvature is exponentially decaying along $\gamma$, the same estimates from Proposition \ref{prop:meancurvest} continue to hold and yield the estimate needed here.  
\end{proof}

\section{The Solution up to Finite-Dimensional Error}
\label{sec:projsolution}

\subsection{The Finite-Length Surface}

\subsubsection{Strategy} 
\label{sec:strategy}

Let $\mu : \ctan (\apsolf) \rightarrow \mathit{Emb}( \apsolf, M)$ be the exponential map of $\apsolf$ in the direction of the unit normal vector field of $\apsolf$ with respect to the backgroung metric $g$.  Hence $ \mu_{rf} \big(\apsolf \big)$  is the scaled normal deformation of $\apsol$ generated by $f \in \ctan (\apsolf)$.  The equation
\begin{equation}
	\label{eqn:tosolve}
	H \big[ \mu_{rf} \big(\apsolf \big) \big]  = \frac{2}{r}
\end{equation}
selects $f \in \ctan (\apsolf)$ so that $\mu_{rf}( \apsolf)$ has constant mean curvature equal to $\frac{2}{r}$.   In addition, the function $f$ will be assumed symmetrical with respect to all the symmetries satisfied by $\apsolf$.  Using a fixed-point argument together with a suitable choice of weight parameters,  the equation \eqref{eqn:tosolve} will be solved up to a \emph{finite-dimensional error term}.  This means that a solution of
$$H \big[ \mu_{rf} \big(\apsolf \big) \big]  = \frac{2}{r} + \mathcal E$$
will be found, where $\mathcal E$ belongs to a finite-dimensional subspace of functions that will be denoted $\tilde{\mathcal W}^F$ and specified below.  At first glance, the error $\mathcal E$ will come from terms in the solution procedure that are not sufficiently small.  However, the true reason for the presence of $\mathcal E$ is geometric and will be explained in 
Section \ref{sec:balancing}, where we show how to eliminate it. 

To begin, write $H [  \mu_{rf} \big(\apsolf \big)] = H [  \apsol] + \mathcal L (rf) + \mathcal Q(rf) + \mathcal H(rf)$ where $\mathcal L$ is the linearized mean curvature operator, $\mathcal Q$ is the quadratic remainder part of the mean curvature and $\mathcal H$ is the small error term as in equation \eqref{eqn:meancurvfnexp}.  The first step is to construct a suitably bounded parametrix  $\mathcal R :\coan(\apsolf) \rightarrow \ctan (\apsolf)$ satisfying $ \mathcal L \circ \mathcal R = \mathit{Id} + \mathcal E$ where $\mathcal E$ maps into $\tilde{\mathcal W}^F$.  Now the \emph{Ansatz} $f :=  \frac{1}{r} \mathcal R \big( w - H \big[\apsolf \big] + \frac{2}{r}  \big)$ transforms the equation \eqref{eqn:tosolve} into the fixed-point problem
\begin{align}
	\label{eqn:cmapf}
	w & = -  {\mathcal Q} \circ  \mathcal R \left(w -  H \big[\apsolf \big]  + \frac{2}{r} \right)  -  {\mathcal H} \circ  \mathcal R \left(w -  H \big[\apsolf \big]  + \frac{2}{r} \right) \, .
\end{align}
up to the finite-dimensional error term in $\tilde{\mathcal W}^F$.  The remaining task is to show that the mapping $\mathcal N_r : \coan(\apsolf) \rightarrow \coan(\apsolf)$ given by the right hand side of \eqref{eqn:cmapf} is a contraction mapping onto a neighbourhood of zero containing $H \big[\apsolf \big]  - \frac{2}{r}$.  Once this is done, then one has solved the equation \eqref{eqn:tosolveoe} up to a term in $\tilde{\mathcal W}^F$. 

\subsubsection{The Linear Analysis}
\label{sec:linanalysis}
We now find a parametrix $\mathcal R$ satisfying $\mathcal L \circ \mathcal R = \mathit{id} + \mathcal E$ where $\mathcal E$ has finite rank. In each normal coordinate chart, $\mathcal L$ is close to the linearized mean curvature operator with respect to the Euclidean metric $\mathring {\mathcal L} (f) := \mathring \Delta f + \| \mathring B \|^2 f$. Patching together inverses for the latter operator gives a parametrix with an error which decomposes into terms which are genuinely small, and those which together constitute $\mathcal E$.  

First, denote $\mathit{Ann}_\tau (p) := B_\tau(p) \setminus B_{\tau/2}(p)$ and define subsets of $\apsolf$ by
\begin{align*}
	\mathcal S_k^{\, \tau} &:= \apsolf \setminus \big[ B_\tau (p_k^\flat) \cup B_\tau (p_{k-1}^\flat) \big] \\
	\mathcal T_{k, +}^{\, \tau} &:= \apsolf \cap \mathit{Ann}_\tau(p_k^\flat) \\
	\mathcal T_{k, -}^{\, \tau} &:= \apsolf \cap \mathit{Ann}_\tau(p_{k-1}^\flat) \\
	\mathcal N_k^{\, \tau} &:= \apsolf \cap B_\tau (p_k^\flat) \, .
\end{align*}
Now define the smooth, monotone cut-off functions  
\begin{align*}
	\chi^{\tau}_{\mathit{neck}, k} (x) &:= 
	\begin{cases}
		1 &\qquad x \in \mathcal N^{\, \tau}_k \\
		\mbox{Interpolation} &\qquad x \in  	\mathcal T_{k, +}^{\, \tau} \cup \mathcal T_{k+1, -}^{\, \tau}  \\
		0 &\qquad \mbox{elsewhere}
	\end{cases} \\
	\chi^{\tau}_{\mathit{ext}, k} (x) &:= 
	\begin{cases}
		1 &\qquad x \in \mathcal S^{\, \tau}_k \\
		\mbox{Interpolation} &\qquad x \in \mathcal T_{k, +}^{\, \tau} \cup \mathcal T_{k, -}^{\, \tau}   \\
		0 &\qquad \mbox{elsewhere}	
	\end{cases}	
\end{align*}
so that $\sum_{k} \chi^{\tau}_{\mathit{ext}, k} + \sum_{k} \chi^{\tau}_{\mathit{neck}, k} = 1$ for all $\tau$ and all cut-off functions are invariant with respect to all symmetries satisfied by $\apsolf$.   

\begin{prop}
	\label{prop:linest}
	Let $\nu \in (1,2)$.  There is an operator $\mathcal R: \coan(\apsolf) \rightarrow \ctan(\apsolf)$  that satisfies $\mathcal L \circ \mathcal R = \mathit{id} - \mathcal E$ where $\mathcal E : \coan(\apsolf) \rightarrow  \tilde{\mathcal W}^F$.  Here $\tilde{\mathcal W}^F$ is a finite-dimensional space that will be defined below.  The estimates satisfied by $\mathcal R$ and $\mathcal E$ are
	$$| \mathcal R(w) |_{\ctan} + |\mathcal E(w)|_{C^{2,\alpha}_0} \leq C | w |_{\coan}$$
	for all $w \in \coan(\apsolf)$, where $C$ is a constant independent of $r$, $\eps$ and $\delta$. 
\end{prop}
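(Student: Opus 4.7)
My plan is to build $\mathcal R$ by patching together right inverses of $\mathcal L$ on the two kinds of building blocks of $\apsolf$ and to take $\tilde{\mathcal W}^F$ as the span of the local cokernels left behind by the patching. By Lemma \ref{lemma:newfnlinest}, on each perturbed sphere $\tilde S_k$ the operator $\mathcal L$ is a small $\mathcal O(r)$-perturbation (after rescaling) of the Euclidean sphere operator $\mathring{\mathcal L}_S = \mathring \Delta + \|\mathring B\|^2$; on each scaled neck $\psi_k^{-1}(\tilde N_k)$ it is a small perturbation of the standard catenoid Jacobi operator $\mathring{\mathcal L}_C$. So it suffices to invert these two model operators uniformly in $r$ and $\eps$ in the weighted spaces, then glue the results using the partition $\chi^\tau_{\mathrm{ext},k} + \chi^\tau_{\mathrm{neck},k} = 1$.

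For the sphere piece, $\mathring{\mathcal L}_S$ on the unit round sphere has a three-dimensional nullspace spanned by the coordinate functions, of which the axially symmetric part is $J_k$. On the sphere with two small polar disks of radius $r_k/2 = r \eps^{3/4}/2$ removed, it becomes Fredholm on $\coan$; since $\nu \in (1,2)$ sits strictly between two consecutive indicial roots at each puncture, a standard separation-of-variables argument yields a right inverse $G^{\mathrm{sph}}_k$ with norm bounded independently of $r,\eps$, modulo the projection onto the translation Jacobi kernel. For the neck piece, the Jacobi operator of a unit catenoid has a four-dimensional bounded nullspace (three translations and one dilation); in the natural $(s,\theta)$ coordinate on the catenoid the weight $\zeta_r$ is comparable to the cylindrical-end weight, so standard scattering theory on a cylinder produces a right inverse $G^{\mathrm{neck}}_k$ uniform in $\eps$, modulo this four-dimensional cokernel.

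The two local inverses are then patched at the transition scale $\tau \sim r \eps^{3/4}$ by setting
\[
\mathcal R(w) := \sum_{k} \chi^{2\tau}_{\mathrm{ext},k}\, G^{\mathrm{sph}}_k\!\big(\chi^{\tau}_{\mathrm{ext},k} w\big) + \sum_{k} \chi^{2\tau}_{\mathrm{neck},k}\, G^{\mathrm{neck}}_k\!\big(\chi^{\tau}_{\mathrm{neck},k} w\big),
\]
where I use $\chi^{2\tau}_{\cdot,k}$ on the outside to guarantee that each term is supported strictly inside the region where the corresponding local inverse is defined. Then $\mathcal L \mathcal R = \mathit{id} - \mathcal E_0 + \mathcal K$, where $\mathcal E_0$ is the projection of the local outputs onto the local cokernels and $\mathcal K$ collects the commutator terms $[\mathcal L, \chi^{2\tau}_{\cdot,k}]\, G_{\cdot,k}(\chi^{\tau}_{\cdot,k} w)$ together with the perturbation error $(\mathcal L - \mathring{\mathcal L})\mathcal R(w)$ controlled by Lemma \ref{lemma:newfnlinest}. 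The commutators are supported in the transition annuli and gain a positive power of $\tau$ in the $\coan$ norm; the metric errors carry a prefactor $r + \|Y\|\|\mathring B\|$. Together $\|\mathcal K\| = o(1)$ as $r, \eps \to 0$, so a Neumann iteration absorbs $\mathcal K$. The residual projection $\mathcal E$ has image equal to the finite-dimensional space $\tilde{\mathcal W}^F$ spanned by the translation Jacobi fields on each $\tilde S_k$ and the translation/dilation Jacobi fields on each $\tilde N_k$, symmetrized to respect the $t \mapsto -t$ reflection; the claimed bound $|\mathcal R(w)|_{\ctan} + |\mathcal E(w)|_{C^{2,\alpha}_0} \leq C|w|_{\coan}$ follows from the uniform bounds on the two local inverses.

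The main obstacle will be uniformity in $\eps$ for the neck inverse: the catenoidal operator $\mathring{\mathcal L}_C$ degenerates to the flat cylindrical Laplacian at the neck, so all constants in $G^{\mathrm{neck}}_k$ depend critically on the weight $\nu$, and only for $\nu \in (1,2)$ does one sit strictly between the two leading indicial roots of $\mathring{\mathcal L}_C$ at the cylindrical end, giving inversion modulo a clean four-dimensional cokernel with $\eps$-uniform constants. A second delicate point is that the cokernel modes for $G^{\mathrm{neck}}_k$ are not honest Jacobi fields of $\mathcal L$ on the full glued $\apsolf$; reabsorbing them through the Neumann iteration requires that the commutator gain $\tau/r \sim \eps^{3/4}$ beat the degeneration of the local inverses, which is precisely where the choice of patching scale and weight come together.
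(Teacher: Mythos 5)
Your overall architecture (local model inverses on spheres and necks, patching by cut-offs, a finite-dimensional error space, and an iteration to absorb the remaining error) is the same as the paper's, but two of your key claims do not hold as stated, and they hide the one genuinely new idea the paper needs. First, with a single patching scale $\tau$ and the fixed-ratio cut-offs $\chi^{\tau}$, $\chi^{2\tau}$, the commutator terms $[\mathcal L,\chi^{2\tau}_{\cdot,k}]G_{\cdot,k}(\chi^{\tau}_{\cdot,k}w)$ do \emph{not} gain a positive power of $\tau$ in the $\coan$ norm: these weighted norms are essentially scale invariant, so the commutator contribution is bounded by a constant depending only on the ratio of the two cut-off scales (here fixed equal to $2$), not by a power of $\tau$. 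The paper needs a hierarchy of four radii $\tau_1<\tau_2<\tau_3<\tau_4$ and gets smallness from ratios such as $(\tau_1/\tau_2)^{2-\nu}$. Worse, the smallness fails outright for the part of the spherical solution $u_{\mathit{ext},k}$ that tends to a nonzero constant $a_k^{+}$ at the puncture $p_k^{+}$: a constant has weighted size $\sim a_k^{+}(r\eps)^{-\nu}$ near the neck, so no choice of cut-off scale makes the associated cut-off error small, and it is not a projection onto any local cokernel either. This is exactly why the paper decomposes $u_{\mathit{ext},k}=v_{\mathit{ext},k}+a_k^{+}\eta_k^{+}+a_k^{-}\eta_k^{-}$, uses the freedom of adding multiples of $J_k$ to kill $a_k^{-}$, and places the resulting terms $\chi^{\tau_1}_{\mathit{ext},k}\mathcal L_S(\eta_k^{+}\chi^{\tau_1}_{\mathit{neck},k})$ into $\tilde{\mathcal W}^F$. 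Your proposal has no mechanism for this non-decaying component, and your Neumann iteration would not converge without it.

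Second, your description of the neck contribution to $\tilde{\mathcal W}^F$ is off. For $\nu\in(1,2)$ the weight sits between the indicial roots $1$ and $2$ at the planar ends of the catenoid, and the Jacobi operator $\mathcal L_N$ is then \emph{surjective} onto $\coan$ (the bounded and linearly/log-growing Jacobi fields are absorbed into the allowed growth class of the solution); there is no four-dimensional cokernel to quotient by, and no catenoid Jacobi fields appear in $\tilde{\mathcal W}^F$. The paper's error space consists only of the cut-off spherical kernels $\chi^{\tau_1}_{\mathit{ext},k}J_k$ together with the terms $\chi^{\tau_1}_{\mathit{ext},k}\mathcal L_S(\eta_k^{+}\chi^{\tau_1}_{\mathit{neck},k})$ described above; this has dimension matching the number of free parameters $(\sigma,\delta)$, which is what makes the later balancing argument possible. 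Enlarging $\tilde{\mathcal W}^F$ by neck Jacobi fields, as you propose, would leave you with more error dimensions than parameters in Section \ref{sec:balancing}.
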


\begin{proof}
Let $w \in \coan(\apsolf)$ be given.  The task at hand is to solve the equation $\mathcal L(u) = w + \mathcal E(w)$ for a function $u \in \ctan(\apsolf)$ and an error term $\mathcal E(w) \in \tilde{\mathcal W}^F$.  To begin, introduce four radii $\tau_{1} < \tau_{2} < \tau_{3} < \tau_{4}  \ll r$ with the property that the supports of the gradients of the cut-off functions $\chi_{\ast}^{\tau_i}$ and $\chi_{\ast}^{\tau_j}$ do not overlap for $i \neq j$.   These radii will need to be further specified; and this will be done in the course of the proof below.

\paragraph*{Step 1.} Let $w_{\mathit{neck}, k} := w \chi^{\tau_{3}}_{\mathit{neck}, k}$.  This function  has compact support in $\mathcal N_k^{\, \tau_{3}}$ and thus can be viewed as a function of compact support on the standard scaled catenoid.  Now consider the equation $\mathcal L_N (u) =  w_{\mathit{neck}, k}$ on the standard scaled catenoid, where $\mathcal L_N$ is the linearized mean curvature operator of the scaled catenoid $r\eps_k N$ with respect to the Euclidean metric.  Then the pull-back of $\mathcal L$ to the standard scaled catenoid is a small perturbation of $\mathcal L_N$.  By the theory of the Laplace operator on asymptotically flat manifolds, the operator $\mathcal L_N$ is surjective onto $\coan(r \eps_k N)$ when $\nu \in (1,2)$.  Hence there is a solution $u_{\mathit{neck}, k}$ as desired, satisfying the estimate $|u_{\mathit{neck}, k}|_{\ctan} \leq C |w|_{\coan}$ where these norms can be taken as the pull-backs of the weighted H\"older norms being used to measure functions on $\apsolf$.  Finally, extend $u_{\mathit{neck}, k}$ to all of $\apsolf$ by the definition $\bar u_{\mathit{neck}, k} := u_{\mathit{neck}, k} \chi^{\tau_{4}}_{\mathit{neck}, k}$ and set $\bar u_{\mathit{neck}} := \sum_k \bar u_{\mathit{neck}, k}$.

\paragraph*{Step 2.} Define $w_{\mathit{ext}, k} := \big( w - \mathcal L( \bar u_{\mathit{neck}}  )\big) \chi_{\mathit{ext}, k}^{\tau_{2}}$.  Then $w_{\mathit{ext}, k}$ is a function of compact support on $\mathcal S_k^{\tau_{2}}$ and thus can be viewed as a function of compact support on the sphere $S_k \setminus \{ p_k^+, p_k^-\}$.  Now consider the equation $\mathcal L_S (u) = w_{\mathit{ext}, k}$, where $\mathcal L_S$ is the linearized mean curvature operator of the sphere of radius $r$ with respect to the Euclidean metric.  Then the pull-back of $\mathcal L$ to the sphere is a small perturbation of $\mathcal L_S$.  It is not \emph{a priori} possible to solve the equation $\mathcal L_S (u_{\mathit{ext}, k}) = w_{\mathit{ext}, k}$ on $S_k$ because of the one-dimensional kernel of $\mathcal L_S$ that is invariant with respect to the symmetries that have been imposed.  A basis for the kernel is given by the function $J_k : S_k \rightarrow \R$ defined by $J_k := \mathring g( \mathring N, \frac{\partial}{\partial t})$ where $\mathring g$ is the Euclidean metric and $\mathring N$ is the unit normal vector of $S_k$.  Let $w_{\mathit{ext}, k}^\perp := w_{\mathit{ext}, k} - \langle w_{\mathit{ext}, k}, J_k \rangle J_k$ be the projection of $w_{\mathit{ext}, k}$ to the orthogonal complement of $\mathrm{span} \{ J_k \}$ with respect to the Euclidean $L^2$-inner product denoted by $\langle \cdot, \cdot \rangle$.  Now there is a solution of the equation $\mathcal L_S(u_{\mathit{ext}, k}) = w_{\mathit{ext}, k}^\perp$ satisfying the estimate $|u_{\mathit{ext}, k} |^\ast_{C^{2, \alpha}} \leq C | w_{\mathit{ext}, k}^\perp|^\ast_{C^{0, \alpha}}$ where $| \cdot |_{C^{k,\alpha}}^\ast$ is the scale-invariant $C^{k, \alpha}$ norm where derivatives and the H\"older coefficient are weighted by appropriate factors of $r$.   Furthermore, one has $| w_{\mathit{ext}, k}^\perp|^\ast_{C^{0, \alpha}} \leq C | w_{\mathit{ext}, k}|^\ast_{C^{0, \alpha}} \leq C \tau_{2}^{\nu - 2} |w|^\ast_{\coan}$.

A modified solution satisfying weighted estimates can be obtained as follows.  First, write $u_{\mathit{ext}, k} := v_{\mathit{ext}, k} + a_k^+ \eta_k^+ + a_k^- \eta_k^-$ where $v_{\mathit{ext}, k}$ satisfies $|v_{\mathit{ext}, k}(x)| \leq C \tau_{2}^{\nu - 2} \mathrm{dist}(x, p_k^\ast)^2 | w|_{\coan}$ near $p_k^\ast$, while the function $\eta_k^\pm$ equals one near $p_k^\pm$ and vanishes a small but $\eps$-independent distance away from these points, and $a^\pm \in \R$ satisfies $|a^\pm| \leq C \tau_{2}^{\nu - 2} |w|_{\coan}$.  This decomposition is achieved by studying the Taylor series expansion of $u_{\mathit{ext}, k}$ near $p_k^\pm$ and using the symmetries satisfied by $u_{\mathit{ext}, k}$.  Finally, by adding the correct multiple of $ J_k$ to $u_{\mathit{ext}, k}$ on each $S_k$, one can arrange to have $a_k^- = 0$ for every $k \geq 1$ and $a_0^\pm = 0$ by symmetry.  To extend the functions $u_{\mathit{ext}, k}$ to all of $\apsolf$, define $\bar u_{\mathit{ext}} := \bar v_{\mathit{ext}} + A$ where $\bar v_{\mathit{ext}} := \sum_{k} \chi^{\tau_{1}}_{\mathit{ext}, k} v_{\mathit{ext},k} $ and $  A := \sum_k a_k^+ \eta_k^+ \chi_{\mathit{neck}, k}^{\tau_{1}}$.  Then one has the estimate $|\bar v_{\mathit{ext}}|_{\ctan} +  \tau_{2}^{2 - \nu} |A|_{\ctan} \leq C | w |_{\coan}$. 

\paragraph*{Step 3.} Let $u^{(1)} := \bar v_{\mathit{ext}} + \bar u_{\mathit{neck}}$ and $\mathcal E^{(1)}(w) := - \sum_k \chi_{\mathit{ext}, k}^{\tau_{1}} \langle w_{\mathit{ext}, k}, J_k \rangle J_k - \sum_k \chi_{\mathit{ext}, k}^{\tau_{1}} \mathcal L_S(A)$.  By collecting the estimates from Steps 1 and 2, one has $|u^{(1)}|_{\ctan} \leq C |w|_{\coan}$.   The claim is that
\begin{equation}
	\label{eqn:iteration}
	|\mathcal L (u^{(1)}) - w - \mathcal E^{(1)}(w) |_{\coan} \leq \theta |w|_{\coan} \qquad \mbox{and} \qquad | \mathcal E^{(1)}(w)|_{C^{0,\alpha}_2} \leq \; C \tau_{2}^{\nu - 2} |w|_{\coan}
\end{equation}
where $\theta$ can be made as small as desired by adjusting $\tau_{1}, \ldots, \tau_{4}$ and $\eps$ suitably.  The consequence is that one can iterate Steps 1 and 2 to construct sequences $u^{(n)}$ and $\mathcal E^{(n)}(w)$ that converge to $u := \mathcal R(w)$ and $\mathcal E(w)$ respectively, satisfying the desired bounds.

Therefore to complete the proof of the proposition, it remains to compute the estimates given in \ref{eqn:iteration}.  The idea is to exploit the fact that $\mathcal L$ differs very little from $\mathcal L_N$ and $\mathcal L_S$ in the regions where $u^{(1)}$ equals $\bar v_{\mathit{ext}}$ and $\bar u_{\mathit{neck}}$, all while taking into account the effects of the cut-off functions.  With this in mind, and using the notation $[ \mathcal L, \chi](u) := \mathcal L (\chi u) - \chi \mathcal L(u)$, one has
\begin{align*}
	\mathcal L (u^{(1)}) &= (\mathcal L - \mathcal L_S) ( \bar v_{\mathit{ext}}) + \sum_k [ \mathcal L_S, \chi_{\mathit{ext}, k}^{\tau_{1}}] (v_{\mathit{ext}, k}) + \sum_k \chi_{\mathit{ext}, k}^{\tau_{1}} \mathcal L_S( v_{\mathit{ext}, k}) + \mathcal L (\bar u_{\mathit{neck}}) \\
	&=  (\mathcal L - \mathcal L_S) ( \bar v_{\mathit{ext}}) + \sum_k [ \mathcal L_S, \chi_{\mathit{ext}, k}^{\tau_{1}}] (v_{\mathit{ext}, k}) + \sum_k \chi_{\mathit{ext}, k}^{\tau_{1}} w_{\mathit{ext}, k}  \\
	&\qquad - \sum_k \chi_{\mathit{ext}, k}^{\tau_{1}} \langle w_{\mathit{ext}, k}, J_k \rangle J_k - \sum_k \chi_{\mathit{ext}, k}^{\tau_{1}} \mathcal L_S(A) + \mathcal L(\bar u_{\mathit{neck}}) \\
	&= (\mathcal L - \mathcal L_S) ( \bar v_{\mathit{ext}}) + \sum_k [ \mathcal L_S, \chi_{\mathit{ext}, k}^{\tau_{1}}] (v_{\mathit{ext}, k}) + \sum_k  \chi_{\mathit{ext}, k}^{\tau_{2}} ( w - \mathcal L( \bar u_{\mathit{neck}}) )  \\
	&\qquad - \sum_k \chi_{\mathit{ext}, k}^{\tau_{1}} \langle w_{\mathit{ext}, k}, J_k \rangle J_k - \sum_k \chi_{\mathit{ext}, k}^{\tau_{1}} \mathcal L_S(A) + \mathcal L(\bar u_{\mathit{neck}}) \\
	&= (\mathcal L - \mathcal L_S) ( \bar v_{\mathit{ext}}) + \sum_k [ \mathcal L_S, \chi_{\mathit{ext}, k}^{\tau_{1}}] (v_{\mathit{ext}, k}) + \sum_k \chi_{\mathit{ext}, k}^{\tau_{2}} w +  \sum_k \chi_{\mathit{neck}, k}^{\tau_{2}}\mathcal L( \bar u_{\mathit{neck}})   \\
	&\qquad - \sum_k \chi_{\mathit{ext}, k}^{\tau_{1}} \langle w_{\mathit{ext}, k} , J_k \rangle J_k - \sum_k \chi_{\mathit{ext}, k}^{\tau_{1}} \mathcal L_S(A) \\	
	&=  (\mathcal L - \mathcal L_S) ( \bar v_{\mathit{ext}}) + \sum_k [ \mathcal L_S, \chi_{\mathit{ext}, k}^{\tau_{1}}] (v_{\mathit{ext}, k}) +  \sum_k \chi_{\mathit{neck}, k}^{\tau_{2}} (\mathcal L - \mathcal L_N)( \bar u_{\mathit{neck}})   \\
	&\qquad - \sum_k \chi_{\mathit{ext}, k}^{\tau_{1}} \langle w_{\mathit{ext}, k},  J_k \rangle J_k - \sum_k \chi_{\mathit{ext}, k}^{\tau_{1}} \mathcal L_S(A)  + w
\end{align*}
since $\mathcal L (\bar u_{\mathit{neck}}) = ( \mathcal L - \mathcal L_N)(\bar u_{\mathit{neck}}) + \sum_k [\mathcal L_N, \chi_{\mathit{neck}, k}^{\tau_{4}}](u_{\mathit{neck}, k}) + \sum_k \chi_{\mathit{neck}, k}^{\tau_{4}} \mathcal L_N( u_{\mathit{neck}, k})$.  The facts $\chi_{\mathit{ext}, k}^{\tau_{1}} \chi_{\mathit{ext}, k}^{\tau_{2}} = \chi_{\mathit{ext}, k}^{\tau_{2}}$ and $ \chi_{\mathit{neck}, k}^{\tau_{2}}  \chi_{\mathit{neck}, k}^{\tau_{3}}  \chi_{\mathit{neck}, k}^{\tau_{4}} =  \chi_{\mathit{neck}, k}^{\tau_{2}}$ and $\chi_{\mathit{neck}, k}^{\tau_{4}} [ \mathcal L_N , \chi_{\mathit{neck}, k}^{\tau_{2}} ] = 0 $ have also been used in the calculations above.  The result now follows since both $\mathcal L - \mathcal L_S$ and $\mathcal L - \mathcal L_N$ can be handled using the estimate \eqref{eqn:lindifest} while 
$$| [ \mathcal L_S, \chi_{\mathit{ext}, k}^{\tau_{1}}] v_{\mathit{ext}, k} |_{\coan} \leq C |v_{\mathit{ext}, k}|_{\ctan(\mathrm{supp} (\nabla \chi_{\mathit{ext}, k}^{\tau_{1}}))} \leq C \left( \frac{\tau_{1}}{\tau_{2}} \right)^{2 - \nu} | w |_{\coan}$$
can be made as small as desired by adjusting the ratio $\tau_{1} / \tau_{2}$.  

The iteration leading to the exact solution of the equation $\mathcal L(u) = w + \mathcal E(w)$ now works as follows.  Using the steps above, for every $n \geq 0$ one has functions $u^{(n)} \in \ctan(\apsolf)$ and $w^{(n)} \in \coan(\apsolf)$ satisfying $\mathcal L(u^{(n)}) = w^{(n-1)} + \mathcal E( u^{(n-1)}) + w^{(n)}$ and $w^{(0)} = w$ along with the estimates
$$ |u^{(n)}|_{\ctan} + | \mathcal E(w^{(n-1)}) |_{C^{2, \alpha}_0} \leq C | w^{(n-1)} |_{\coan} \qquad \mbox{and} \qquad |w^{(n)}|_{\coan} \leq \frac{1}{2} | w^{(n-1)}|_{\coan}   \, .$$
Consequently the series $u := \sum_{n=1}^\infty (-1)^{n+1} u^{(n)}$ and $\mathcal E(w) := \sum_{n=1}^\infty (-1)^{n+1} \mathcal E (w^{(n-1)})$ converge in the appropriate norms and satisfy $\mathcal L (u) = w + \mathcal E (w)$ along with the desired estimates.
\end{proof}

The definition of the finite-dimensional image of the map $\mathcal E : \coan(\apsolf) \rightarrow \tilde{\mathcal W}^F$ is a by-product of Step 3 of the previous proof.

\begin{defn}
	Define 
	$$\tilde {\mathcal W}^F := \mathrm{span} \{  \chi_{\mathit{ext}, k}^{\tau_{1}} J_k \,  , \,  \chi_{\mathit{ext}, k}^{\tau_{1}} \mathcal L_S(\eta_k^+ \chi_{\mathit{neck}, k}^{\tau_{1}} ) : k = 1,  \ldots, K-1 \} \cup \{ \chi_{\mathit{ext}, K}^{\tau_{1}} J_K  \} \, . $$
\end{defn}

\subsubsection{The Non-Linear Estimates} 

The next task is to find estimates for the $C^{0,\alpha}_{\nu}$ norm of the quadratic remainder term 
$\mathcal Q $ and the error term $\mathcal H$ for the finite-length surface. We do this by
combining the estimates from Section \ref{sec:prelimcalc} with the specifics of the construction 
of $\apsolf$ from Section \ref{sec:approxsol}.  

\begin{lemma}
	\label{lemma:secondffsmall}
	Pick $x \in \apsolf$.  Then $x$ belongs to one of the normal coordinate charts used in the construction of $\apsolf$ where the second fundamental form with respect to the Euclidean metric is $\mathring B(x)$.  At this point, the estimate 
	$$r  \big( \| \mathring B(x) \| |f(x)| + \| \mathring\nabla f(x) \|\big)  \leq Cr^{\nu} |f|_{C^{2,\alpha}_\nu}$$
	holds for all $f \in C^{2, \alpha}_\ast (\apsolf)$, where $C$ is a constant independent of $r$, $\eps$ and $\delta$.
\end{lemma}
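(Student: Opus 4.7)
The plan is a direct case-by-case verification from the definition of the weighted H\"older norm together with the pointwise bounds on the Euclidean second fundamental form already derived in Section \ref{sec:meancurvest}. Unpacking the definition of $|\cdot|_{C^{2,\alpha}_\nu}$ immediately yields the two pointwise bounds
\begin{equation*}
|f(x)| \leq \zeta_r(x)^{\nu}\, |f|_{C^{2,\alpha}_\nu}, \qquad \|\mathring \nabla f(x)\| \leq \zeta_r(x)^{\nu-1}\, |f|_{C^{2,\alpha}_\nu},
\end{equation*}
so it suffices to establish the two inequalities $r \zeta_r(x)^{\nu-1} \leq C r^\nu$ and $r \|\mathring B(x)\| \zeta_r(x)^{\nu} \leq C r^\nu$ pointwise in $x$. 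The crucial structural fact underlying both is that $\zeta_r \leq C r$ uniformly on $\apsolf$, since $\zeta_r \equiv r$ on the spherical regions and $\zeta_r = r\|x\|$ on the necks where $\|x\| = \mathcal O(1)$.

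The gradient inequality is then immediate: because $\nu - 1 > 0$, the quantity $\zeta_r(x)^{\nu-1}$ is bounded above by $(C r)^{\nu - 1}$, so $r \zeta_r(x)^{\nu-1} \leq C r^\nu$ as required.

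For the curvature term I would split $\apsolf$ into its natural pieces and invoke the pointwise bounds on $\|\mathring B\|$ derived in Steps~1 and 2 of the proof of Proposition~\ref{prop:meancurvest}. On a spherical region away from the joining points $p_\ast^\flat$, one has $\|\mathring B\| \leq C/r$ and $\zeta_r = r$, giving $r \|\mathring B\| \zeta_r^\nu \leq C r^\nu$ trivially. On the neck region the catenoidal scaling gives $\|\mathring B\| \leq C r \eps / \zeta_r^{2}$, hence
\begin{equation*}
r \|\mathring B\| \zeta_r^{\nu} \leq C r^{2} \eps\, \zeta_r^{\nu - 2};
\end{equation*}
since $\nu - 2 < 0$ this is maximized at the smallest value $\zeta_r = r\eps$, producing $C r^{\nu} \eps^{\nu - 1} \leq C r^\nu$ because $\eps < 1$ and $\nu > 1$. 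The transition region and the perturbed spherical caps near the joining points are handled analogously using the estimates $\|\mathring B\| \leq C/(r\eps^{1/2})$ and $\|\mathring B\| \leq C r\eps/\rho^{2}$ from the same step, and in each case the resulting power of $\eps$ is non-negative precisely because $\nu \in (1,2)$.

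I do not anticipate a serious obstacle here: the weight $\zeta_r$ was engineered exactly to absorb the curvature blow-up along the necks, so the estimate ultimately reduces to checking that each power of $\eps$ which appears in the region-by-region comparison is non-negative, which is the entire content of the restriction $\nu \in (1,2)$ imposed throughout this section.
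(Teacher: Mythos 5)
Your proposal is correct and follows essentially the paper's own route: both arguments unpack the definition of the $C^{2,\alpha}_\nu$ norm and combine it with the region-by-region bounds on $\|\mathring B\|$ from Section \ref{sec:meancurvest} (which the paper compresses into the single uniform estimate $\zeta_r\|\mathring B\| \leq C$), together with $\zeta_r \leq Cr$ and $\nu>1$. The only difference is presentational — you verify $r\|\mathring B\|\zeta_r^\nu \leq Cr^\nu$ piece by piece rather than factoring out $\zeta_r^{\nu-1}$ first — and your power counting in each region is accurate.
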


\begin{proof}
	Collecting the estimates for $\mathring B$ from Section \ref{sec:meancurvest}, one finds that 
	\begin{equation}
		\label{eqn:bfestimate}
		\zeta_r (x) \| \mathring B(x) \| + \zeta_r^2(x) \| \mathring \nabla \mathring B(x) \| \leq C 
	\end{equation}
	where $C$ is some constant independent of $r$, no matter where $x$ is located in $\apsolf$.  Consequently,
	\begin{align*}
		\| \mathring B(x) \| |f(x)| + \| \mathring \nabla f(x) \| &= \zeta_r^{\nu - 1}(x) \big( \| \mathring B(x) \| \zeta_r(x) \cdot \zeta_r^{-\nu}(x) |f(x)| + \zeta_r^{1-\nu}(x) \| \mathring \nabla f(x) \| \big)
	\end{align*}
which yields the desired estimate.
\end{proof}

\noindent Hence it is true that $r ( |f| \| \mathring B \| + \| \mathring \nabla f \|) \ll 1$ can be ensured by keeping $|f|_{C^{2, \alpha}_\nu}$ small enough.  This condition validates the computations of Section \ref{sec:prelimcalc}.  The following estimates are a consequence.

\begin{prop}
	\label{prop:nonlinest}
	There exists $M>0$ so that if  $f_1, f_2 \in \ctan(\apsolf)$ for $\nu \in (1, 2)$ and satisfying $|f_1|_{C^{2,\alpha}_\nu} + |f_2|_{C^{2,\alpha}_\nu} \leq M$, then the quadratic remainder term $\mathcal Q$ satisfies the estimate
	$$| \mathcal Q( f_1) - \mathcal Q( f_2) |_{\coan} \leq C r^{\nu - 1}  |f_1 - f_2 |_{\ctan} \max \big\{ |f_1|_{\ctan}, |f_2|_{\ctan} \big\}$$
	where $C$ is a constant independent of $r$, $\eps$ and $\delta$.
\end{prop}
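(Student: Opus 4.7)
The strategy is to apply the pointwise bound from Lemma \ref{lemma:newfnquadest}, multiply by $\zeta_r^{2-\nu}$, and take the supremum to convert to the $\coan$ norm. The hypothesis of that lemma, $|f_i|\|\mathring B\|+\|\mathring \nabla f_i\|\ll 1$, is verified via Lemma \ref{lemma:secondffsmall}, which yields $|f_i|\|\mathring B\|+\|\mathring \nabla f_i\|\leq Cr^{\nu-1}|f_i|_{\ctan}\leq CMr^{\nu-1}$; this is small for $r$ small since $\nu>1$, which accounts for the role played by the threshold $M$ in the proposition.

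The conversion of the pointwise estimate into a weighted estimate rests on three ingredients. First, the computations in the proofs of Lemma \ref{lemma:secondffsmall} and Proposition \ref{prop:meancurvest} yield $\zeta_r\|\mathring B\|+\zeta_r^2\|\mathring \nabla\mathring B\|\leq C$ uniformly on $\apsolf$. Second, $\|Y(x)\|\leq C\zeta_r(x)$ everywhere: on spherical regions far from necks $\|Y\|\sim r=\zeta_r$ (using the chart at $p_k$), while on a neck (using the chart at $p_k^\flat$), a point at scaled coordinates $(t,x)$ has $|t|\leq F_{\mathit{neck}}^\pm\leq\|x\|+\mathcal O(\eps)$ since $\arccosh(y)\leq y$, so $\|Y\|=r\sqrt{t^2+\|x\|^2}\leq Cr\|x\|=C\zeta_r$. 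Third, the very definition of $\ctan$ gives $\|\mathring \nabla^j f(x)\|\leq\zeta_r(x)^{\nu-j}|f|_{\ctan}$ for $j=0,1,2$, and one has $\sup_{\apsolf}\zeta_r\leq Cr$.

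Given these, a representative term of Lemma \ref{lemma:newfnquadest} such as $|f_1-f_2|\cdot|f_i|\|\mathring B\|^3$ is bounded pointwise by $C\zeta_r^{2\nu-3}|f_1-f_2|_{\ctan}|f_i|_{\ctan}$, and multiplication by $\zeta_r^{2-\nu}$ followed by taking the supremum yields the required $Cr^{\nu-1}|f_1-f_2|_{\ctan}|f_i|_{\ctan}$. One checks identically that every other quadratic-in-$f$ term, including those containing $\|\mathring \nabla\mathring B\|$ or $\|Y\|$, produces at most a factor of $\zeta_r^{\nu-1}\leq Cr^{\nu-1}$ after weighting. The two cubic-in-$f$ terms, $\|\mathring \nabla^2(f_1-f_2)\|\cdot\|\mathring \nabla f_i\|^2$ and $\|\mathring \nabla(f_1-f_2)\|\cdot\|\mathring \nabla f_i\|\|\mathring \nabla^2 f_i\|$, acquire one extra factor of $|f_i|_{\ctan}$ and produce $\zeta_r^{2\nu-2}$; the extra $|f_i|_{\ctan}$ is absorbed using $|f_i|_{\ctan}\leq M$, and one factor of $\zeta_r^{\nu-1}$ is bounded by $Cr^{\nu-1}$ while the other is absorbed into the constant. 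The H\"older-coefficient portion of the $\coan$ norm is handled by the same scheme, using parallel transport and weighted H\"older bounds on $\mathring B$, $\mathring \nabla\mathring B$, and $Y$ coming from the construction of $\apsolf$. The only real obstacle is the sheer bookkeeping: verifying that each of the many terms of Lemma \ref{lemma:newfnquadest} acquires at least a power $\zeta_r^{\nu-1}$ after weighting by $\zeta_r^{2-\nu}$, but this power counting is completely mechanical once the three ingredients above are in place.
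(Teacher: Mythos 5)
Your proposal is correct and follows essentially the same route as the paper: verify the smallness hypothesis via Lemma \ref{lemma:secondffsmall}, apply the pointwise estimate of Lemma \ref{lemma:newfnquadest}, use the uniform bounds $\zeta_r\|\mathring B\|+\zeta_r^2\|\mathring\nabla\mathring B\|\leq C$, $\|Y\|\leq C\zeta_r$ and $\zeta_r\leq Cr$, and weight by $\zeta_r^{2-\nu}$ to extract the factor $r^{\nu-1}$ (absorbing the extra $|f_i|_{\ctan}\leq M$ from the cubic terms into the constant). Your power counting matches the paper's displayed bound $C(r^{\nu-1}+r^{2\nu-2}+r^{\nu+1}+r^{\nu+2})|f_1-f_2|_{\ctan}\max_i|f_i|_{\ctan}$, and your explicit justification of $\|Y\|\leq C\zeta_r$ simply fills in a comparability the paper asserts without proof.
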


\begin{proof}
Choose a point $x \in \apsolf$.  Then $x$ belongs to one of the normal coordinate charts used in the construction of $\apsolf$.  If $r( | f_i| \| \mathring B \|+\| \mathring \nabla f_i\|)$ is sufficiently small, it is possible to invoke Lemma \ref{lemma:newfnquadest} along with the estimate of the previous lemma and immediately deduce
\begin{align*}
		\zeta_r^{2-\nu} | \mathcal Q(f_1) -\mathcal Q(f_2) | & \leq C \zeta_r^{-\nu} | f_1 - f_2 | \cdot \max_i   \Big(|f_i| \| \mathring B \|^3 + \|\mathring \nabla f_i \| \| \mathring B \|^2 +  \|\mathring \nabla f_i \| \| \mathring \nabla \mathring B \|  \\
		&\qquad \qquad + \| \mathring \nabla^2 f_i \| \| \mathring B \| + | f_i |+ (1 + \| Y \|)  \| \mathring \nabla f_i \| \Big) \cdot \zeta_r^{2}\\
		&\qquad + C \zeta_r^{1-\nu} \| \mathring \nabla f_1 - \mathring \nabla f_2 \| \cdot \max_i \Big(  | f_i | \| \mathring B \|^2 + \|\mathring \nabla f_i \| \| \mathring B \| + | f_i | \| \mathring \nabla \mathring B \|   \\
		&\qquad \qquad + \| \mathring \nabla^2 f_i \| + (1 + \| Y \|)  | f_i |+  (1 + \| Y \|^2)  \| \mathring \nabla f_i \| +  \| \mathring \nabla f_i \| \| \mathring \nabla^2 f_i \| \Big) \cdot \zeta_r \\
		&\qquad + C \zeta_r^{2-\nu} \| \mathring \nabla^2 f_1 - \mathring \nabla^2 f_2 \| \cdot \max_i \big( |f_i| \| \mathring B \| + \| \mathring \nabla f_i \|^2 \big) \\
		&\leq C \big( r^{\nu - 1} + r^{2\nu - 2} + r^{\nu +1} + r^{\nu +2} \big) | f_1 - f_2 |_{\ctan}  \max_i \big\{ |f_i|_{\ctan} \big\} 
\end{align*}
since $1/C \leq \| Y \| \zeta_r^{-1} \leq C$ for some universal constant $C$.  The desired estimate follows.
\end{proof}

\begin{prop}
	\label{prop:nonlinesttwo}
	There exists $M>0$ so that if  $f_1, f_2 \in \ctan(\apsolf)$ for $\nu \in (1, 2)$ and satisfying $|f_1|_{C^{2,\alpha}_\nu} + |f_2|_{C^{2,\alpha}_\nu} \leq M$, then the error term $\mathcal H$ satisfies the estimate
	$$| \mathcal H( f_1) - \mathcal H( f_2) |_{\coan} \leq C r^{4} |f_1 - f_2 |_{\ctan} $$
	where $C$ is a constant independent of $r$, $\eps$ and $\delta$.
\end{prop}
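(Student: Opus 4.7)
The estimate will follow from a direct pointwise application of Lemma \ref{lemma:newfnerrorest}, combined with the two structural facts that the position vector field and the weight function both satisfy $\|Y\|\leq Cr$ and $\zeta_r\leq Cr$ everywhere on $\apsolf$. The plan is to first verify the smallness hypothesis $|f_i|\|\mathring B\|+\|\mathring\nabla f_i\|\ll 1$ that licenses Lemma \ref{lemma:newfnerrorest}; this is immediate from Lemma \ref{lemma:secondffsmall} once $M$ is taken small enough, together with the bound $\zeta_r\|\mathring B\|\leq C$ from \eqref{eqn:bfestimate}.

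Next, I would fix a point $x\in\apsolf$, work in whichever normal coordinate chart (sphere-centred or $p_k^\flat$-centred) contains $x$, and apply Lemma \ref{lemma:newfnerrorest} to obtain
\[
|\mathcal H(f_1)-\mathcal H(f_2)|\;\leq\; C\|Y\|^2\bigl(1+\|Y\|\|\mathring B\|\bigr)\bigl(|f_1-f_2|+\|Y\|\,\|\mathring\nabla(f_1-f_2)\|+\|Y\|^2\|\mathring\nabla^2(f_1-f_2)\|\bigr).
\]
Because the charts used in the construction of $\apsolf$ are centred either at $p_k$ (so $\|Y\|=r$ on $\tilde S_k$) or at $p_k^\flat$ with coordinate radius scaled by $r$, the position vector is uniformly bounded by $\|Y\|\leq Cr$. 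Combining this with $\zeta_r\|\mathring B\|\leq C$ from Step 1 of the proof of Proposition \ref{prop:meancurvest} shows $(1+\|Y\|\|\mathring B\|)\leq C$, since $\zeta_r\leq \|Y\|+r\leq Cr$.

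To pass to the weighted norm, I would use the defining inequalities $|f_i(x)|\leq\zeta_r(x)^\nu|f_i|_{\ctan}$, $\|\mathring\nabla f_i(x)\|\leq\zeta_r(x)^{\nu-1}|f_i|_{\ctan}$, $\|\mathring\nabla^2 f_i(x)\|\leq\zeta_r(x)^{\nu-2}|f_i|_{\ctan}$, so the three-term factor above is bounded by $\bigl(\zeta_r^\nu+\|Y\|\zeta_r^{\nu-1}+\|Y\|^2\zeta_r^{\nu-2}\bigr)|f_1-f_2|_{\ctan}$. Multiplying through by $\zeta_r^{2-\nu}$ gives
\[
\zeta_r^{2-\nu}|\mathcal H(f_1)-\mathcal H(f_2)|\;\leq\; Cr^2\bigl(\zeta_r^2+r\zeta_r+r^2\bigr)|f_1-f_2|_{\ctan}\;\leq\; Cr^4|f_1-f_2|_{\ctan},
\]
where the last step uses $\zeta_r\leq Cr$. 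This handles the weighted supremum part of the $\coan$ norm.

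For the weighted Hölder seminorm, the same argument is carried out on the difference quotient. Since $\mathcal H$ is a smooth nonlinear expression in $f_i,\mathring\nabla f_i,\mathring\nabla^2 f_i$ with coefficients that are polynomial in $Y$ and the ambient curvature (cf.\ the derivation preceding Lemma \ref{lemma:newfnerrorest}), differentiating and using the same bounds $\|Y\|\leq Cr$, $\zeta_r\leq Cr$ produces an identical factor of $r^4$. There is no real analytic obstacle here: the only care needed is bookkeeping to verify that in every chart used to build $\apsolf$ (spherical, neck, and transition) one still has $\|Y\|\leq Cr$ and $(1+\|Y\|\|\mathring B\|)\leq C$, which I would check region by region using the explicit scalings introduced in Section \ref{sec:approxsol}.
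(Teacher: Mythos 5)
Your argument is correct and is essentially the paper's own route: the paper disposes of this proposition with ``similar computations,'' meaning exactly what you do --- apply Lemma \ref{lemma:newfnerrorest} pointwise in each coordinate chart, use $\zeta_r \leq Cr$, $\|Y\| \leq Cr$ and the weighted-norm bounds $|f|\leq \zeta_r^{\nu}|f|_{\ctan}$, $\|\mathring\nabla f\|\leq \zeta_r^{\nu-1}|f|_{\ctan}$, $\|\mathring\nabla^2 f\|\leq \zeta_r^{\nu-2}|f|_{\ctan}$ to extract the factor $r^4$, treating the H\"older seminorm analogously. One small correction: the bound $1+\|Y\|\|\mathring B\|\leq C$ follows from $\zeta_r\|\mathring B\|\leq C$ together with $\|Y\|\leq C\zeta_r$ (the two-sided comparison $1/C\leq \|Y\|\zeta_r^{-1}\leq C$ used in the proof of Proposition \ref{prop:nonlinest}), not from $\zeta_r\leq \|Y\|+r\leq Cr$ as you wrote, though the region-by-region check you propose does verify the needed inequality.
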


\begin{proof}
	Similar computations.
\end{proof}

\subsubsection{The Fixed-Point Argument}
We are now in a position to solve the CMC equation up to a finite-dimensional error.  
Let $E :=  H \big[\apsolf \big]  - \frac{2}{r} $ and $R(r, \eps, \delta) := \max \big\{ r^{3 - \nu}  , \, r^{5-\nu} \eps^{1/2 - 3\nu/4} , \, r^{1-\nu} \eps^{3/2 - 3\nu/4}, \delta r \eps^{1 - 3\nu/4} \big\}$.   Additionally, assume $r^3 < \eps < r^2 \ll 1$ and $\delta < \eps^{1/2}$.  All of this is justified \emph{a posteriori}.  The following estimates have been established.
\begin{itemize}
	\item The mean curvature satisfies $|E|_{\coan} \leq C R(r, \eps, \delta)$.
	
	\item There is a parametrix $\mathcal R$ satisfying $\mathcal L \circ \mathcal R = \mathit{id} - \mathcal E$ where $\mathcal E$ maps into the finite-dimensional space $\tilde{\mathcal W}$ and  $| \mathcal R(w) |_{\ctan} + | \mathcal E(w)|_{C^{2, \alpha}_0} \leq C |w|_{\coan}$ for all $w \in \coan(\apsol)$.
		
	\item The quadratic remainder satisfies $| \mathcal Q(f_1) - \mathcal Q(f_2) |_{\coan} \leq C r^{\nu-1} |f_1 - f_2|_{\ctan} \max_i \big\{  |f_i|_{\ctan} \big\}$ for all $f_1, f_2 \in \ctan(\apsolf)$ with sufficiently small $\ctan$ norm.
	
	\item The error term satisfies $| \mathcal H(f_1) - \mathcal H(f_2) |_{\coan} \leq C r^4 |f_1 - f_2|_{\ctan} $ for all $f_1, f_2 \in \ctan(\apsolf)$ with sufficiently small $\ctan$ norm.

\end{itemize}

\noindent One can now assert the following.

\begin{prop}
	\label{prop:soluptocoker}
	There exists $w:= w_r(\sigma, \delta) \in \coan(\apsolf)$ and corresponding $f:= f_r(\sigma,\delta)  \in \ctan(\apsolf)$ defined by $f := \frac{1}{r} \mathcal R \big( w - H \big[\apsolf \big]  + \frac{2}{r} \big)$ so that 
	\begin{equation}
		\label{eqn:uptofindim}
		H \big[ \mu_{rf} \big( \apsolf \big) \big]  - \frac{2}{r} = - \mathcal E 
	\end{equation}
	where $\mathcal E \in \tilde{\mathcal W}^{\mathit{F}}$. The estimate $|f|_{\ctan} \leq Cr^{-1} R(r, \eps, \delta)$ holds for the function $f$, where the constant $C$ is independent of $r$, $\eps$ and $\delta$.  Finally, the mapping $(\sigma, \delta) \mapsto f(\sigma, \delta)$ is smooth in the sense of Banach spaces.
\end{prop}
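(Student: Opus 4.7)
The plan is to execute the fixed-point argument sketched in Section \ref{sec:strategy}, exploiting all four bullet points summarizing the preceding estimates. Let $E := H[\apsolf] - \frac{2}{r}$ and define the nonlinear operator $\mathcal N_r : \coan(\apsolf) \to \coan(\apsolf)$ by
\[
\mathcal N_r(w) \; := \; -\mathcal Q\big(\mathcal R(w - E)\big) \; - \; \mathcal H\big(\mathcal R(w - E)\big).
\]
First I would verify the algebraic identity: if $w$ is a fixed point of $\mathcal N_r$ and $f := \frac{1}{r}\mathcal R(w - E)$, then expanding $H[\mu_{rf}(\apsolf)] = H[\apsolf] + \mathcal L(rf) + \mathcal Q(rf) + \mathcal H(rf)$, and using $\mathcal L \circ \mathcal R = \mathit{id} - \mathcal E$ together with $w = \mathcal N_r(w)$, one finds $H[\mu_{rf}(\apsolf)] - \frac{2}{r} = -\mathcal E(w - E) \in \tilde{\mathcal W}^F$, which is exactly \eqref{eqn:uptofindim}.

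Second, I would apply the Banach fixed-point theorem to $\mathcal N_r$ on a closed ball $B_\rho := \{w \in \coan(\apsolf) : |w|_{\coan} \le \rho\}$ with $\rho := 2C_0 R(r,\eps,\delta)$, where $C_0$ is the constant from Proposition \ref{prop:meancurvest}. Since $|w - E|_{\coan} \le \rho + C_0 R \le 3 C_0 R$, the parametrix bound gives $|\mathcal R(w-E)|_{\ctan} \le C R$. Combined with Propositions \ref{prop:nonlinest} and \ref{prop:nonlinesttwo}:
\begin{align*}
|\mathcal N_r(w)|_{\coan} &\le C r^{\nu-1}|\mathcal R(w-E)|_{\ctan}^2 + C r^{4}|\mathcal R(w-E)|_{\ctan} \le C\big(r^{\nu-1}R + r^{4}\big) R,\\
|\mathcal N_r(w_1) - \mathcal N_r(w_2)|_{\coan} &\le C\big(r^{\nu-1}R + r^{4}\big)|w_1 - w_2|_{\coan}.
\end{align*}
Under the standing assumptions $r^{3} < \eps < r^{2}$ and $\delta < \eps^{1/2}$ one checks that $r^{\nu-1}R(r,\eps,\delta) \to 0$ as $r \to 0$ across all four max-terms, so for $r$ sufficiently small $\mathcal N_r$ maps $B_\rho$ into itself and is a strict contraction. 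The unique fixed point $w = w_r(\sigma,\delta)$ yields $f = \frac{1}{r}\mathcal R(w - E)$ with $|f|_{\ctan} \le \frac{1}{r}\cdot C|w-E|_{\coan} \le \frac{C}{r}R(r,\eps,\delta)$.

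Third, smoothness of $(\sigma,\delta) \mapsto f(\sigma,\delta)$ in the Banach-space sense follows because every ingredient ($\apsolf$ and hence $E$, the parametrix $\mathcal R$, and the pointwise operators $\mathcal Q$ and $\mathcal H$) depends smoothly on $(\sigma,\delta)$, and $\partial_w(w - \mathcal N_r(w)) = \mathit{id} - D\mathcal N_r(w)$ is invertible on $B_\rho$ since $\|D\mathcal N_r\|$ is dominated by the contraction constant above. The implicit function theorem in Banach spaces then produces a smooth branch of fixed points.

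The main obstacle is the bookkeeping in the contraction estimate: one must confirm, for every branch of the max defining $R(r,\eps,\delta)$, that the products $r^{\nu-1}R$ and $r^4$ are small enough that the ball of radius $2C_0 R$ is mapped into itself. This is the sole reason for the a priori restrictions $r^{3} < \eps < r^{2}$ and $\delta < \eps^{1/2}$, and it dictates the admissible range of the weight exponent $\nu \in (1,2)$. Once this check is in hand, the remaining steps are mechanical applications of the linear and nonlinear estimates established earlier.
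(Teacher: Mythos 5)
Your proposal is correct and follows essentially the same route as the paper: the same Ansatz $f = \tfrac{1}{r}\mathcal R\big(w - H[\apsolf] + \tfrac{2}{r}\big)$, the same fixed-point map $\mathcal N_r$ built from the parametrix of Proposition \ref{prop:linest} and the nonlinear estimates of Propositions \ref{prop:nonlinest} and \ref{prop:nonlinesttwo}, a contraction argument on a ball of radius comparable to $R(r,\eps,\delta)$ under $r^3 < \eps < r^2$, $\delta < \eps^{1/2}$, and smooth parameter dependence from the contraction/implicit function theorem. Your write-up is somewhat more explicit than the paper's (the algebraic identity producing the $\tilde{\mathcal W}^F$-valued error and the self-mapping check are spelled out), but there is no substantive difference in method.
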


\begin{proof}
By the last three bullet points above, the map $w \mapsto \mathcal N_r(w)$ satisfies 
\begin{align*}
	|\mathcal N_r(w_1) - \mathcal N_r(w_2) |_{\coan} &\leq C \big( r^{\nu - 1} R(r, \eps, \delta) +r^4 \big) |w_1 - w_2 |_{\coan} 
\end{align*}
where $C$ is independent of $r$ and $\eps$.  Since $r^{\nu -1 } R(r, \eps, \delta)$ and $r^4$ can be made as small as desired by a sufficiently small choice of $r$ and $\eps$ with $r^3 \leq \eps \leq r^2$ and $\delta \leq \eps^{1/2}$, it is thus true that $\mathcal N_r$ is a contraction mapping on the ball of radius $R(r, \eps, \delta)$ for such $r$, $\eps$ and $\delta$.  Hence a solution of \eqref{eqn:uptofindim} satisfying the desired estimate can be found.  The dependence of this solution on the parameters $(\sigma, \delta)$ is smooth as a natural consequence of the fixed-point process.
\end{proof}

\subsection{The One-Ended Surface}

\subsubsection{Strategy}

The strategy for solving the CMC equation \eqref{eqn:tosolve} in the case of the one-ended surface $\apsoloe$ must be modified in order to take the non-compactness of $\apsoloe$ into account.  In fact, the modification required can be understood by considering the outcome of the linear analysis, specifically the nature of the parametrix for $\mathcal L$.  In this case, the outcome of the construction the parametrix, which will mimic Proposition \ref{prop:linest} as closely as possible, will  be a parametrix $\mathcal R : \coann(\apsoloe)\rightarrow \ctann (\apsoloe) \oplus \tilde{\mathcal V}$ satisfying $\mathcal L \circ \mathcal R = \mathit{id} + \mathcal E$.  The operator $\mathcal E $ again maps into a finite-dimensional subspace $\tilde{\mathcal W}^{\mathit{OE}}$.  The subspace $\tilde{\mathcal V}$ is the new ingredient, and can be explained as follows.  First, let $J_{\mathit{Del}}^s$ for $s= 0, 1$ be the bounded and linearly growing Jacobi fields of the standard Delaunay surface and define the space 
$$\tilde {\mathcal V} := \mathrm{span} \{ \chi_{\mathit{Del}}^{\bar \tau} J_{\mathit{Del}}^1 \, , \, \chi_{\mathit{Del}}^{\bar \tau} J_{\mathit{Del}}^2 \}$$
where $\chi_{\mathit{Del}}^{\bar \tau}$ is a smooth, monotone cut-off function that transitions from zero to one in the neck region where the Delaunay end of $\apsoloe$ is attached to the finite part of $\apsoloe$.  The reason $\tilde {\mathcal V}$ is needed is simply because $\mathcal L : \ctann(\apsoloe) \rightarrow \coann(\apsoloe)$ is not surjective but becomes so when growth like the first non-decaying Jacobi fields of $\mathcal L$ is permitted.  But now the fact that one component of the solution of the linearized problem does not decay forces the modified approach that will be outlined in the next two paragraphs, since the quadratic remainder of the mean curvature will not behave appropriately for this component.  An approach similar to the one proposed below has been used in \cite{ratzkinthesis}.

To compensate for the non-decaying component of the solution of the linearized equation, one proceeds as follows.  Let $\mathcal R^{(1)}$ denote the component of $\mathcal R$ mapping into $\ctann (\apsoloe)$ and let $\mathcal R^{(2)}$ be the component of $\mathcal R$ mapping into $\tilde{\mathcal V}$.  Furthermore, if $\mathcal R^{(2)} (w) = a_1 \chi_{\mathit{Del}}^{\bar \tau} J_{\mathit{Del}}^1  + a_2 \chi_{\mathit{Del}}^{\bar \tau} J_{\mathit{Del}}^2$, then $\mathcal R^{(2)} (w) := (a_1(w), a_2(w))$ despite the slight abuse of notation that this represents.  Now the equation that needs to be solved is still 
\begin{equation}
	\label{eqn:tosolveoe}
	H \big[ \mu_{rf} \big( \apsoloe \big) \big] = \frac{2}{r}
\end{equation}
but for $f \in \coann(\apsoloe)$.  Recall that the last two free parameters of $\apsoloe$, namely $\sigma_K$ and $\delta_K$, parametrize asymptotically non-trivial deformations of $\apsoloe$.  Namely, these cause the period and location of the entire Delaunay end to change.  The idea for converting \eqref{eqn:tosolveoe} into a fixed-point problem that can be solved in the standard way, is to associate $\mathcal R^{(1)}$ with $f$ and $\mathcal R^{(2)}$ with the parameters $(\sigma_K, \delta_K)$ in an appropriate way.

This can be done as follows.  Recall that there are specific values $\sigma_K = \mathring \sigma_K$ and $\delta_K = 0$ which produce optimal matching in the assembly of $\apsoloe$.  With slight abuse of notation, write $\tilde \Sigma_r^{\mathit{OE}}(a_1, a_2) := \apsoloe$ with $(\sigma_K, \delta_K ) = (\mathring \sigma_K + a_1, a_2)$.  Given the flexibility one has in the choice of $\tilde{\mathcal V}$, one can arrange that 
$$\left. \frac{\partial}{\partial a_i} H \big[ \tilde \Sigma_r^{\mathit{OE}}(a_1, a_2) \big] \right|_{a = 0} = \mathcal L ( \chi_{\mathit{Del}}^{\bar \tau} J_{\mathit{Del}}^i )$$
for $i = 1, 2$.   Consequently, the \emph{Ans\"atze} 
\begin{align*}
	f &:= \frac{1}{r} \mathcal R^{(1)} \big(w -  H \big[ \mu_{rf} \big( \apsoloe \big) \big] + \tfrac{2}{r} \big) \\
	(a_1 , a_2) &:= \frac{1}{r} \mathcal R^{(2)} \big(w -  H \big[ \mu_{rf} \big( \apsoloe \big) \big] + \tfrac{2}{r} \big)
\end{align*} 
along with the expansion of the mean curvature into its constant, linear and quadratic and higher parts transforms the equation \eqref{eqn:tosolveoe} into the fixed-point problem
\begin{align}
	\label{eqn:cmapoe}
	w &= - \mathcal Q_{a} \circ \mathcal R^{(1)} \left(w -  H \big[ \mu_{rf} \big( \apsoloe \big) \big] + \frac{2}{r} \right) - \mathcal H \circ \mathcal R \left(w -  H \big[ \mu_{rf} \big( \apsoloe \big) \big] + \frac{2}{r} \right)
\end{align}
up to a term in $\tilde{\mathcal W}^{\mathit{OE}}$, where $\mathcal Q_a$ denotes the quadratic remainder of the mean curvature of $\tilde \Sigma_r^{\mathit{OE}} (a_1, a_2)$.  One must now show that the mapping  $\mathcal N_r : \coann(\apsoloe) \rightarrow \apsoloe$ given by the right hand side of \eqref{eqn:cmapoe} is a contraction mapping onto a neighbourhood of zero containing $H \big[\apsoloe \big]  - \frac{2}{r}$.  If so, then one has solved the equation \eqref{eqn:tosolveoe} up to a term in $\tilde{\mathcal W}^{\mathit{OE}}$.  This finite-dimensional error term must of course still be dealt with in order to find a true CMC surface near to $\apsoloe$.  This will also be carried out in Section \ref{sec:balancing}.

\subsubsection{The Linear Analysis}
\label{sec:linanalysisoe}

To begin the construction of the parametrix in the case of the one-ended surface, one must first define an additional set of partitions of unity for $\apsoloe$ as follows. Denote $\mathit{Cyl}_{\bar \tau} := \{ (x, t) \in M : \| x \| \leq r \; \; \mbox{and} \; \; t \geq \bar \tau \}$ for any $\bar \tau \in \R$ and then define $ \mathcal D^{\bar \tau} := \apsoloe \cap \mathit{Cyl}_{\bar \tau}$ as well as a smooth, monotone cut-off function $\chi^{\bar \tau}_{\mathit{Del}}$ that equals one in $\mathcal D^{\bar \tau}$ and vanishes in $\apsoloe \setminus \mathcal D^{\bar \tau - r}$. 

A second important  ingredient that will be used in the construction of the parametrix is a careful analysis of the properties of the linearized mean curvature operator of a near-degenerate Delaunay surface with respect to the $C^{k, \alpha}_{\nu, \bar \nu}$ norm.  This was carried out in \cite{mazzeopacardends} and the relevant results from \cite[\S 4]{mazzeopacardends} can be adapted to the needs of this paper and will be quoted whenever they are used in the proof given below.

\begin{prop}
	\label{prop:linestoe}
	Let $(\nu, \bar \nu) \in (1, 2) \times (-1, 0)$.  There is an operator $\mathcal R: \coann(\apsoloe) \rightarrow \ctann(\apsoloe) \oplus \tilde{\mathcal V} $ that satisfies $\mathcal L \circ \mathcal R = \mathit{id} - \mathcal E$ where $\mathcal E : \coann(\apsoloe) \rightarrow  \tilde{\mathcal W}$.  Here $\tilde{\mathcal W}^{\mathit{OE}}$ is a  finite-dimensional space that will be defined below.  The estimates satisfied by $\mathcal R$ and $\mathcal E$ are
	$$| \mathcal R(w) |_{\ctann \oplus \tilde{\mathcal V}} + |\mathcal E(w)|_{C^{2,\alpha}_{0, \bar \nu}} \leq C | w |_{\coann}$$
	for all $w \in \coan(\apsoloe)$, where $C$ is a constant independent of $r$, $\eps$, $\delta$ and $K$. 
\end{prop}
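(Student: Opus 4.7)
The plan is to construct the parametrix $\mathcal R$ by splitting the source $w$ into a piece supported on the compact part of $\apsoloe$ and a piece supported on the attached Delaunay end, inverting the appropriate model operator on each piece, and then patching via the cut-off $\chi^{\bar \tau}_{\mathit{Del}}$. A Neumann-series iteration analogous to Step 3 in the proof of Proposition \ref{prop:linest} then drives the cut-off and perturbation errors to zero, at the cost of collecting the uninvertible components into a finite-dimensional remainder $\tilde{\mathcal W}^{\mathit{OE}}$ together with a non-decaying Jacobi-field contribution in $\tilde{\mathcal V}$.

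First I would decompose $w = w_{\mathit{fin}} + w_{\mathit{Del}}$ with $w_{\mathit{fin}} := (1 - \chi_{\mathit{Del}}^{\bar\tau - r}) w$ and $w_{\mathit{Del}} := \chi_{\mathit{Del}}^{\bar\tau - r} w$. Applied to $w_{\mathit{fin}}$, which is supported away from the Delaunay end, the three-step construction of Proposition \ref{prop:linest} goes through essentially verbatim: solve on each perturbed sphere $\tilde S_k$ for $k = 0,\ldots, K$ modulo the rotationally symmetric Jacobi field $J_k$, solve on each scaled catenoidal neck $\psi_k^{-1}(\tilde N_k)$, patch using the cut-offs $\chi_{\mathit{ext}, k}^{\tau_i}$ and $\chi_{\mathit{neck}, k}^{\tau_i}$, and iterate. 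The only bookkeeping change is that the terminal sphere $\tilde S_0'$ plays the role that the mirror-reflected sphere did in the finite-length setting, contributing one additional generator to the analogue of $\tilde{\mathcal W}^F$; the weighted estimates are unchanged because the $\bar\nu$ weight is bounded on the compact part.

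Next, for $w_{\mathit{Del}}$ I would invoke the weighted linear analysis of the linearized mean curvature operator $\mathcal L_{\mathit{Del}}$ on the standard near-degenerate Delaunay surface carried out in \cite[\S 4]{mazzeopacardends}. Using the equivalence of norms noted after Definition \ref{def:normoe}, that analysis produces, for $(\nu, \bar\nu) \in (1,2) \times (-1,0)$ with $\bar\nu$ close to $0$, a right inverse $\mathcal R_{\mathit{Del}}$ whose image lies in $C^{2,\alpha}_{\nu,\bar\nu} \oplus \mathrm{span}\{J_{\mathit{Del}}^1, J_{\mathit{Del}}^2\}$ with norm independent of the necksize. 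Pulling back via the parametrization $\Xi$ used in the construction of $\tilde D^+_r(\sigma_K, \delta_K)$, then truncating with a slightly wider cut-off $\chi_{\mathit{Del}}^{\bar\tau - 2r}$ so as to extend by zero, I obtain $\bar u_{\mathit{Del}} \in \ctann(\apsoloe)$ and $(a_1, a_2) \in \R^2$ with $a_1 \chi_{\mathit{Del}}^{\bar\tau} J_{\mathit{Del}}^1 + a_2 \chi_{\mathit{Del}}^{\bar\tau} J_{\mathit{Del}}^2 \in \tilde{\mathcal V}$, satisfying $|\bar u_{\mathit{Del}}|_{\ctann} + |(a_1,a_2)| \leq C |w_{\mathit{Del}}|_{\coann}$. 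Assembling these pieces and writing out $\mathcal L(u^{(1)}) - w$ produces four families of errors, in complete parallel with the finite-length case: model-perturbation errors $(\mathcal L - \mathcal L_{\mathit{model}})(\cdot)$ controlled by Lemma \ref{lemma:newfnlinest}; commutator errors $[\mathcal L_{\mathit{model}}, \chi](\cdot)$ made small by choosing the ratios $\tau_{i}/\tau_{i+1}$ and the thickness of the transition annulus for $\chi_{\mathit{Del}}^{\bar\tau}$ suitably; projections onto the symmetric Jacobi fields $J_k$, absorbed into $\tilde{\mathcal W}^{\mathit{OE}}$; and finally the commutator $[\mathcal L, \chi_{\mathit{Del}}^{\bar\tau}] J_{\mathit{Del}}^s$ acting on the non-decaying Jacobi fields. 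Once these contributions are shown to have $\coann$ norm bounded by $\theta |w|_{\coann}$ for some fixed $\theta < 1$, iteration as in the proof of Proposition \ref{prop:linest} yields both the desired parametrix and the stated estimate.

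The main obstacle is the last family of errors, the commutator applied to $a_s J_{\mathit{Del}}^s$: since $J_{\mathit{Del}}^2$ grows linearly along the end, one must verify that the factor $T^{-\bar\nu}$ in the weighted norm, evaluated on the support of $\nabla \chi_{\mathit{Del}}^{\bar\tau}$ where $t \approx \bar\tau$, still delivers a contracting bound on the Neumann step. This is precisely the estimate that forces $\bar\nu$ to lie in $(-1,0)$ and to be chosen close to $0$, as stated in the proposition, and it is also what fixes the admissible range of the Delaunay cut-off radius $\bar\tau$ in terms of $r$ and $\eps$.
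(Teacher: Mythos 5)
Your overall architecture (solve on the finite part by the method of Proposition \ref{prop:linest}, solve on the Delaunay end via \cite[\S 4]{mazzeopacardends}, patch with cut-offs, iterate) is the same as the paper's, but there is a genuine gap at the splicing step. You split $w$ at a single junction cut-off $\chi_{\mathit{Del}}^{\bar\tau - r}$, whose transition has width of order $r$, and assert that the resulting commutator errors can be made small ``by choosing the ratios $\tau_i/\tau_{i+1}$ and the thickness of the transition annulus for $\chi_{\mathit{Del}}^{\bar\tau}$ suitably.'' For the sphere/neck cut-offs this mechanism works because $v_{\mathit{ext},k}$ vanishes quadratically at $p_k^{\pm}$ while $\nu<2$, giving the factor $(\tau_1/\tau_2)^{2-\nu}$; at the Delaunay junction, however, neither $u_{\mathit{fin}}$ nor $u_{\mathit{Del}}$ has any a priori decay toward the junction, so $[\mathcal L,\chi_{\mathit{Del}}](u)$ is generically of size $C\,|w|_{\coann}$ with $C=\mathcal O(1)$, and the Neumann iteration in your Step 3 does not contract. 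The paper supplies exactly the missing ingredient in its Step 0: one first solves a Dirichlet problem for $\mathcal L$ in the long intermediate region $\mathcal D^{t_K}\setminus\mathcal D^{t_{2K}}$ so that one may assume $w\equiv 0$ on a buffer of length comparable to $K$ periods; then, because the data for $u_{\mathit{fin}}$ is supported outside $\mathcal D^{t_K}$ and the data for $u_{\mathit{Del}}$ inside $\mathcal D^{t_{2K}}$, both local solutions decay like $\me^{-K}$ where the splicing cut-offs act, so every junction error is exponentially small and the constants remain independent of $K$ (which the statement requires). Without this buffer-plus-exponential-decay device, or a worked-out substitute (e.g. a transition region of length many periods together with a proof that the resulting gain survives identifying the middle region with both the string-of-spheres model and the Delaunay model), the iteration cannot be closed.

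Relatedly, the ``main obstacle'' you single out is misplaced. The cut-off $\chi_{\mathit{Del}}^{\bar\tau}$ appearing in $\tilde{\mathcal V}$ transitions in the neck where the end is attached to the finite part, where both $J^1_{\mathit{Del}}$ and the linearly growing $J^2_{\mathit{Del}}$ are of size $\mathcal O(1)$, so linear growth is not the issue at the junction; and with $\bar\nu$ close to $0$ the weight $T^{-\bar\nu}$ is close to $1$ and provides no gain in any case. The role of $\bar\nu\in(-1,0)$ is to make the model operator on the complete Delaunay surface solvable with only the two Jacobi-field deficiencies spanning $\tilde{\mathcal V}$ (this is where \cite{mazzeopacardends} and \cite{pacardnotes} enter), not to control junction commutators.
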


\begin{proof}
Let $w \in \coann(\apsoloe)$ be given.  The solution of the equation $\mathcal L(u) = w + \mathcal E(w)$ will be constructed broadly along the same lines as in Proposition \ref{prop:linest} in that local solutions on the spherical constituents, the necks and the Delaunay end of $\apsoloe$ will be patched together.  In this case, however, a preliminary step is needed to reduce the interaction between the Delaunay end and the finite part of $\apsoloe$.  This amounts to showing that one can assume that $w \equiv 0$ in a large part of $\apsoloe$.

\paragraph*{Step 0.}  Define $w_{\mathit{mid}} := \chi^{t_{K}}_{\mathit{Del}} \, (1 - \chi^{t_{2K}}_{\mathit{Del}}) \, w$.  Using the methods of Proposition \ref{prop:linest}, one can solve the Dirichlet problem $\mathcal L (u_{\mathit{mid}}) = w_{\mathit{mid}} + \mathcal E(w_{\mathit{mid}})$ and $u_{\mathit{mid}} = 0$ on $\partial \big[ \mathcal D^{t_{K}} \setminus \mathcal D^{t_{2K}} \big]$. The estimate one obtains is $|u_{\mathit{mid}}|_{\ctan} + | \mathcal E(w_{\mathit{mid}}) |_{C^{2, \alpha}_0} \leq C |w_{\mathit{mid}}|_{\coan} \leq C |w|_{\coann}$ where $C$ is independent of $r$, $\eps$ and $K$.  Then $u_{\mathit{mid}}$ can be extended to all of $\apsoloe$ by defining $\bar u_{\mathit{mid}} :=  \chi^{t_{K}-r}_{\mathit{Del}} \, (1 - \chi^{t_{2K}+r}_{\mathit{Del}}) \, u_{\mathit{mid}}$.    If one now solves $\mathcal L(u) = w - \mathcal L (\bar u_{\mathit{mid}})$, then the function $u + u_{\mathit{mid}}$ solves $\mathcal L(u + u_{\mathit{mid}}) = w$.  The advantage will be that the function $w - \mathcal L (\bar u_{\mathit{mid}})$ vanishes in $\mathcal D^{t_{K}} \setminus \mathcal D^{t_{2K}}$ but still satisfies the same estimates as did $w$.  One can also assume that $K$ can be as large as desired.  

\paragraph*{Step 1.} Let $w \in \coann(\apsoloe)$ be given and assume that $w \equiv 0$ in $\mathcal D^{t_{K}} \setminus \mathcal D^{t_{2K}} $.  Consider the equation $\mathcal L(u_{\mathit{fin}}) = w_{\mathit{fin}} + \mathcal E(w_{\mathit{fin}})$ where $w_{\mathit{fin}} := (1 - \chi_{\mathit{Del}}^{t_{2K}}) \, w$ and view the function on the right hand side as being defined on the finite-length surface.  Using the methods of Proposition \ref{prop:linest}, one can find a solution $u_{\mathit{fin}} \in \ctan (\apsolf)$ satisfying $|u_{\mathit{fin}}|_{\ctan} + \mathcal E(w_{\mathit{fin}}) |_{C^{0,\alpha}_0} \leq C | w |_{\coann}$ for some constant $C$ independent of $r$, $\eps$ and $K$.  Note that one must view $\mathcal D^{t_{K}} \setminus \mathcal D^{t_{2K}} $ as being very close to a union of spherical and neck regions in order to do so.   Furthermore, by carefully analyzing the iteration process that leads to the solution, the fact that $\mathit{supp} (w_{\mathit{fin}}) \subseteq \apsoloe \setminus \mathcal D^{t_{K}}$ implies that for $x \in \apsoloe$ near $\partial \mathcal D^{t_{2K}}$ one has $\zeta_r^{-\nu}(x) |u_{\mathit{fin}}(x)| \leq C \me^{-K} |w|_{\coann}$ for some constant $C$ independent of $r$, $\eps$ and $K$.  Extend $u_{\mathit{fin}}$ to all of $\apsoloe$ by defining $\bar u_{\mathit{fin}} := (1 - \chi^{t_{2K} + r}_{\mathit{Del}}) \, u_{\mathit{fin}}$.

\paragraph*{Step 2.} For $w \in \coann(\apsoloe)$ from Step 1, set $w_{\mathit{Del}} := \chi_{\mathit{Del}}^{t_{2K}} \, w$ and consider now the equation $\mathcal L_D (u_{\mathit{Del}}) = w_{\mathit{Del}}$ but viewed as an equation on the complete Delaunay surface $D_r$.   Then using the methods of \cite[\S 4]{mazzeopacardends} and \cite{pacardnotes} along with the condition $\bar \nu \in (-1, 0)$, there is a solution of this equation which can be decomposed as $u_{\mathit{Del}} = v_{\mathit{Del}} + a^1_{\mathit{Del}} \, \chi_{\mathit{Del}}^{\bar \tau} J_{\mathit{Del}}^1 + a_{\mathit{Del}}^2 \, \chi_{\mathit{Del}}^{\bar \tau} J_{\mathit{Del}}^2$.  Here $v_{\mathit{Del}} \in \ctann(D_r)$ and one has the estimate $|v_{\mathit{Del}}|_{\ctan} + |a_{\mathit{Del}}^1| + |a_{\mathit{Del}}^2| \leq C |w_{\mathit{Del}}|_{\coann}$ for some constant independent of $r$ and $\eps$.  Furthermore, because $\mathit{supp} (w_{\mathit{Del}}) \subseteq \apsoloe \cap \mathcal D^{t_{K}}$ one can arrange to have  $\zeta_r^{-\nu}(x) |u_{\mathit{fin}}(x)| \leq C \me^{-K} |w|_{\coann}$ for $x \in \apsoloe$ near $\partial \mathcal D^{t_{2K}}$ for some constant $C$ independent of $r$, $\eps$ and $K$.   Now it is possible to view $\apsoloe \cap \mathcal D^{t_{K}}$ as a graph over $D_r \cap  \mathcal D^{t_{K}}$ and hence $u_{\mathit{Del}}$ can be viewed as a function defined on $\apsoloe \cap \mathcal D^{t_{K}}$.  Extend this function to all of $\apsoloe$ by defining $\bar u_{\mathit{Del}} := \chi_{\mathit{Del}}^{t_{K} - r} \, u_{\mathit{Del}}$.
 
\paragraph*{Step 3.} The estimate of $\mathcal L(u) - w$ up to a finite-dimensional error term proceeds in the same way as in Step 3 of Proposition \ref{prop:linest}.  The extra exponential decay of $\bar u_{\mathit{fin}}$ and $\bar u_{\mathit{Del}}$ near $\partial \big[ \mathcal D^{t_{K}} \setminus \mathcal D^{t_{2K}} \big]$ ensures that the cut-off errors that arise there are small.  Consequently one can iterate the steps above and find the desired solution $\mathcal R(w) \in \ctann(\apsoloe) \oplus \tilde{\mathcal V}$ and satisfying the desired estimate.
\end{proof}

The definition of the finite-dimensional image of the map $\mathcal E : \coan(\apsoloe) \rightarrow \tilde{\mathcal W}^{\mathit{OE}}$ is once again a by-product of the previous proof.

\begin{defn}
	Define
	$$\tilde {\mathcal W}^{\mathit{OE}} := \mathrm{span} \{  \chi_{\mathit{ext}, k}^{\tau_{1}} J_k \,  , \,  \chi_{\mathit{ext}, k}^{\tau_{1}} \mathcal L_S(\eta_k^+ \chi_{\mathit{neck}, k}^{\tau_{1}} ) : k = 0,  \ldots, K-1 \} \cup \{ \chi_{\mathit{ext}, K}^{\tau_{1}} J_K \}  \, .$$
\end{defn}

\subsubsection{The Non-Linear Estimates}

The estimates for the $\coann$ norm of the quadratic remainder term $\mathcal Q_a $ in the case of the one-ended surface are very similar to the analogous estimates for the finite-length surface.   First, the result of Lemma \ref{lemma:secondffsmall} continues to hold because the calculations are essentially identical, the only difference being the need to multiply by factors of $\me^{-\bar \nu T}$ along the end of $\apsoloe$.  These growing factors are compensated for by the exponential decay assumed for the function $f$.  Consequently it is possible to make $\| B \| |f| + \| \nabla f \|$ pointwise small everywhere by choosing $|f|_{\ctann}$ sufficiently small.  

Next, the non-linear estimate analogous to Proposition \ref{prop:nonlinest} follows similarly because the terms in $\mathcal Q_a$ and $\mathcal H$ coming from the background metric decay exponentially.  One has the following results.

\begin{prop}
	\label{prop:nonlinestoe}
	There exists $M>0$ so that if  $f_1, f_2 \in \ctann(\apsoloe)$ for $\nu \in (1, 2)$ and $\bar \nu \in (-1, 0)$ sufficiently close to zero and satisfying $|f_1|_{\ctann} + |f_2|_{\ctann} \leq M$, then the quadratic remainder term $\mathcal Q_a$ satisfies the estimate
	$$| \mathcal Q_a( f_1) - \mathcal Q_a( f_2) |_{\coann} \leq r^{\nu - 1} C |f_1 - f_2 |_{\ctann} \max_i \big\{ |f_i|_{\ctann}  \big\}$$
	where $C$ is a constant independent of $r$, $\eps$ and $\delta$. 
\end{prop}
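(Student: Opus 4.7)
The plan is to mirror the proof of Proposition \ref{prop:nonlinest}, modifying it to accommodate the two new features of the one-ended setting: the doubly-weighted norm $\coann$ which also controls decay along the Delaunay end of $\apsoloe$, and the parameter-dependence of $\mathcal{Q}_a$ on the shift parameters $a = (a_1, a_2)$.

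First, I would verify that the pointwise smallness result of Lemma \ref{lemma:secondffsmall} continues to hold on $\apsoloe$. The bound $\zeta_r\|\mathring B\| + \zeta_r^2\|\mathring\nabla\mathring B\|\leq C$ of \eqref{eqn:bfestimate} carries over verbatim to the Delaunay end, since the small-necksize Delaunay surface is modelled on a periodic array of catenoidal necks at scale $r\eps_K$ and the weight $\zeta_r$ has been defined precisely to track this scale. For $f\in\ctann$ and $x\in\mathcal{C}_T$, the definition of the norm gives $\|\mathring B(x)\|\,|f(x)| + \|\mathring\nabla f(x)\| \leq C\zeta_r^{\nu-1}(x)\,T^{\bar\nu}|f|_{\ctann}$, and since $T^{\bar\nu}$ is uniformly bounded for $\bar\nu<0$, the smallness hypothesis needed to invoke Lemma \ref{lemma:newfnquadest} is satisfied everywhere. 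The constants in Lemma \ref{lemma:newfnquadest} depend only on ambient curvature at a point, so changing $(\sigma_K,\delta_K)$ by $|a|\ll 1$ leaves them uniformly controlled, giving a bound for $\mathcal{Q}_a$ uniform in $a$.

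Next, I would apply Lemma \ref{lemma:newfnquadest} pointwise and multiply by $\zeta_r^{2-\nu}$. On the finite portion $\mathcal{A}\cap\mathcal{C}$ of $\apsoloe$, the computation of Proposition \ref{prop:nonlinest} reproduces the bound $r^{\nu-1}|f_1-f_2|_{\ctann}\max_i|f_i|_{\ctann}$ verbatim. On the Delaunay end, terms of $\mathcal{Q}_a$ that are genuinely quadratic (or higher) in the $f_i$'s pick up at least an additional factor $T^{2\bar\nu}$, and combined with the weight $T^{-\bar\nu}$ from Definition \ref{def:normoe} this yields a bounded net factor $T^{\bar\nu}$ for $\bar\nu<0$; thus these terms again reproduce the $r^{\nu-1}$ bound.

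The main obstacle lies in the mixed terms of $\mathcal{Q}_a$ that are linear in $f$ but carry an ambient-curvature coefficient --- specifically the $\bar R^{(2)}$-type expressions of Lemma \ref{lemma:newfnquadest} with a single $f_i$ factor paired with the position vector $Y$ and a Riemann tensor component. A single factor of $T^{\bar\nu}$ from $f$ combines with $T^{-\bar\nu}$ from the weight to leave no decay in $T$. The resolution is the hypothesis $|S(t)|\leq Ce^{\alpha t}$ with $\alpha<0$ made in the construction of $\apsoloe$: the ambient Riemann tensor and its covariant derivatives along $\gamma$ decay exponentially at the same rate, supplying a factor $e^{\alpha T}$ which absorbs the $T^{-\bar\nu}$ weight provided $\bar\nu$ is chosen close enough to zero. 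Combining the finite-part and end-part supremums according to Definition \ref{def:normoe} then yields the claimed $\coann$-estimate.
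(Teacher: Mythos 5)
Your proposal is correct and follows essentially the same route as the paper, which only sketches this proof by remarking that the finite-length argument (Proposition \ref{prop:nonlinest}) carries over because the norm weights along the end are compensated by the decay built into $\ctann$ and because the background-metric contributions decay exponentially along $\gamma$ --- precisely the two points you elaborate. One small remark: the ``main obstacle'' you describe does not really arise, since every term in the difference estimate of Lemma \ref{lemma:newfnquadest} has the form (difference)$\times$(max over $f_i$), so the weight count already gives a bounded net factor $T^{\bar \nu}$ without invoking curvature decay; the exponential decay of the ambient curvature (which the paper does invoke) is simply an additional, harmless source of smallness for the curvature-coefficient terms.
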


\begin{prop}
	\label{prop:nonlinesttwooe}
	There exists $M>0$ so that if  $f_1, f_2 \in \ctann(\apsolf)$ for $\nu \in (1, 2)$ and $\bar \nu \in (-1, 0) $ sufficiently close to zero and satisfying $|f_1|_{C^{2,\alpha}_\nu} + |f_2|_{C^{2,\alpha}_\nu} \leq M$, then the error term $\mathcal H$ satisfies the estimate
	$$| \mathcal H( f_1) - \mathcal H( f_2) |_{\coann} \leq C r^{4} |f_1 - f_2 |_{\ctann} $$
	where $C$ is a constant independent of $r$, $\eps$ and $\delta$.
\end{prop}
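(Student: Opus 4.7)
The proof will closely parallel that of Proposition \ref{prop:nonlinesttwo}, with an additional argument needed to handle the exponentially long Delaunay end. The plan is to obtain a pointwise bound via Lemma \ref{lemma:newfnerrorest}, then translate it into the $\coann$-norm bound by tracking separately the $\zeta_r$-weight and the $T^{-\bar\nu}$ weight along $\gamma$.

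First, on the compact region $\apsoloe \cap \mathcal C$ (away from the Delaunay end) the problem is identical to the finite-length case handled in Proposition \ref{prop:nonlinesttwo}: Lemma \ref{lemma:newfnerrorest} gives $| \mathcal H(f_1) - \mathcal H(f_2) | \leq C \| Y \|^2 (1 + \| Y \| \| \mathring B \|) ( | f_1 - f_2 | + \| Y \| \| \mathring \nabla(f_1-f_2) \| + \| Y \|^2 \| \mathring \nabla^2 (f_1-f_2) \|)$, and using $\| Y \| = \mathcal{O}(r)$ in every normal coordinate chart together with $\zeta_r \| \mathring B \| \leq C$ from \eqref{eqn:bfestimate} and Lemma \ref{lemma:secondffsmall}, one obtains exactly the $Cr^4 |f_1-f_2|_{\ctann}$ estimate after multiplying by $\zeta_r^{2-\nu}$ and applying the definition of the $\coann$ norm.

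Second, on the Delaunay end $\apsoloe \cap \mathcal C_T$ for $T \geq \bar T$, the point of leverage is that the constant $C$ appearing in Lemma \ref{lemma:newfnerrorest} depends only on the curvature tensor of the ambient manifold at the center of the normal coordinate chart. By hypothesis, the scalar curvature (and hence the full curvature tensor, by the axial symmetry and the metric form \eqref{eqn:metricform}) decays like $\me^{\alpha t}$ with $\alpha < 0$ along $\gamma$. Thus on a chart centered at arclength $T$ along $\gamma$, Lemma \ref{lemma:newfnerrorest} yields an extra prefactor $\me^{\alpha T}$. Carrying out exactly the same pointwise calculation as in the finite-length case one obtains
\begin{equation*}
\zeta_r^{2-\nu}(x) |\mathcal H(f_1)-\mathcal H(f_2)|(x) \leq C r^4 \, \me^{\alpha T} \, T^{\bar\nu} \, |f_1-f_2|_{\ctann},
\end{equation*}
where the factor $T^{\bar\nu}$ is what the $\ctann$ norm of $f_1-f_2$ allows in $\mathcal C_T$. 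Multiplying by $T^{-\bar\nu}$ and taking the supremum over $T$, the resulting factor $\me^{\alpha T}$ is uniformly bounded by $1$, and in fact makes this contribution negligible compared to the bulk term from the compact piece.

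Together these two estimates give $|\mathcal H(f_1) - \mathcal H(f_2)|_{\coann} \leq C r^4 |f_1-f_2|_{\ctann}$. The H\"older-seminorm part of the estimate is handled by an essentially identical but more tedious computation, using in addition that the derivatives of the curvature tensor also satisfy the exponential decay bound, so that the $[\, \cdot\, ]_\alpha$ seminorm of the $\mathcal H$-coefficients on a chart centered at arclength $T$ acquires the same $\me^{\alpha T}$ prefactor. The only subtle point to check is that this argument is compatible with any $\bar \nu \in (-1,0)$ sufficiently close to $0$: the polynomial weight $T^{-\bar\nu}$ is always dominated by the exponential decay $\me^{\alpha T}$, so no constraint on $\bar\nu$ is needed beyond that already required by the linear analysis of Proposition \ref{prop:linestoe}. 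The main (mild) obstacle is bookkeeping the various $\zeta_r$- and $T$-weights so that all factors of $r$ combine correctly to produce $r^4$ and not some smaller power.
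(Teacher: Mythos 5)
Your proposal is correct and follows essentially the same route as the paper, which disposes of this proposition with the brief observation that the pointwise estimates of the finite-length case (Lemma \ref{lemma:newfnerrorest} with $\|Y\| = \mathcal O(r)$ and $\zeta_r \|\mathring B\| \leq C$) carry over because the terms of $\mathcal H$ coming from the background metric decay exponentially along the end, which dominates the weight factors in the $\coann$ and $\ctann$ norms. Your bookkeeping of the $T^{-\bar\nu}$ and $T^{\bar\nu}$ factors (and the remark that the curvature decay then makes the end contribution harmless) is exactly the intended argument.
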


\subsubsection{The Fixed-Point Argument}

The fixed-point argument in the case of the one-ended surface is again broadly similar to the argument in the case of the finite-length surface.  However, the strategy adopted for dealing with the non-decaying component of the parametrix requires 
some additional care.  As before, let $E :=  H \big[\apsoloe \big]  - \frac{2}{r} $ and $R(r, \eps, \delta) := \max \big\{ r^{3 - \nu}  , \, r^{5-\nu} \eps^{1/2 - 3\nu/4} , \, r^{1-\nu} \eps^{3/2 - 3\nu/4}, \delta r^{1-\nu} \eps^{1-3\nu/4} \big\}$.  Additionally, assume $r^3 < \eps < r^2 \ll 1$ and $\delta < \eps^{1/2}$ as before.  The following have been established.
\begin{itemize}
	\item The mean curvature satisfies $|E|_{\coann} \leq C R(r, \eps, \delta)$.
	
	\item There is a parametrix $\mathcal R$ satisfying $\mathcal L \circ \mathcal R = \mathit{id} - \mathcal E$ where $\mathcal E$ maps into the finite-dimensional space $\tilde{\mathcal W}$ and $| \mathcal E(w)|_{C^{2, \alpha}_0} \leq C |w|_{\coann}$ for all $w \in \coann(\apsoloe)$.  But now $\mathcal R$ decomposes as $\mathcal R^{(1)} + \mathcal R^{(2)}$ and $| \mathcal R^{(1)}(w) |_{\ctann} + \| \mathcal R^{(2)}(w) \|_{\tilde{\mathcal V}}  \leq C |w|_{\coann}$ for all $w \in \coann(\apsoloe)$.
		
	\item The quadratic remainder satisfies $| \mathcal Q_a(f_1) - \mathcal Q_a(f_2) |_{\coann} \leq C r^{\nu-1} |f_1 - f_2|_{\ctann} \max_i \big\{  |f_i|_{\ctann} \big\}$ for all $f_1, f_2 \in \ctann(\apsoloe)$ with sufficiently small $\ctann$ norm.
	
	\item The error term satisfies $| \mathcal H(f_1) - \mathcal H(f_2) |_{\coann} \leq C r^4 |f_1 - f_2|_{\ctann} $ for all $f_1, f_2 \in \ctann(\apsolf)$ with sufficiently small $\ctann$ norm.
	
\end{itemize}

\noindent One can now assert the following.  It's proof is entirely analogous to the proof of Proposition \ref{prop:soluptocoker}.

\begin{prop}
	\label{prop:soluptocokeroe}
	There exists $w:= w_r(\sigma, \delta) \in \coann(\apsoloe)$, corresponding $f:= f_r(\sigma, \delta)  \in \ctann(\apsoloe)$ and $(a_1, a_2) \in \tilde {\mathcal V}$  defined by $f := \frac{1}{r} \mathcal R^{(1)} \big( w - H \big[\apsoloe \big]  + \frac{2}{r} \big)$ and $(a_1, a_2) :=  \mathcal R^{(2)} \big( w - H \big[\apsoloe \big]  + \frac{2}{r} \big)$ so that 
	\begin{equation*}
		H \big[ \mu_{rf} \big( \tilde \Sigma_r^{\mathit{OE}} (a_1, a_2) \big) \big]  - \frac{2}{r} = - \mathcal E
	\end{equation*}
	where $\mathcal E \in \tilde{\mathcal W}^{\mathit{OE}}$.  The estimate $|f|_{\ctann} \leq Cr^{-1} R(r, \eps, \delta)$ holds for the function $f$, where the constant $C$ is independent of $r$, $\eps$ and $\delta$.   Finally, the mapping $(\sigma, \delta) \mapsto f(\sigma, \delta)$ is smooth in the sense of Banach spaces.

\end{prop}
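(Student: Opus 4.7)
The plan is to repeat the contraction-mapping strategy of Proposition \ref{prop:soluptocoker}, with the caveat that now one must simultaneously solve for the deformation function $f$ and the reshaping parameters $(a_1, a_2)$ governing the asymptotic period and position of the Delaunay end. Concretely, define the map $\mathcal N_r : \coann(\apsoloe) \to \coann(\apsoloe)$ by the right hand side of \eqref{eqn:cmapoe}, with $f$ and $(a_1, a_2)$ understood as the slaved ans\"atze $f = \frac{1}{r} \mathcal R^{(1)}(w - H[\apsoloe] + \frac{2}{r})$ and $(a_1, a_2) = \mathcal R^{(2)}(w - H[\apsoloe] + \frac{2}{r})$. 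The aim is to show that $\mathcal N_r$ is a contraction on the closed ball of radius $C R(r, \eps, \delta)$ in $\coann(\apsoloe)$, for a sufficiently large constant $C$.

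To derive the fixed-point equation, first expand the mean curvature using \eqref{eqn:meancurvfnexp} relative to the base $\tilde \Sigma_r^{OE}(a_1, a_2)$ and then Taylor expand $H[\tilde \Sigma_r^{OE}(a_1, a_2)]$ in $(a_1, a_2)$ about the origin. By construction, $\partial_{a_i} H[\tilde \Sigma_r^{OE}]|_{a = 0} = \mathcal L(\chi_{\mathit{Del}}^{\bar \tau} J_{\mathit{Del}}^i)$, so the first-order contribution in $(a_1, a_2)$ exactly cancels with $\mathcal L \circ \mathcal R^{(2)}$ when the ansatz is substituted, while $\mathcal L \circ \mathcal R^{(1)}$ accounts for the genuinely decaying part of the linearized problem. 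Invoking $\mathcal L \circ \mathcal R = \mathit{id} - \mathcal E$ reduces the CMC equation $H = 2/r$ to \eqref{eqn:cmapoe}, up to an error term lying in $\tilde{\mathcal W}^{OE}$. The quadratic and higher terms in $(a_1, a_2)$ coming from this Taylor expansion are absorbed into $\mathcal Q_a$ (abusing notation slightly) since $|a_i| = \mathcal O(R(r, \eps, \delta))$ and the $\partial_a$ variation of the base surface is supported in the Delaunay end where geometric quantities are bounded.

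Next, verify contractivity. Combining Proposition \ref{prop:linestoe} with Propositions \ref{prop:nonlinestoe} and \ref{prop:nonlinesttwooe} gives, for $w_1, w_2$ in the ball of radius $C R(r, \eps, \delta)$,
$$
|\mathcal N_r(w_1) - \mathcal N_r(w_2)|_{\coann} \leq C\bigl( r^{\nu - 1} R(r, \eps, \delta) + r^4 \bigr) |w_1 - w_2|_{\coann}.
$$
Under the assumed regime $r^3 < \eps < r^2 \ll 1$ and $\delta < \eps^{1/2}$, both coefficients on the right become arbitrarily small with $r$, so $\mathcal N_r$ is a strict contraction. Moreover $|\mathcal N_r(0)|_{\coann}$ is controlled by $|H[\apsoloe] - \frac{2}{r}|_{\coann} \leq C R(r, \eps, \delta)$ via Proposition \ref{prop:meancurvestoe}, so $\mathcal N_r$ maps this ball into itself. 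Banach's theorem yields a unique fixed point $w = w_r(\sigma, \delta)$, and unwinding the ans\"atze delivers $f$ and $(a_1, a_2)$ with the desired $\ctann$ and $\tilde{\mathcal V}$ bounds. Smoothness of $(\sigma, \delta) \mapsto f(\sigma, \delta)$ follows from smooth parametric dependence of every ingredient (mean curvature operator, parametrix, quadratic remainder) together with the standard implicit-function-theorem consequence of the uniform contraction.

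The main technical obstacle to watch is that the parameters $(a_1, a_2)$ multiply the non-decaying Jacobi fields $J_{\mathit{Del}}^i$, so naively one loses the asymptotic control needed to close the nonlinear estimates on the Delaunay end. This is circumvented precisely because $\mathcal Q_a$ and $\mathcal H$ are measured in the $\bar \nu$-weighted norm with $\bar \nu < 0$: the exponentially decaying metric perturbation along the end (guaranteed by the assumption $|S(t)| \leq C e^{\alpha t}$) kills any additional growth coming from the non-decaying Jacobi fields, so both $\mathcal Q_a$ and the base-surface remainder $\mathcal A(a_1, a_2)$ continue to take values in $\coann$ with the stated estimates. Verifying this compatibility of $\bar \nu$ with the decay rate $\alpha$ of the background geometry is the single nontrivial point; once it is checked, the argument runs in exact parallel to the finite-length case.
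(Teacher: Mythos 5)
Your proposal is correct and follows essentially the same route as the paper: the paper proves this proposition by declaring it entirely analogous to Proposition \ref{prop:soluptocoker}, i.e.\ running the contraction mapping for $\mathcal N_r$ on a ball of radius comparable to $R(r,\eps,\delta)$ using exactly the four ingredients you cite (Propositions \ref{prop:meancurvestoe}, \ref{prop:linestoe}, \ref{prop:nonlinestoe}, \ref{prop:nonlinesttwooe}), with the slaved ans\"atze for $f$ and $(a_1,a_2)$ and the cancellation $\partial_{a_i} H|_{a=0} = \mathcal L(\chi_{\mathit{Del}}^{\bar\tau} J_{\mathit{Del}}^i)$ already set up in the strategy subsection. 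Your extra remarks on absorbing the higher-order $(a_1,a_2)$ terms into $\mathcal Q_a$ and on the compatibility of $\bar\nu$ with the exponential decay of the background curvature simply make explicit what the paper handles in its strategy and nonlinear-estimate discussion.
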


\section{Solving the Finite-Dimensional Problem}
\label{sec:balancing}

\subsection{Strategy}
It has now been established for both families of surfaces that if $r$, $\sigma$ and $\delta$ are sufficiently 
small, one can find $w_r(\sigma, \delta)  \in C^{0, \alpha}_\ast(\apsol)$ and corresponding 
$f_r(\sigma, \delta) \in C^{2, \alpha}_\ast(\apsol)$ so that 
$$H \big[ \mu_{f_r(\sigma, \delta)} \big(\apsol \big) \big]  - \frac{2}{r}  = \mathcal E_r (\sigma, \delta) $$
where $\mathcal E_r (\sigma, \delta)$ is an error term belonging to the finite-dimensional space 
$\tilde{\mathcal W}^\ast$ and whose dependence on the free parameters in $\apsol$ has been indicated explicitly. 
To complete the proof of the main theorem, we must show that it is possible to find a solution where these 
error terms vanish identically.  

Consider the \emph{balancing map} defined by
\begin{equation}
	\label{eqn:balmapdef}
	B_r(\sigma, \delta) := \pi \big( \mathcal E_r (\sigma, \delta)  \big)
\end{equation}
where $\pi :  \tilde{\mathcal W}^\ast \rightarrow \R^{d}$ is a suitable bounded projection operator, where $d$ is the dimension of $\tilde {\mathcal W}^\ast$. (The operator $\pi$ will be a certain bijective $L^2$-orthogonal projection onto a finite-dimensional vector space.)  Note that $B_r$ is a smooth map between finite-dimensional vector spaces by virtue of the fact that the dependence of the solution $f_r(\sigma, \delta)$ on $(\sigma, \delta)$ is smooth and the mean curvature operator is a smooth map of the Banach spaces upon which it is defined.   It will be shown using the implicit function theorem for finite-dimensional maps that for every sufficiently small $r>0$, there exists $(\sigma, \delta ) := (\sigma(r), \delta(r))$ for which $B_r(\sigma, \delta) \equiv 0$ identically.  It is at this stage that the precise nature of the scalar curvature of the background manifold $M$ enters the picture: the behaviour of the scalar curvature along the geodesic $\gamma$ enters into the selection of the parameters $\sigma$ and $\delta$ to first approximation.

\subsection{The Balancing Formula}

The projection operators that will be used to study the finite-dimensional error $\mathcal E (w_r (\sigma, \delta))$ in the case of the finite-length surface and in the case of the one-ended surface can be defined as follows.  For $k = 0, \ldots, K$ let $J_k : S_k \rightarrow \R$ have its usual meaning; and let $I_k : r \eps_k N_k \rightarrow \R$ be the function defined by $I_k(x) := \|x \| (\| x \|^2 - \eps_k^2)^{-1/2}$ using the coordinates of the neck introduced in Section \ref{sec:approxsol}.  This latter function is in the kernel of the linearized mean curvature operator of the catenoid with respect to the Euclidean background metric; it is an odd function with respect to the center of the catenoid and is asymptotic to $\pm 1$.  Now for convenience let  $f := f_r(\sigma, \delta)$ and $\Sigma_f := \mu_{rf} (\apsol)$ denote the solution found in the previous section and define $\pi_k : \tilde{\mathcal W}^\ast \rightarrow \R^{2}$ by
$$ \pi_k (e) := \left( \int_{\Sigma_f} e \cdot \chi_{\mathit{neck}, k}^{\tau_{1}} I_k \, \dif \vol_g \, , \int_{\Sigma_f} e \cdot \chi_{\mathit{ext}, k}^{\tau_{4}} J_k \, \dif \vol_g   \right) \, .
$$
The notation for the cut-off functions from Section \ref{sec:linanalysis} has been used here.  A consequence of the following lemma is that if $\pi(e) = 0$ then $e=0$.  The proof is a straightforward computation.

\begin{lemma}
	\label{lemma:projop}
	Choose $e \in \tilde{\mathcal W}$ and write $e = \sum_{k=1}^K \big( a_k \chi_{\mathit{ext}, k}^{\tau_{1}} J_k + b_k \chi_{\mathit{ext}, k}^{\tau_{1}} \mathcal L_S (\eta_k^+ \chi_{\mathit{neck}, k}^{\tau_{1}} ) \big)$ for some $a_k, b_k \in \R$.  Then 
	$$ \pi_k(e) = \big( C_1 b_k + C_1' r^2 (\eps_k^{3/2} a_k - \eps_{k+1}^{3/2} a_{k+1}) \, , \, C_2 r^2 a_k  \big) $$
	where $C_1, C_1', C_2$ are constants independent of $r$, $\eps$ and $\delta$. 
\end{lemma}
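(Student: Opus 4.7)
The plan is to exploit the multi-scale localization of the cutoff functions $\chi_{\mathit{neck},k}^{\tau_{1}}$ and $\chi_{\mathit{ext},k}^{\tau_{4}}$, which reduce $\pi_k(e)$ to a short list of integrals each supported either in the transition annuli around the $k$-th neck or on a single sphere. By linearity of $\pi_k$ in $e$, it suffices to compute $\pi_k$ separately on each basis vector $\chi_{\mathit{ext},j}^{\tau_{1}} J_j$ and $\chi_{\mathit{ext},j}^{\tau_{1}} \mathcal L_S(\eta_j^{+}\chi_{\mathit{neck},j}^{\tau_{1}})$ for $j = 1,\ldots, K$, and then to collect the surviving cross terms using the facts that $\Sigma_f$ is a small normal perturbation of $\apsolf$ (so volume and pointwise integrands agree with their Euclidean counterparts up to negligible factors) and that $\tau_{1} < \tau_{4}$.

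For the second coordinate, the test function $\chi_{\mathit{ext},k}^{\tau_{4}} J_k$ is essentially supported on $S_k$, so the only leading-order contribution comes from the diagonal pairing $j = k$ with $a_k$; this reduces to the self $L^2$ pairing of $J_k$ on $S_k$ and yields $C_2 r^2 a_k$, the $r^2$ tracking the area scaling of the radius-$r$ sphere under the normalization of $J_k$. For $j \neq k$ the supports of $\chi_{\mathit{ext},j}^{\tau_{1}}$ and $\chi_{\mathit{ext},k}^{\tau_{4}}$ are disjoint, so these pairings vanish identically. The $b_j$ contributions vanish to leading order by integration by parts: transferring $\mathcal L_S$ onto $\chi_{\mathit{ext},k}^{\tau_{4}} J_k$ and using $\mathcal L_S J_k \equiv 0$ on $S_k$ leaves only cutoff-commutator terms supported on the transition annuli, which are of lower order in $r$ and $\eps$.

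For the first coordinate, the test function $\chi_{\mathit{neck},k}^{\tau_{1}} I_k$ is supported in the $k$-th neck region together with its two transition collars, and three contributions survive. The $b_k$ term couples through $\int \mathcal L_S\big(\eta_k^{+}\chi_{\mathit{neck},k}^{\tau_{1}}\big) \cdot I_k\, \dif\vol_g$ over the transition region near $p_k^{+}$; the scalings $\mathcal L_S \sim 1/r^2$ against the area element $\sim r^2$ combine to give an $r$- and $\eps$-independent constant $C_1$. The $a_k$ and $a_{k+1}$ terms couple via integrals supported on the two sides of the $k$-th neck: in the rescaled coordinates at $p_k^{\flat}$, the overlap annulus sits at $\|x\|\sim \eps^{3/4}$ and has area $\mathcal O(\eps^{3/2})$, while $J_j$ and $I_k$ are each asymptotically of order unity there, so the rescaled integrals are $\mathcal O(\eps^{3/2})$ with the prefactor $r^2$ coming from the Jacobian of the $r$-rescaling of the coordinates. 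The opposite orientations of $S_k$ and $S_{k+1}$ relative to the positive $x^{0}$-direction near $p_k^{\flat}$ then supply the sign difference between the $a_k$ and $a_{k+1}$ contributions, and all other cross-pairings (including the $b_j$ terms with $j \neq k$) vanish by support considerations.

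The main bookkeeping obstacle will be the careful matched-asymptotics computation required to pin down the constants $C_1$, $C_1'$, $C_2$ and to verify that they are indeed independent of $r$, $\eps$, and $\delta$. This requires composing the Taylor expansions of $F^{\pm}_{\mathit{sph}}$ and $F^{\pm}_{\mathit{neck}}$ from Step 2 of the construction of $\apsolf$ with the asymptotic behaviour of $J_k$ near $p_k^{\pm}$ and of $I_k$ at the ends of the scaled catenoid, and then unwinding the relations $\eps_k^{+} = \eps_k/C_k^{+}$ and $\eps_{k+1}^{-} = \eps_k/C_{k+1}^{-}$ in order to express the two annular integrals as multiples of $\eps_k^{3/2}$ and $\eps_{k+1}^{3/2}$ with the claimed sign. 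Once the formula is in hand the injectivity of $\pi = (\pi_1,\ldots,\pi_K)$ is immediate: the second component of each equation $\pi_k(e) = 0$ forces $a_k = 0$, and the first component then forces $b_k = 0$.
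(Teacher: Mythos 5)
Your overall plan --- expand $e$ by linearity, localize each pairing using the supports of the cut-offs, and evaluate the surviving diagonal and nearest-neighbour terms by scaling --- is exactly the ``straightforward computation'' the paper alludes to (it records no further details), and the structural output you extract (the second component sees only $a_k$, the first has an $O(1)$ coefficient on $b_k$ plus small $a$-terms, whence injectivity of $\pi$) is all the lemma is used for later. However, the middle of your computation, which is the actual content of the displayed formula, does not hold together as written. First, the $b_j$-contributions to the second component vanish identically, not merely ``to leading order'': $\mathcal L_S$ is a local operator, so $\chi_{\mathit{ext},k}^{\tau_1}\mathcal L_S(\eta_k^+\chi_{\mathit{neck},k}^{\tau_1})$ is supported inside $B_{\tau_1}(p_k^\flat)$, where $\chi_{\mathit{ext},k}^{\tau_4}\equiv 0$ because the transition annuli of the $\tau_1$- and $\tau_4$-cutoffs are disjoint by the choice of radii in the proof of Proposition \ref{prop:linest}; no integration by parts or commutator estimate is needed, and your version would insert spurious error terms into an exact identity.

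Second, you assign three different scales to what is one and the same overlap region. The supports of $\chi_{\mathit{ext},k}^{\tau_1}$ and $\chi_{\mathit{neck},k}^{\tau_1}$ meet in a single annulus of radius comparable to $\tau_1$ around $p_k^\flat$, and in the paper $\tau_1$ is simply a radius $\ll r$, not tied to the matching scale $r\eps_k^{3/4}$. Hence the $C_1$-term cannot be justified by ``$\mathcal L_S\sim 1/r^2$ against area $\sim r^2$'': on that annulus the dominant part of $\mathcal L_S(\eta_k^+\chi_{\mathit{neck},k}^{\tau_1})$ is the Laplacian of a cutoff at scale $\tau_1$, of size $\tau_1^{-2}$, and the $(r,\eps,\delta)$-independent constant arises from the two-dimensional scale invariance of a pairing of the type $\int (1-\chi)\chi\,\mathring\Delta\chi\, I_k$, not from $\|\mathring B\|^2\sim r^{-2}$ integrated over the whole sphere. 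Likewise, if the overlap annulus sits at radius $\tau_1$, the naive size of $\int \chi_{\mathit{ext},k}^{\tau_1}J_k\,\chi_{\mathit{neck},k}^{\tau_1}I_k$ is $O(\tau_1^2)$, not $O(r^2\eps_k^{3/2})$; to produce the stated $\eps^{3/2}$-factors one must either take $\tau_1$ comparable to $r\eps_k^{3/4}$ (which contradicts the $r$-scale you use for the $C_1$-term) or exhibit a cancellation of the $O(\tau_1^2)$ contribution, and your sketch does neither. Finally, the sign: $I_k$ is odd across the neck, but the $t$-components of the outward normals of $S_k$ and $S_{k+1}$ also have opposite signs at $p_k^+$ and $p_{k+1}^-$, so $J_kI_k$ and $J_{k+1}I_k$ have the \emph{same} sign near $p_k^\flat$; the ``opposite orientations'' argument therefore does not by itself yield the minus sign in the formula of Lemma \ref{lemma:projop}. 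None of this affects the only property used downstream (triangularity with $C_1,C_2\neq 0$, hence $\pi(e)=0\Rightarrow e=0$), but as a derivation of the stated identity the proposal would not go through as written.
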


A fundamental application of the expansions of the mean curvature found in Proposition \ref{prop:expansions} and equation \eqref{eqn:geomcexp} from Section \ref{sec:gnormcoord} is to derive a formula relating $\pi( \mathcal E (w_r(\sigma, \delta))$ to the geometry of $\apsol$.  It is via this formula that the location of the spheres and necks in $\apsol$ and the background geometry of $M$ conspire to determine when a nearby CMC surface can be found.

\begin{prop}
	\label{prop:integralexp}
	Let $\Sigma_f := \mu_{f} (\Sigma_r(\sigma, \delta))$ and $f := f_r(\sigma, \delta)$ for convenience.  Then the mean curvature of $\Sigma_{f}$ satisfies the formul\ae\
	\begin{subequations}
	\label{eqn:balform}
	\begin{align}
		&\begin{aligned}
			\label{eqn:neckbalformula}
			\int_{\Sigma_{f}} \!\! \left( H[ \Sigma_f] - \tfrac{2}{r} \right) \chi_{\mathit{neck}, k}^{\tau_{1}} I_k \, \dif \vol_g &=  C_0 \delta_k r  \eps^{3/2}  \\[-1ex]
			&\qquad + \mathcal O(\eps^{11/4} r^2) + \mathcal O( r^2 \eps^{3/2}) +  \mathcal O(  r^\nu R(r, \eps, \delta) \eps^{2})
		\end{aligned} \\[1ex]
		&\,\,\begin{aligned}
			\label{eqn:sphbalformula}
			\int_{\Sigma_{f}} \!\! \left( H[\Sigma_f] - \tfrac{2}{r} \right) \chi_{\mathit{ext}, k}^{\tau_{4}} J_k \, \dif \vol_g &= r  \big( C_1 \eps_{k+1} + C_1' \eps_{k+1}^{3/2} \big) - r \big( C_1 \eps_{k} + C_1' \eps_{k}^{3/2} \big) -C_2 r^4 \dot S(p_k) \\[-1ex]
			&\qquad + \mathcal O(\eps r^3) + \mathcal O( r^2 \eps^{3/2}) +  \mathcal O(  r^2 R(r, \eps, \delta))
		\end{aligned}
	\end{align}
	\end{subequations}
where $C_1, C_1', C_2$ are constants independent of $r$, $\eps$ and $\delta$; and $\dot S (p_k) := g ( \nabla_{\dot \gamma(t_k)} S(p_k), \dot \gamma (t_k) )$ is the component of the gradient of the scalar curvature of the ambient metric along the geodesic $\gamma$ at $p_k$.
\end{prop}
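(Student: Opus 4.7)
The plan is to expand
$$H[\Sigma_f] - \tfrac{2}{r} = \big( H[\apsol] - \tfrac{2}{r} \big) + \mathcal L(rf) + \mathcal Q(rf) + \mathcal H(rf),$$
multiply by the test function $\Phi_k := \chi_{\mathit{neck},k}^{\tau_{1}} I_k$ (for the neck formula) or $\Phi_k := \chi_{\mathit{ext},k}^{\tau_{4}} J_k$ (for the sphere formula), and integrate against $\dif \vol_g$. The key observation is that $I_k$ and $J_k$ are translation Jacobi fields of the catenoid and sphere respectively, so they lie in the kernels of the Euclidean linearisations $\mathring {\mathcal L}_N$ and $\mathring {\mathcal L}_S$. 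Integration by parts will therefore convert $\int \mathcal L(rf) \, \Phi_k \, \dif \vol_g$ into three pieces: (i) terms supported in the transition annuli where $\nabla \chi \ne 0$, yielding boundary data from the matched asymptotics of Section \ref{sec:approxsol}; (ii) terms of the form $\int rf \cdot (\mathcal L - \mathring{\mathcal L})(\Phi_k) \, \dif \vol_g$, controlled by Lemma \ref{lemma:newfnlinest}; and (iii) errors from replacing $\dif\vol_g$ by $\dif\vol_{\mathring g}$, controlled by the expansion \eqref{eqn:metricexp}.

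For the sphere formula \eqref{eqn:sphbalformula} I would proceed in two steps. First, since $\mathring H = 2/r$ on $\tilde S_k$, the contribution from $H[\apsol] - 2/r$ comes entirely from the ambient metric corrections in \eqref{eqn:geomcexp}; the linear-in-curvature part $-\tfrac{2}{3}\ric(Y, \mathring N) - \tfrac{1}{2}\bar\nabla_Y \ric(Y,\mathring N) + \tfrac{1}{12} \bar\nabla_{\mathring N}\ric(Y, Y) - \tfrac{1}{6} \bar\nabla_{\mathring N} \riem(\mathring N, Y, \mathring N, Y)$, integrated against $J_k = \langle \mathring N, \partial_t\rangle$ over a sphere of radius $r$ and simplified using axial symmetry and the contracted Bianchi identity, should produce $-C_2 r^4 \dot S(p_k)$ with an $\mathcal O(\eps r^3)$ remainder from the $\tilde S_k$-perturbation. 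Second, I would treat $\int \mathcal L(rf) \, \chi^{\tau_{4}}_{\mathit{ext},k} J_k \, \dif \vol_g$ via integration by parts; the boundary data localise in small annuli around $p_k^\pm$ where $rf$ matches the logarithmic profile $r\eps_k^\pm (c_k^\pm + C_k^\pm \log\|x\|)$ constructed in Step 1 of Section \ref{sec:approxsol}. Pairing this profile and its normal derivative against $J_k$ and $\partial_\nu J_k$ should produce the matched pair $\pm r(C_1 \eps_{k\pm 1} + C_1' \eps_{k\pm 1}^{3/2})$, with signs fixed by the outward conormal at $p_k^\pm$.

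For the neck formula \eqref{eqn:neckbalformula} I would exploit the fact that $I_k$ is odd about the centre of the catenoidal neck. The catenoid itself has vanishing Euclidean mean curvature, so the main contributions to $H[\apsol] - 2/r$ in the neck come from the metric correction of Lemma \ref{prop:approxexpansions} and the cut-off mismatch used to glue to the logarithmic sphere profile; the symmetric parts of both vanish against the odd $I_k$. The only surviving leading contribution is the asymmetry introduced by the displacement $\delta_k$ in the interpolating profile \eqref{eqn:interpfn}: differentiating $\tilde F^\pm$ in $\delta_k$ and using the graph mean-curvature expansion of Section \ref{sec:euclideancalcs} yields a contribution of pointwise size $\mathcal O(\eps^{-1} \delta_k)$ supported on a transition annulus of area $\mathcal O(r^2 \eps^{3/2})$, which paired with $I_k \sim \pm 1$ produces exactly $C_0 \delta_k r \eps^{3/2}$.

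The hardest part will be pinning down the error terms sharply enough. The quadratic remainder $\mathcal Q(rf)$ and error $\mathcal H(rf)$ are handled via Propositions \ref{prop:nonlinest}--\ref{prop:nonlinesttwo} together with the bound $|f|_{\ctan} \leq C r^{-1} R(r,\eps,\delta)$ and the $L^1$ size of $\Phi_k \, \dif\vol_g$ on each spherical or neck component, which deliver the claimed $\mathcal O(r^\nu R(r,\eps,\delta) \eps^{2})$ and $\mathcal O(r^2 R(r,\eps,\delta))$ remainders. The delicate point is to verify that the higher-order curvature corrections in \eqref{eqn:firstcurvexp}--\eqref{eqn:secondcurvexp} and the cut-off commutators $[\mathcal L, \chi]$ in the transition annuli are genuinely subleading: on the neck this relies on $\|Y\| = \mathcal O(r\eps|\log\eps|)$, and on the sphere on using the axial symmetry assumed of the metric to kill all residual odd-in-$J_k$ integrals coming from the ambient curvature, so that only the scalar-curvature gradient survives at order $r^4$.
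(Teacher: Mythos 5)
Your high-level decomposition $H[\Sigma_f]-\tfrac2r=(H[\apsol]-\tfrac2r)+\mathcal L(rf)+\mathcal Q(rf)+\mathcal H(rf)$ and your treatment of the ambient-curvature term (Ye-type computation giving $-C_2r^4\dot S(p_k)$) and of $\mathcal Q,\mathcal H$ as errors are consistent with the paper. But the mechanism you propose for the leading neck-size terms is wrong, and this is the heart of the proposition. You claim that $\pm r\big(C_1\eps_{k\pm1}+C_1'\eps_{k\pm1}^{3/2}\big)$ arises from integrating $\mathcal L(rf)\,\chi^{\tau_4}_{\mathit{ext},k}J_k$ by parts, with boundary data coming from ``$rf$ matching the logarithmic profile $r\eps_k^\pm(c_k^\pm+C_k^\pm\log\|x\|)$.'' That profile is not carried by $f$: it is built into the approximate surface itself through the Green's functions $G_k$ of Step 1 of Section \ref{sec:approxsol}, while $f=f_r(\sigma,\delta)$ is the fixed-point correction of Proposition \ref{prop:soluptocoker}, and its contribution $\int\mathcal L(rf)\,\Phi_k\,\dif\vol_g$ is precisely one of the $\mathcal O(r^2R(r,\eps,\delta))$ remainders, not the source of the $\mathcal O(r\eps)$ leading term. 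The leading term must come from $\int(H[\apsol]-\tfrac2r)\Phi_k$, and since $\mathring{\mathcal L}_S(G_k)=0$ away from the poles this term is invisible to a naive pointwise expansion on $\tilde S_k$; the paper extracts it by interpreting the integral as a first-variation/flux (balancing) identity: it caps the region $\mathcal V_k$ with two disks $c_1,c_2$ across the necks and applies the Euclidean first variation formula, so that the bulk integral becomes the boundary fluxes $\int_{\partial c_s}\mathring g(X,\mathring\nu_s)\,\dif L_0+\tfrac2r\int_{c_s}\mathring g(X,\tfrac{\partial}{\partial t})\,\dif\vol_0$, which are exactly $r(C_1\eps_s+C_1'\eps_s^{3/2})$ because the flux of a catenoidal neck measures its waist. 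Your outline contains no substitute for this step (or for the equivalent distributional bookkeeping against $G_k$ together with control of the glued transition regions), so as written it cannot produce \eqref{eqn:sphbalformula}.

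The neck identity \eqref{eqn:neckbalformula} has the same problem in a different guise. The paper's leading term $C_0\delta_k r\eps^{3/2}$ comes from the exact identity $\int\mathring H[\Sigma]\,I_k\,\dif\vol_0=0$ on the minimal catenoidal piece together with the oddness of $I_k$ about the \emph{undisplaced} neck, leaving $-\tfrac2r\int I_k\,\dif\vol_0$ over the $\delta_k$-displaced neck. Your proposed source -- the cut-off mismatch $\eps_k\delta_k\mathring\Delta\eta$ in the transition annulus -- does not even give the right order: a pointwise size $\mathcal O(\eps^{-1}\delta_k)$ over an annulus of area $\mathcal O(r^2\eps^{3/2})$ yields $\mathcal O(r^2\delta_k\eps^{1/2})$, not $C_0\delta_k r\eps^{3/2}$, so the asserted match is unverified and in fact inconsistent. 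In short, the missing idea is the flux/first-variation structure (formula \eqref{eqn:balancingone}/\eqref{eqn:balancingtwo} localized to each spherical and neck region), which is what converts the geometry of the approximate solution -- neck sizes, displacement parameters, and $\nabla S$ -- into the stated leading terms.
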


\begin{proof}
	The formula \eqref{eqn:sphbalformula} will be derived first.  Consider the subset $\mathcal V_k$ of $\Sigma_0$ consisting of the $k^{\mathit{th}}$ spherical region of $\Sigma := \apsol$ and its adjoining transition regions.  Let $X = \frac{\partial}{\partial t}$ and $\tilde X = \chi_{\mathit{ext}, k}^{\tau_{4}} X$ using the normal coordinate system centered at $p_k$ used in the definition of $\Sigma \cap \mathcal V_k$.  Furthermore, define the domain $\mathcal W_k$ in $M$ by requiring $\partial \mathcal W_k =  \mathcal V_k \cup c_1 \cup c_2$ where $c_1$ and $c_2$ are small embedded two-dimensional disks with boundaries $\partial c_1$ and $\partial c_2$ contained in $t=\mathit{constant}$ planes with $\mathring g$-conormal vectors $\mathring \nu_1$ and $\mathring \nu_2$ tangent to $\mathcal V_k$.

Now let $\Sigma'$ be either $\Sigma_f \cap \mathcal V_k$ or $\Sigma \cap \mathcal V_k$; let $X'$ be a vector field supported on this surface; and let $g'$ be any choice of background metric.  Define $I(\Sigma', X', g')$ to be the integral in \eqref{eqn:sphbalformula} except with $\Sigma_f$ replaced by $\Sigma'$ and $\tilde J_k$ replaced by $g'(X', N')$ where $N'$ is the $g'$-unit normal vector field of $\Sigma'$ and the mean curvature and volume form calculated from $g'$.  It is now simple to phrase the means by which the formula \eqref{eqn:sphbalformula} will be found.  First, $I(\Sigma_f, \tilde X, g)$ can be expressed as
\begin{equation}
	\label{eqn:balmapzero}
	\begin{aligned}
		I(\Sigma_f, \tilde X, g) &= I ( \Sigma, X, \mathring g) + \big(  I ( \Sigma, X, g)  -  I ( \Sigma, X, \mathring g) \big) \\
		&\qquad + \big(  I ( \Sigma_f, \tilde X,  g)  -  I ( \Sigma, \tilde X, g)  \big)+ \big( I ( \Sigma, \tilde X, g)  -  I ( \Sigma, X, g)  \big)  \, .
	\end{aligned}
\end{equation}
Then one can apply the first variation formula in Euclidean space to the first term, yielding a pair of boundary integrals; one can apply the expansions for the mean curvature with respect to the perturbed background metric from Lemma \ref{prop:expansions} to the second term, yielding a curvature quantity; and one can treat the third and fourth terms as small errors.

The details of the computation outlined above are as follows.  For the first term, the classical first variation formula for a surface with boundary in Euclidean space gives
\begin{align*}
	\int_{\Sigma \cap \mathcal V_k} \big( \mathring H[\Sigma] - \tfrac{2}{r} \big) J_k \, \dif \vol_0 &= \sum_{s=1}^2 (-1)^s \left( \int_{\partial c_s}  \mathring g(X, \mathring \nu_s) \dif \mathrm{L}_0 + \tfrac{2}{r} \int_{c_s} \mathring g( X, \tfrac{\partial}{\partial t}) \dif \vol_0 \right) \\
	&= \sum_{s=1}^2 (-1)^s \, r  \big(C_1 \eps_s + C_1' \eps_s^{3/2} \big)
\end{align*}
by direct computation, where $\eps_s$ is the neck scale parameter associated to the neck adjoining the curve $\partial c_s$, while $C_1$ and $C_1'$ are constants independent of $r$, $\eps$ and $\delta$.  For the second term, the expansion in equation \ref{eqn:geomcexp} implies
\begin{equation}
	\label{eqn:balmapone}
	\begin{aligned}
		I ( \Sigma, X, g)  -  I ( \Sigma, X, \mathring g) &= \int_{\Sigma \cap \mathcal V_k} \mathring g(X, \mathring N) \Big( 
		\big( \tfrac{1}{6} \ric( Y, Y) + \tfrac{1}{12} \bar \nabla_Y \ric(  Y,  Y) \big) \big(\mathring H - \tfrac{2}{r} \big) \\[-0.5ex]
		& \qquad \qquad
		- \big( \tfrac{1}{3} \riem(E_i, Y, E_j,Y) + \tfrac{1}{6} \bar\nabla_Y \riem(E_i, Y, E_j, Y) \big) \mathring B^{ij}  \\
		& \qquad \qquad
		- \tfrac{2}{3} \ric( \mathring N, Y) - \tfrac{1}{2} \bar \nabla_Y \ric( \mathring N, Y) \\
		&\qquad \qquad +  \tfrac{1}{12}  \bar\nabla_{\mathring N} \ric( Y, Y) -  \tfrac{1}{6}  \bar\nabla_{\mathring N} \riem(\mathring N, Y, \mathring N, Y) \Big) \dif \vol_0
		+ \mathcal O(r^5)
	\end{aligned}
\end{equation}
where $Y$ is the position vector field of $\Sigma_k$, while the quantities $g, N, H, B$ and $\mathring g, \mathring N, \mathring H, \mathring B$ have their usual meanings.    Since $\mathcal V_k$ is the normal graph of the function $r G$ over the sphere $S_k$ as in Section \ref{sec:approxsol}, one can replace the integral in \eqref{eqn:balmapone} with an integral over $S_k$, at the expense of an error of size $\mathcal O(\eps | \log (\eps) | r^3)$.  Hence by direct computation using $\mathring H = \frac{2}{r}$ and $\mathring B_{ij} = r \mathring h_{ij}$ one finds
$$I ( \Sigma, X, g)  -  I ( \Sigma, X, \mathring g) = -C_2 r^4 \dot S(p_k) + \mathcal O(\eps | \log (\eps) |  r^3)$$
where $C_2>0$  is a constant independent of $r$, $\eps$ and $\delta$.  An similar computation is performed in \cite{ye}.

It remains to estimate the error terms appearing in \eqref{eqn:balmapzero}.  In the third term, the fact that $H[\Sigma_f] = H[\Sigma] + \mathcal L(rf) + \mathcal Q(rf)$ must be used.  Thus 
$$\big|  I ( \Sigma_f, \tilde X,  g)  -  I ( \Sigma, \tilde X, g) \big| \leq C r^2 |f|_{C^{2, \alpha}_0 (\Sigma \cap \mathcal V_k)} \leq C r^2 R(r, \eps, \delta) $$
using the estimate of $f$ from Proposition \ref{prop:soluptocoker}.  In the fourth term, observe that $\tilde X - X$ is supported in a collar of width $\mathcal O(r\eps^{3/4})$ around the transition regions of $\Sigma \cap \mathcal V_k$.  Hence 
$$ \big| I ( \Sigma, \tilde X, g)  -  I ( \Sigma, X, g)  \big| \leq C r^2 \eps^{3/2} $$
using the estimate from Step 3 of Proposition \ref{prop:meancurvest} for the mean curvature in the transition region.

The derivation of the formula \eqref{eqn:neckbalformula} is similar to what has been done above.  That is, writing $I(\Sigma', X', g')$ as before, but with $X'$ equal to either $X := \frac{\partial }{\partial t}$ or $\tilde X := \chi_{\mathit{neck}, k}^{\tau_{1}} \frac{\partial}{\partial t}$, one finds the same decomposition as \eqref{eqn:balmapzero} for $I(\Sigma_f, \tilde X, g)$.  But now,
\begin{align*}
	\int_{\Sigma \cap \mathcal N^{\tau_{1}}_k} \big( \mathring H[\Sigma] - \tfrac{2}{r} \big) I_k \, \dif \vol_0 &= - \frac{2}{r} \int_{\Sigma \cap \mathcal N^{\tau_{1}}_k} I_k \, \dif \vol_0 = C \delta_k r  \eps_k^{3/2}
\end{align*}
where $\delta_k$ is the displacement parameter of the neck $\mathcal N_k$ and $C$ is a constant independent of $r$, $\eps$ and $\delta$.  This is because $\int_{\Sigma \cap \mathcal N^{\tau_{1}}_k} \mathring H[\Sigma] \, I_k \, \dif \vol_0 = 0$ exactly (this is the first variation formula for the exactly minimal surface $\Sigma \, \cap \, \mathrm{supp}( \chi_{\mathit{neck}, k}^{\tau_{1}})$) and $I_k$ is an odd function with respect to the neck having $\delta_k = 0$, whereas the integral is being taken over the neck with $\delta_k \neq 0$.  The remaining terms in the expansion of $I(\Sigma_f, \tilde X, g)$ are small error terms whose estimates are sufficiently similar to the analogous ones above and will not be repeated for the sake of brevity. 
\end{proof}

\subsection{Proof of the Main Theorem}

The formul\ae\ developed for the balancing map $B_r : \R^{d} \rightarrow \R^{d}$ in the previous section make it possible to choose an exactly CMC surface from amongst the family of surfaces $\mu_{rf_r(\sigma, \delta)} \big( \apsol \big)$.  This will be done as follows.  First, because of Lemma \ref{lemma:projop}, it is sufficient to find $(\sigma, \delta)$ so that the right hand sides of equations \eqref{eqn:neckbalformula} and \eqref{eqn:sphbalformula} vanish for every $k$.   One should realize that $p_k$ in these equations can be expressed in terms of $\eps_1, \eps_2, \ldots$ via the formula $p_k := \gamma( 2 k r + \sum_{l=1}^k \eps_l)$ and $\eps_k$ can be expressed in terms of $\sigma_k$ via the formula $\sigma_k := \Lambda_k(\eps_k)$ as in Section \ref{sec:approxsol}.  (In the case of the one-ended surface, let the relationship $\eps_K = \rho_T(0)$ for $T = 2 + \sigma_K/r$ satisfied by the Delaunay end of $\apsoloe$ be written $\eps_K := \Lambda_K(\sigma_K)$ for consistency.)   Note that $\sigma_k$ and $p_k$ are smooth functions of $\eps_k$.   Finding the appropriate value $(\sigma,\delta)$ will amount to applying the implicit function theorem for smooth maps between finite dimensional spaces to this system of equations, and will lead to a unique solution $(\sigma, \delta ) := (\sigma(r), \delta(r))$ for all sufficiently small $r>0$ and $\eps$, $\delta$ satisfying $r^3 < \eps < r^2$ and $\delta < \eps^{1/2}$.

\paragraph*{The finite-length surface.}  The equations that must be solved to produce the finite-length CMC surface are as follows: if the various error quantities appearing in equations \eqref{eqn:balform}, divided by $r$, are denoted $E_{s,k}(r, \eps)$ where $s = 1$ refers to a neck and $s=2$ refers to a sphere, then
\begin{subequations}
\label{eqn:neckchoice}
\begin{equation}
	\label{eqn:neckchoiceone}
	\begin{aligned}
		0 &= \delta_1 + \eps_1^{-3/2} E_{1,1}(r, \eps, \delta) \\
		& \; \; \vdots \\
		0 &= \delta_{K-1} + \eps_K^{-3/2}  E_{1,K-1}(r, \eps, \delta)
\end{aligned}
\end{equation}
as well as
\begin{equation}
	\label{eqn:neckchoicetwo}
	\begin{aligned}
		0 &= - q_1(\eps_{1}) + q_{2}(\eps_{2}) - C_{2, 1} r^{3} \dot S (p_1) + E_{2,1}(r , \eps , \delta) \\
		& \; \; \vdots \\
		0 &= - q_k(\eps_{k}) + q_{k+1}(\eps_{k+1}) - C_{2, k} r^{3} \dot S (p_k) + E_{2,k}(r , \eps , \delta) \\
		& \; \; \vdots \\
		0 &= -  q_K(\eps_{K})  - C_{2, K} r^{3} \dot S (p_K) + E_{2,K}(r , \eps , \delta)
	\end{aligned}
\end{equation}
\end{subequations}
where $q_k(\eps) := C_{1, k} \eps + C_{1, k}' \eps^{3/2}$ and $p_k := \gamma( 2 k r + \sum_{l=1}^k \eps_l)$ while $C_{1, k}, C_{1, k}'$ and $C_{2, k}$ are various constants independent of $r$, $\eps$ and $\delta$.  Note that the $E_{s, k}$ are smooth functions of $\eps$.  Also, because the $t \mapsto - t$ symmetry that has been imposed since the beginning, the scalar curvature must have a critical point at $p_0$.

One should now view the equations in \eqref{eqn:neckchoice} as a systems of equations for the $\eps$ and $\delta$ variables depending on the parameter $r$ that is to be treated using the implicit function theorem.  When $r= 0$ there is an exact solution $\delta_1 = \cdots = \delta_{K-1} = 0$ and $\eps_1 = \cdots = \eps_K = 0$.  Furthermore, it is easy to see that the derivative matrix of the function $\Phi (\eps, \delta, r)$ defined by the right hand sides of \eqref{eqn:neckchoiceone} and \eqref{eqn:neckchoicetwo} in the $\eps$ and $\delta$ variables is invertible at $r=0$ with a lower bound of size $\mathcal O(1)$ on its determinant (the derivative matrix is upper-triangular with non-zero constants of size $\mathcal O(1)$ on the diagonal).   Hence by the inverse function theorem there is a solution of \eqref{eqn:neckchoice} for all sufficiently small $r$, and the dependence of $\eps$ and $\delta$ on $r$ is smooth.   Note that the solution for small $r$ will have $\eps_k = \mathcal O( r^3 \sum_{k'=0}^k S(p_{k'}))$ and hence $C_1 r^3 \leq \eps_k \leq C_2 r^2$ for numerical constants $C_1$ and $C_2$.  This is because the sum  $ r \sum_{k'=0}^k \dot S(p_{k'}))$ approximates a Riemann sum for the integral of $S$ along $\gamma$ from $t=0$ to $t=2 K r$ and a uniform bound on the oscillation of the scalar curvature of the ambient manifold has been assumed.  Furthermore, it is also the case that $\delta_k < \eps^{1/2}$ for small $r$ simply by examining the dependence of the $E(r, \eps, \delta)$ quantities on its arguments.  This completes the construction of the finite-length CMC surface. \hfill \qedsymbol

\paragraph*{The one-ended surface.} The equations that must be solved to produce the one-ended CMC surface are slightly different.  Using the same notation as above, these equations are
\begin{subequations}
\label{eqn:neckchoiceoe}
\begin{equation}
	\label{eqn:neckchoiceoneoe}
	\begin{aligned}
		0 &= \delta_1 + \eps_1^{-3/2} E_{1,1}(r, \eps, \delta) \\
		& \; \; \vdots \\
		0 &= \delta_{K-1} + \eps_K^{-3/2}  E_{1,K-1}(r, \eps, \delta)
\end{aligned}
\end{equation}
as well as
\begin{equation}
	\label{eqn:neckchoicetwooe}
	\begin{aligned}
		0 &= q_0(\eps_0) - C_{2, 0} r^3 \dot S(p_0) + E_{2, 0} (r, \eps, \delta) \\
		0 &= - q_0(\eps_{0}) + q_{1}(\eps_{1}) - C_{2, 1} r^{3} \dot S (p_1) + E_{2,1}(r , \eps , \delta) \\
		& \; \; \vdots \\
		0 &= - q_{K-1}(\eps_{k}) + q_{K}(\eps_{K}) - C_{2, K} r^{3} \dot S (p_K) + E_{2,K}(r , \eps , \delta) \, .
	\end{aligned}
\end{equation}
\end{subequations}

One should again view the equations in \eqref{eqn:neckchoiceoe} as a systems of equations for the $\eps$ and $\delta$ variables to be treated using the implicit function theorem, but this time depending on the parameters $r$ and the point $p_0$.  When $r= 0$ and $p_0$ is any point on $\gamma$, there is an exact solution $\delta_1 = \cdots = \delta_{K-1} = 0$ and $\eps_1 = \cdots = \eps_K = 0$.  Furthermore, the derivative matrix of the function of $(\eps, \delta, r)$ defined by the right hand sides of \eqref{eqn:neckchoiceoe} in the $(\eps,\delta)$ variables is invertible at $r=0$ with a lower bound of size $\mathcal O(1)$ on its determinant.   Hence by the inverse function theorem there is a solution of \eqref{eqn:neckchoice} for all sufficiently small $r$, the dependence of $\eps$ and $\delta$ on $r$ is smooth, and the dependence of the solution on $r$ is the same as before.  This completes the construction of the one-ended CMC surface. \hfill \qedsymbol

\bigskip

\renewcommand{\baselinestretch}{1}
\small

\bibliography{sc}
\bibliographystyle{amsplain}

\end{document}